\algnewcommand\algorithmicreturn{\textbf{return}}
\algnewcommand\RETURN{\State \algorithmicreturn}%
\pgfplotsset{
  tick label style={font=\footnotesize},
  label style={font=\footnotesize},
  legend style={font=\footnotesize}
}
\newtheorem{example}[theorem]{Example}
\newtheorem{remark}[theorem]{Remark}
\newcommand{\BBa}{\tau_{\text{BB}2}}
\newcommand{\BBb}{\tau_{\text{BB}1}}
\newcommand{\<}{\langle}  %  useful!!!
\renewcommand{\>}{\rangle}
\newcommand{\vx}{{x}}
\newcommand{\RR}{\mathbb{R}}
\newcommand{\R}{\mathbb{R}}
\newcommand{\NN}{\mathbb{N}}
\newcommand{\f}{h}
\newcommand{\lfun}{\mathcal{L}}
\newcommand{\jacl}{J\lfun}
\newcommand{\genjacl}{\mathcal{G}}
\newcommand{\be}{\begin{eqnarray}}
\newcommand{\ee}{\end{eqnarray}}
\renewcommand{\P}{\mathsf{P}}
\newcommand{\C}{\mathcal{C}}
\renewcommand{\int}{\mathrm{int}}
\newcommand{\conv}{\mathrm{conv}}
\newcommand{\norm}[1]{\left\|#1\right\|}
\newcommand{\anorm}[1]{\big\|#1\big\|}
\newcommand{\pds}[2]{\left\<#1,#2\right\>} \newcommand{\scal}[2]{\pds{#1}{#2}}
\newcommand{\abs}[1]{\left|#1\right|}
\newcommand{\opnorm}[1]{\big|\!\big|\!\big|#1\big|\!\big|\!\big|}
\newcommand{\veps}{{\varepsilon}}
\newcommand{\parenth}[1]{\left({#1}\right)}
\newcommand{\bpa}[1]{\big({#1}\big)}
\newcommand{\acc}[1]{\left\{{#1}\right\}}
\newcommand{\setcond}[2]{\acc{#1:~ #2}}
\newcommand{\Hm}{\mathcal{H}}
\newcommand{\beqn}{\begin{eqnarray}}
\newcommand{\eeqn}{\end{eqnarray}}
\renewcommand{\ge}{\geqslant}
\renewcommand{\leq}{\leqslant}
\newcommand{\Id}{\mathrm{Id}}
\renewcommand{\iff}{\Leftrightarrow}
\newcommand{\eqdef}{:=}
\newcommand{\sdp}{\mathbb{S}_{++}}
\newcommand{\eg}{\textit{e.g.}~}
\newcommand{\ie}{\textit{i.e.}~}
\newcommand*{\etc}{%
    \@ifnextchar{.}%
    {etc}%
    {etc.\@\xspace}%
}
\newcommand{\xs}{x^\star}
\newcommand{\xk}{x_k}
\newcommand{\xkk}{x_{k+1}}
\newcommand{\step}{\kappa}
\newcommand{\stk}{\step_k}
\newcommand{\dk}{D_k}
\newcommand{\ek}{E_k}
\newcommand{\ekk}{E_{k+1}}
\newcommand{\delk}{\Delta_k}
\newcommand{\cnd}{c}
\DeclareMathOperator{\prox}{prox}
\DeclareMathOperator{\proj}{proj}
\newcommand{\infc}{\stackrel{\mathrm{+}}{\vee}}
\newcommand{\indic}{\iota}
\newcommand{\env}[2]{{}^{#2\!}{#1}}
\DeclareMathOperator{\dom}{dom}
\DeclareMathOperator{\Span}{Im}
\DeclareMathOperator{\Ker}{Ker}
\DeclareMathOperator{\ri}{ri}
\DeclareMathOperator*{\argmin}{argmin}
\DeclareMathOperator*{\Argmin}{Argmin}
\DeclareMathAlphabet{\mathbbb}{U}{bbold}{m}{n}
\newcommand{\ones}{{\mathbf 1}}                % ones vector 
\newcommand{\JF}[1]{{{#1}}}
\newcommand{\SB}[1]{{{#1}}}
\newcommand{\PO}[1]{{{#1}}}
\newcommand{\POc}[2]{\colorlet{thisColorPeter}{.}#1#2\color{thisColorPeter}}
\newif\ifcompilePGFfigs
\title{On Quasi-Newton Forward--Backward Splitting: \\ Proximal Calculus and Convergence}
\author{Stephen Becker\thanks{Applied Mathematics, University of Colorado Boulder (\tt{stephen.becker@colorado.edu}).}
\and
Jalal Fadili\thanks{Normandie Univ, ENSICAEN, CNRS, GREYC, France (\tt{Jalal.Fadili@greyc.ensicaen.fr}).} 
\and
Peter Ochs\thanks{Saarland University, Saarbr\"ucken, Germany (\tt{ochs@math.uni-sb.de}).}
}
\date{\today}
\begin{document}

\maketitle

\begin{abstract}
We introduce a framework for quasi-Newton forward--backward splitting algorithms (proximal quasi-Newton methods) with a metric induced by diagonal $\pm$ rank-$r$ symmetric positive definite matrices. 
%Unlike general proximal quasi-Newton methods, the
This special type of metric allows for a highly efficient evaluation of the proximal mapping. The key to this efficiency is a general proximal calculus in the new metric. By using duality, formulas are derived that relate the proximal mapping in a rank-$r$ modified metric to the original metric. We also describe efficient implementations of the proximity calculation for a large class of functions; the implementations exploit the piece-wise linear nature of the dual problem. Then, we apply these results to acceleration of composite convex minimization problems, which leads to elegant quasi-Newton methods for which we prove convergence. 
The algorithm is tested on several numerical examples and compared to a comprehensive list of alternatives in the literature. Our quasi-Newton splitting algorithm with the prescribed metric compares favorably against state-of-the-art. The algorithm has extensive applications including signal processing, sparse recovery, machine learning and classification to name a few.
\end{abstract}

% REQUIRED
\begin{keywords}
forward-backward splitting, quasi-Newton, proximal calculus, duality.
\end{keywords}

% REQUIRED
\begin{AMS}
65K05, 65K10, 90C25, 90C31.
\end{AMS}

%%%%%%%%%%%%%%%%%%%%%%%%%%%%%%%%%%%%%%%%%%%%%%%%%%%%%%%%%%%%%%%%%%%%%%%%%%%%%%%%
%% Introduction %%%%%%%%%%%%%%%%%%%%%%%%%%%%%%%%%%%%%%%%%%%%%%%%%%%%%%%%%%%%%%%%
%%%%%%%%%%%%%%%%%%%%%%%%%%%%%%%%%%%%%%%%%%%%%%%%%%%%%%%%%%%%%%%%%%%%%%%%%%%%%%%%

\section{Introduction}
\label{sec:intro}
%%% general intro / need for new algorithms for large scale problems
Convex optimization has proved to be extremely useful to all quantitative disciplines  of science. A common trend in modern science is the increase in size of datasets, which drives the need for more efficient optimization schemes. For large-scale unconstrained smooth convex problems, two classes of methods have seen the most success: limited memory quasi-Newton methods and non-linear conjugate gradient (CG) methods. Both of these methods generally outperform simpler methods, such as gradient descent. However, many problems in applications have constraints or should be modeled naturally as non-smooth optimization problems. 

%%% GD can be generalized to structure non-smooth problems / mention importance of simple prox
A problem structure that is sufficiently broad to cover many applications in machine learning, signal processing, image processing, computer vision (and many others) is the minimization of the sum of two convex function, one being smooth and the other being non-smooth and ``simple'' in a certain way. The gradient descent method has a natural extension to these structured non-smooth optimization problems, which is known as \emph{proximal gradient descent} (which includes projected gradient descent as a sub-case) or \emph{forward--backward splitting} \cite{BauschkeCombettes11}. Algorithmically, besides a gradient step with respect to the smooth term of the objective, the generalization requires to solve proximal subproblems with respect to the non-smooth term of the objective. The property ``simple'' from above refers the proximal subproblems. In many situations, these subproblems can be solved analytically or very efficiently. However, a change of the metric, which is the key feature of quasi-Newton methods or non-linear CG, often leads to computationally hard subproblems.

%%% 
While the convergence of proximal quasi-Newton methods has been analyzed to some extent in the context of variable metric proximal gradient methods, little attention is paid to the efficient evaluation of the subproblems in the new metric. In this paper, we emphasize the fact that quasi-Newton methods construct a metric with a special structure: the metric is successively updated using low rank matrices. We develop efficient calculus rules for a general rank-$r$ modified metric. This allows \JF{popular} quasi-Newton methods, such as the SR1 (symmetric rank-1) and the L-BFGS methods, to be efficiently applied to structured non-smooth problems. The SR1 method pursues a rank-1 update of the metric and the L-BFGS method uses a rank-2 update. 

We consider the results in this paper as a large step toward the applicability of quasi-Newton methods with a comparable efficiency for smooth and structured non-smooth optimization problems.

%%%%%%%%%%%%%%%%%%%%%%%%%%%%%%%%%%%%%%%%%%%%%%%%%%%%
\subsection{Problem statement}
\label{sec:statement}
Let $\Hm=(\RR^N,\pds{\cdot}{\cdot})$ equipped with the usual Euclidean scalar product $\pds{x}{y}=\sum_{i=1}^N x_iy_i$ and associated norm $\norm{x}=\sqrt{\pds{x}{x}}$. For a matrix $V \in \RR^{N \times N}$ in the symmetric positive-definite (\SB{SPD}) cone $\sdp(N)$, we define $\Hm_V=(\RR^N,\pds{\cdot}{\cdot}_V)$ with the scalar product $\pds{x}{y}_V = \pds{x}{Vy}$ and norm $\norm{x}_V$ corresponding to the metric induced by $V$. The dual space of $\Hm_V$, under $\pds{\cdot}{\cdot}$, is $\Hm_{V^{-1}}$. We denote \SB{the identity operator as $\Id$}. \JF{For a matrix $A$, $A^+$ is its Moore-Penrose pseudo-inverse. For a positive semi-definite matrix $A$, $A^{1/2}$ denotes its principal square root.}

%\SB{Shouldn't we call $f$ ``extended valued'' not real-valued?}
An extended-valued function $f: \Hm \to \RR \cup \acc{+\infty}$ is (0)-\emph{coercive} if $\lim_{\norm{\vx} \to +\infty}f\parenth{\vx}=+\infty$. The \emph{domain} of $f$ is defined by $\dom f = \{ x\in\Hm\ :\ f(x) < +\infty \}$ and $f$ is \emph{proper} if $\dom f \neq
\emptyset$. We say that a real-valued function $f$ is \emph{lower semi-continuous} (lsc) if $\liminf_{x \to x_0} f(x) \geq f(x_0)$. The class of all proper lsc convex functions from $\Hm$ to $\RR \cup \acc{+\infty}$ is denoted by $\Gamma_0(\Hm)$. The conjugate or Legendre-Fenchel transform of $f$ on $\Hm$ is denoted $f^*$.

Our goal is the generic minimization of functions of the form 
\begin{equation}\tag{$\P$}
\label{eq:minP}
\min_{x \in \Hm} ~ \{F(x) \eqdef f(x) + h(x)\}~,
%\min_{x \in \Hm} ~ \{f(x) \eqdef \sum_{i=1}^n f_i(x)\}~,
\end{equation}
where $f,h \in \Gamma_0(\Hm)$. % are functionals from $\RR^n \rightarrow \RR \cup \{+\infty\}$. 
We also assume the set of minimizers $\Argmin(F)$ is nonempty. Write $\xs$ to denote an element of $\Argmin(F)$. We assume that 
%$f \in C^{1,1}(\RR^N)$,
$f \in C^{1,1}(\SB{\Hm})$,
 meaning that it is continuously differentiable and its gradient (in $\Hm$) is $L$-Lipschitz continuous.

The class we consider covers structured smooth+non-smooth convex optimization problems, including those with convex constraints.
Here are some examples in regression, machine learning and classification.
\begin{example}[LASSO] \label{ex:intro-example-lasso} 
%  Let $A$ be a matrix, $b$ a vector, and $\lambda>0$ of appropriate dimensions.
  Let $A$ be a matrix, \SB{$\lambda>0$, and $b$ a vector} of appropriate dimensions.
    \begin{equation}
        \min_{x \in \Hm} \frac{1}{2}\|Ax-b\|_2^2 + \lambda \|x\|_1 ~.
        \label{eq:LASSO}
    \end{equation}
\end{example}
\begin{example}[Non-negative least-squares (NNLS)]
  Let $A$ and $b$ be as in Example~\ref{ex:intro-example-lasso}.
        \begin{equation}
            \min_{x \in \Hm} \frac{1}{2}\|Ax-b\|_2^2 \quad\text{subject to}\quad x \ge 0 ~.
        \label{eq:NNLS}
    \end{equation}
\end{example}
\begin{example}[Sparse Support Vector Machines]
One would like to find a linear decision function which minimizes the objective
    \begin{equation}
        \min_{x \in \Hm, b \in \RR} \frac{1}{m}\sum_{i=1}^m \mathscr L(\pds{x}{z_i}+b,y_i) + \lambda \|x\|_1
        %\min_{x \in \Hm} \frac{1}{m}\sum_{i=1}^m L(\pds{x}{z_i}+b,y_i) + \lambda \|x\|_2^2
        \label{eq:hingesvm}
    \end{equation}
%\SB{shouldn't we included $b$ in the minimization? Or, wlog, let one coordinate of $z_i$ be constant, and absorb $b$ into $x$}    
where for 
$i=1,\cdots,m$, $(z_i,y_i) \in \SB{\Hm} \times \{\pm 1\}$
%$i=1,\cdots,m$, $(z_i,y_i) \in \RR^N \times \{\pm 1\}$
is the training set, and $\mathscr L$ is a smooth loss function with Lipschitz-continuous gradient such as the squared hinge loss $\mathscr L(\hat{y}_i,y_i)=\max(0,1-\hat{y}_iy_i)^2$ or the logistic loss $\mathscr L(\hat{y}_i,y_i)=\log(1+e^{-\hat{y}_iy_i})$. The term $\lambda\|x\|_1$ promotes sparsity of the decisive features steered by a parameter $\lambda>0$.
\end{example}

\subsection{Contributions} 

We introduce an general proximal calculus in a metric $V=P\pm Q\in \sdp(N)$ given by $P\in\sdp(N)$ and a positive semi-definite rank-$r$ matrix $Q$. This significantly extends the result in the preliminary version of this paper \cite{BF12}, where only $V=P+Q$ with a rank-$1$ matrix $Q$ is addressed. The general calculus is accompanied by several more concrete examples (see Section~\ref{sec:examples} for a non-exhaustive list), where, for example, the piecewise linear nature of certain dual problems is rigorously exploited. 

Motivated by the discrepancy between constrained and unconstrained performance, we define a class of limited-memory quasi-Newton methods to solve \eqref{eq:minP} \SB{which} extends naturally and elegantly from the unconstrained to the constrained case. In particular, we generalize the zero-memory SR1 and L-BFGS quasi-Newton methods to the proximal quasi-Newton setting for solving \eqref{eq:minP}, and prove their convergence. Where L-BFGS-B~\cite{LBFGSB} is only applicable to box constraints, our quasi-Newton methods efficiently apply to a wide-variety of non-smooth functions.

\SB{To clarify the differences between this paper and the conference paper \cite{BF12}, the current paper (1) extends the proximal framework to allow $V=P\pm Q$ scalings where $Q$ is rank $r\ge 1$ (Theorem~\ref{theo:proxVrankr}, and specialized to the $r=1$ case in Theorem~\ref{theo:proxVrank1}), using Toland duality to handle non-convexity issues that arise in the $P-Q$ case, whereas \cite{BF12} considers only $V=P+Q$ for $Q$ rank-1 and positive semi-definite; (2) discusses at length bisection and semi-smooth methods to solve the dual problem, and gives global (Proposition~\ref{prop:bisectionConverges}) and local (Proposition~\ref{prop:generaldh-rank-r}) convergence results, respectively; (3) introduces the zero-memory L-BFGS quasi-Newton forward-backward algorithm (Algorithm~\ref{alg:LBFGS}) in addition to the SR1 one; (4) proves convergence results for these algorithms (Theorems~\ref{thm:convergence} and \ref{thm:convergence-bfgs}, respectively); and (5) discusses a few new examples of non-separable proximity operator including that of the $\ell_1-\ell_2$ norm in Section~\ref{sec:examples} and runs numerical experiments with this norm in Section~\ref{sec:num-exp-l1l2-lasso}.}

\subsection{Paper organization}

Section~\ref{sec:quasi-Newton-FBS} formally introduces quasi-Newton methods and their generalization to the structured non-smooth setting \eqref{eq:minP}. The related literature is extensively discussed. In order to obtain a clear perspective on how to apply the proximal calculus that is developed in Section~\ref{sec:prox}, the outline of our proposed zero-memory SR1 and our zero-memory BFGS quasi-Newton method is provided in Section~\ref{sec:quasi-Newton-FBS}. The main result that simplifies the rank-$r$ modfied proximal mapping is stated in Section~\ref{sec:rank-r-mod-prox}, followed by several specializations and an efficient semi-smooth Newton-based root finding strategy that is required in some situations. Section~\ref{sec:SR1} describes the details for the construction of the SR1 metric and states the convergence result. Following the same outline, the L-BFGS metric is constructed in Section~\ref{sec:BFGS} and convergence is proved. The significance of our results is confirmed in numerical experiments.

%%%%%%%%%%%%%%%%%%%%%%%%%%%%%%%%%%%%%%%%%%%%%%%%%%%%%%%%%%%%%%%%%%%%%%%%%%%%%%%%
%% Quasi-Newton forward--backward splitting %%%%%%%%%%%%%%%%%%%%%%%%%%%%%%%%%%%%
%%%%%%%%%%%%%%%%%%%%%%%%%%%%%%%%%%%%%%%%%%%%%%%%%%%%%%%%%%%%%%%%%%%%%%%%%%%%%%%%

\section{Quasi-Newton forward--backward splitting} \label{sec:quasi-Newton-FBS}

\subsection{The algorithm}

The main update step of our proposed algorithm for solving \eqref{eq:minP} is a forward--backward splitting (FBS) step in a special type of metric. In this section, we introduce the main algorithmic step and Section~\ref{sec:prox} shows that our choice of metric allows the update to be computed efficiently.

We define the following quadratic approximation to the smooth part $f$ of the objective function in \eqref{eq:minP} around the current iterate $x_k$
\begin{equation}\label{eq:Q}
    Q_\step^B(x; x_k) := f(x_k) + \pds{\nabla f(x_k)}{ x-x_k} + \frac{1}{2\step}\|x-x_k\|_{B}^2\,, %\quad \text{and}\quad Q_k(x) = Q_k^{B_k}(x),
\end{equation}
where $B \in \sdp(N)$ and $\step>0$. The (non-relaxed) version of the variable metric FBS algorithm (also known as proximal gradient descent) to solve \eqref{eq:minP} updates to a new iterate $x_{k+1}$ according to
\begin{equation} \label{eq:firstOrder}
    x_{k+1} = \argmin_{x \in \RR^N} Q_{\stk}^{B_k}(x;x_k) + h(x) =: \prox^{B_k}_{\stk h}( x_k - \stk B_k^{-1}\nabla f(x_k) ) 
\end{equation}
with (iteration dependent) step size $\stk$ and metric $B_k\in \sdp(N)$. The right hand side uses the so-called proximal mapping, which is formally introduced in Definition~\ref{def:prox}. Standard results (see, e.g., \cite{CV14,vu2013variable}) show that, for a sequence $(B_k)_{k \in \NN}$ that varies moderately (in the Loewner partial ordering sense) such that $\inf_{k \in \NN} \norm{B_k}=1$, convergence of \JF{the sequence} $(x_k)_{k \in \NN}$ is expected when $0 < \underline{\step} \leq \stk \leq \overline{\step} < 2/L$, \SB{where $L$ is the Lipschitz constant of $\nabla f$.}

Note that when $h=0$, \eqref{eq:firstOrder} reduces to gradient descent if $B_k=\Id$, which is a poor approximation and requires many iterations, but each step is cheap.
When $f$ is also $C^2(\RR^N)$, the Newton's choice $B_k=\nabla^2 f(x_k)$ is a more accurate approximation and reduces to Newton's method when $h=0$. The update step is well-defined \JF{(at least locally) if $\nabla^2 f(x^\star)$ is positive-definite}, but may be computationally demanding as it requires solving a linear system and possibly storing the Hessian matrix. Yet, because it is a more accurate approximation, Newton's method has local quadratic convergence under standard assumptions such as self-concordancy.
%Note that \eqref{eq:firstOrder} specializes to the gradient descent when $h=0$ and $B_k=\Id$. Therefore, if $f$ is a strictly convex quadratic function and one takes $B_k=\nabla^2 f(x_k)$, then we obtain the Newton method. 
%Let's get back to $h \neq 0$. It is now well known that fixed $B_k=\Id$ is usually a poor choice. 
%Since $f$ is smooth and can be approximated by a quadratic, inspired by quasi-Newton methods, this suggests picking $B_k$ as an approximation of the Hessian. Here we propose a diagonal $-$ rank 1 approximation.  
Motivated by the superiority of Newton and quasi-Newton methods over gradient descent for the case $h=0$, we pursue a quasi-Newton approximation for $B_k$ \JF{for the case $h \neq 0$}. However, the update is now much more involved than just solving a linear system. Indeed, one has to compute the proximal mapping in the metric $B_k$, which is, in general, as difficult as solving the original problem~\eqref{eq:minP}. For this reason, we restrict $B_k$ to \JF{the structured form of a positive-definite "simple" matrix (e.g., diagonal) plus or minus a low-rank term.}
%We expect that even when $h\neq 0$, since $f$ is smooth and can be approximated by a quadratic, a Newton or quasi-Newton  (Stephen will finish this sentence ... )
% SRB changed Jan 20 '18 (before, it sounded just a bit off)

The main steps of our general quasi-Newton forward--backward scheme to solve \eqref{eq:minP} are given in Algorithm~\ref{alg:framework}. Its instantiation for a diagonal $-$ rank 1 metric (0SR1) and a diagonal $-$ rank 2 metric (0BFGS) are respectively listed in Algorithm~\ref{alg:main} and Algorithm~\ref{alg:LBFGS}. Details for the selection of the corresponding metrics are provided in Section~\ref{sec:SR1} and~\ref{sec:BFGS}. Following the convention in the literature on quasi-Newton methods, throughout the paper, we use $B_k$ as an approximation to the Hessian and $H_k:=B_k^{-1}$ as \SB{the approximation to its inverse}. The algorithms are listed as simply as possible to emphasize the important components; the actual software used for numerical tests is open-source and available at {\small\url{https://github.com/stephenbeckr/zeroSR1}}.
%\url{http://amath.colorado.edu/faculty/becker/zeroSR1.html}. 

\POc{}{In Sections~\ref{sec:SR1} and~\ref{sec:BFGS}, we will prove Algorithm~\ref{alg:framework} converges linearly under the assumption that $f$ is strongly convex and $t=1$, which is the standard theoretically controllable setting for Newton and quasi-Newton methods. Moreover, global convergence of subsequences to a minimizer for the line-search variant can be deduced from the literature \cite{Salzo16,BLPP16,OFB17}. Thanks to the line search, the choice of the metric \emph{need not} obey monotonicity. If standard assumptions on the monotonicity of the metric are satisfied, convergence to a minimizer can be proved \cite{Salzo16,BLPP16}. Moreover, the convergence results in \cite{BLPP16} account for inexact evaluation of the proximal mapping, which even allows us to invoke a semi-smooth Newton Method for solving the subproblems numerically (see Section~\ref{subsec:semi-smooth-Newton}).}

\begin{algorithm}[h]
    \caption{Quasi-Newton forward--backward framework to solve \eqref{eq:minP}
        %$\min f+h$
         \label{alg:framework} }
\begin{algorithmic}[1]
\REQUIRE $x_0\in\dom(f+h)$, %Lipschitz constant estimate $L$ of $\nabla f$,
 stopping criterion $\epsilon$,
method to compute stepsizes $t$ and $\stk$ (\eg based on the Lipschitz constant estimate $L$ of $\nabla f$ and strong convexity $\mu$ of $f$)
  \FOR{$k=1,2,3,\dots$}
  \STATE $s_k \leftarrow x_k - x_{k-1}$
  \STATE $y_k \leftarrow \nabla f(x_k) - \nabla f(x_{k-1})$
  \STATE Compute $H_k$ according to a quasi-Newton framework
%  \STATE Compute the rank-1 proximity operator (see Section~\ref{sec:prox})
  \STATE Define $B_k = H_k^{-1}$ and compute the variable metric proximity operator (see Section~\ref{sec:prox}) with stepsize $\stk$
  \begin{equation}
  \label{eq:fbsr1}
      \bar{x}_{k+1} \leftarrow  \prox^{B_k}_{\stk h}( x_k - \stk H_k \nabla f(x_k) )  
  \end{equation}
  \STATE $p_k \leftarrow \bar{x}_{k+1} - x_k$ and 
  terminate if $\|p_k\| < \epsilon $
  \STATE Line-search along the ray $x_k + t p_k$ to determine $x_{k+1}$, or choose $t=1$.
  \ENDFOR
\end{algorithmic}  
\end{algorithm}

\begin{algorithm}[h]
    \caption{Zero-memory Symmetric Rank 1 (0SR1) algorithm to solve \eqref{eq:minP}, cf.\ Section~\ref{sec:SR1} \label{alg:main} }
    \begin{algorithmic}[1]
        \REQUIRE as for Algorithm~\ref{alg:framework}, and parameters $\gamma,\tau_\text{min},\tau_\text{max}$ for Algorithm~\ref{alg:SR1}
%        \nonl         \STATE blah 
%        \NoNumber{ test}
        \STATEx Iterate as in Algorithm~\ref{alg:framework}, with line 4 as:
%        \FOR{$k=1,2,3,\dots$}
%        \STATE $s_k \leftarrow x_k - x_{k-1}$
%        \STATE $y_k \leftarrow \nabla f(x_k) - \nabla f(x_{k-1})$
\makeatletter
\setcounter{ALG@line}{3}
\makeatother
        \STATE Compute $H_k$ via Algorithm~\ref{alg:SR1} (diagonal plus rank one)%, and define $B_k = H_k^{-1}$.
%        \STATE Compute the rank-1 proximity operator (see Section~\ref{sec:prox})
%        \begin{equation}
%        \label{eq:fbsr1}
%        \bar{x}_{k+1} \leftarrow  \prox^{B_k}_{\stk h}( x_k - \stk H_k \nabla f(x_k) )  
%        \end{equation}
%        \STATE $p_k \leftarrow \bar{x}_{k+1} - x_k$ and 
%        terminate if $\|p_k\| < \epsilon $
%        \STATE Line-search along the ray $x_k + t p_k$ to determine $x_{k+1}$, or choose $t=1$.
%\makeatletter
%\setcounter{ALC@line}{7}
%\makeatother
%        \ENDFOR
    \end{algorithmic}  
\end{algorithm}

\begin{algorithm}[h]
    \caption{Zero-memory BFGS (0BFGS) algorithm to solve \eqref{eq:minP}, cf.\ Section~\ref{sec:BFGS} \label{alg:LBFGS} }
    \begin{algorithmic}[1]
        \REQUIRE as for Algorithm~\ref{alg:framework} 
        \STATEx Iterate as in Algorithm~\ref{alg:framework}, with line 4 as:
        \makeatletter
        \setcounter{ALG@line}{3}
        \makeatother
        \STATE Compute $H_k$ via Eq.~\eqref{eq:BFGS-inv-Hess-formula-derivation}) (diagonal plus rank two)
    \end{algorithmic}  
\end{algorithm}

\begin{remark} \label{rem:pm-rank-1-confusion}
    The usage of the terms ``diagonal $-$ rank $r$'' and ``diagonal $+$ rank $r$'' \SB{needs} clarification. The meaning of these terms is that $B_k=D-\sum_{i=1}^r u_i u_i^\top$ or $B_k=D+\sum_{i=1}^r u_i u_i^\top$, respectively, where $D$ is a diagonal matrix and $u_i \in \R^N$. Collectively, to cover both cases, $B_k=D \pm \sum_{i=1}^r u_i u_i^\top$ is used. Algorithmically, the choice of ``$+$'' or ``$-$'' is crucial.

  For instance, if we talk about a ``diagonal $\pm$ rank 1 quasi-Newton method'', this taxonomy applies to the approximation of the Hessian $B_k$. Since, the inverse $H_k$ can be computed conveniently with the Sherman--Morrison inversion lemma, it is also of type ``diagonal $\mp$ rank 1'', where the sign of the rank 1 part is flipped. 
  The analysis in \cite{BF12} of the rank 1 proximity operator applied to the case ``diagonal $+$ rank 1''. In this paper, we cover both cases ``diagonal $\pm$ rank 1'', which generalizes and formalizes the ``diagonal $-$ rank 1'' setting in \cite{KV17}.
\end{remark}

%%%%%%%%%%%%%%%%%%%%%%%%%%%%%%%%%%%%%%%%%%%%%%%%%%%%
\subsection{Relation to prior work}
\label{sec:relation-prior-work}

\paragraph{First-order methods}
The algorithm in \eqref{eq:firstOrder} with $B_k=\Id$ is variously known as proximal \SB{gradient} descent or iterated shrinkage/thresholding algorithm (IST or ISTA). It has a grounded convergence theory, and also admits over-relaxation factors $\alpha \in (0,1)$~\cite{CombettesPesquetChapter}.

The spectral projected gradient (SPG)~\cite{SPG} method was designed as an extension of the Barzilai--Borwein spectral step-length method to constrained problems. 
 In~\cite{WrightSparsa08}, it was extended to non-smooth problems by allowing general proximity operators.  
 %Though not done formally in the literature, it is easy to extend SPG to non-smooth problems by allowing general proximity operators. 
The Barzilai--Borwein method~\cite{BB88} uses a specific choice of step-length $\stk$ motivated by quasi-Newton methods. Numerical evidence suggests the SPG/Sp\-aRSA method is highly effective, although convergence results are not as strong as for ISTA. %, and theoretical analysis requires special line searches and safeguard steps.

FISTA~\cite{FISTA} is a (two-step) inertial version of ISTA inspired by the work of Nesterov \cite{Nest83}. It can be seen as an explicit-implicit discretization of a nonlinear second-order dynamical system (oscillator) with viscous damping that vanishes asymptotically in a moderate way~\cite{su2014differential,AttouchFISTA16}. While the stepsize $\step$ is chosen in a similar way to ISTA (though with a smaller upper-bound), in our implementation, we tweak the original approach by using a Barzilai--Borwein step size, a standard line search, and restart \cite{DC15}, since this led to improved performance. 

Recently, \cite{OP17} has shown that optimizing the inertial parameter in each iteration of FISTA, applied to the sum of a quadratic function and a non-smooth function, the method is equivalent to the zero memory SR1 proximal quasi-Newton method that we propose in Section~\ref{sec:SR1}. Convergence is analyzed with respect to standard step sizes that relate to the Lipschitz constant, which does not cover the case of Barzilai--Borwein step size.

The above approaches assume $B_k$ is a constant diagonal. The general diagonal case was considered in several papers in the 1980s as a simple quasi-Newton method, but never widely adapted. Variable metric operator splitting methods have been designed to solve monotone inclusion problems and convex minimization problems, see for instance~\cite{CV14,vu2013variable} in the maximal monotone case and~\cite{chen1997convergence} for the strongly monotone case. The convergence proofs rely on a variable metric extension of quasi-Fej\'{e}r monotonicity~\cite{CV13}. In particular, this requires the variable metric to be designed a priori to verify appropriate growth conditions. However, it is not clear how to make the metric adapt to the geometry of the problem. In fact, in practice, the metric is usually chosen to be diagonal for the proximity operator to be easily computable. When the metric is not diagonal but fixed, these methods can be viewed as pre-conditioned versions that were shown to perform well in practice for certain problems (i.e. functions $h$)~\cite{ChambollePock11b,bredies2015preconditioned}. But again, the choice of the metric (pre-conditioner) is quite limited for computational and storage reasons.

\paragraph{Active set approaches} Active set methods take a simple step, such as gradient projection, to identify active variables, and then uses a more advanced quadratic model to solve for the free variables. A well-known such method is L-BFGS-B \cite{LBFGSB,L-BFGS-B-97} which handles general box-constrained problems; we test an updated version~\cite{LBFGSB2011}. 
A recent bound-constrained solver is ASA~\cite{HagerZhang06} which uses a conjugate gradient (CG) solver on the free variables, and shows good results compared to L-BFGS-B, SPG, GENCAN and TRON.
We also compare to several active set approaches specialized for $\ell_1$ penalties:
``Orthant-wise Learning'' (OWL)~\cite{AndrewGao07}, 
``Projected Scaled Sub-gradient + Active Set'' (PSSas)~\cite{Schmidt2007a},
``Fixed-point continuation + Active Set'' (FPC\_AS)~\cite{FPCAS},
and ``CG + IST'' (CGIST)~\cite{GoldsteinSetzer11}.

\paragraph{Other approaches}
By transforming the problem into a standard conic programming problem, the generic problem is amenable to interior-point methods (IPM). IPM requires solving a Newton-step equation, so first-order like ``Hessian-free'' variants of IPM solve the Newton-step approximately, either by approximately solving the equation or by subsampling the Hessian. The main issues are speed and robust stopping criteria for the approximations.  

Yet another approach is to include the non-smooth $h$ term in the quadratic approximation. Yu et al.~\cite{YuVishwanathan10} propose a non-smooth modification of BFGS and L-BFGS, and test on problems where $h$ is typically a hinge-loss or related function. Although convergence of this method cannot be expected in general, there are special cases for which convergence results could be established \cite{LO13,LZ15}, and more recently \cite{GL17}. The empirically justified good numerical performance
has been observed for 
% is known since 
decades \cite{Lemarechal82}.

The projected quasi-Newton (PQN) algorithm~\cite{projQuasiNewton09,projNewton11} is perhaps the most elegant and logical extension of quasi-Newton methods, but it involves solving a sub-iteration or need to be restricted to a diagonal metric in the implementation \cite{BZZ09,BP15}. PQN proposes the SPG~\cite{SPG} algorithm for the subproblems, and finds that this is an efficient trade-off whenever the cost function (which is not involved in the sub-iteration) is significantly more expensive to evaluate than projecting onto the constraints. Again, the cost of the sub-problem solver (and a suitable stopping criteria for this inner solve) are issues. 
The paper \cite{friedlander2017efficient} shows how the sub-problem can be solved efficiently by a special interior-point method when $h$ is a quadratic-support function. % Added 12/29/17
As discussed in~\cite{Saunders14}, it is possible to generalize PQN to general non-smooth problems whenever the proximity operator is known (since, as mentioned above, it is possible to extend SPG to this case). In the same line of methods, \cite{BLPP16} proposes a flexible proximal quasi-Newton method that extends \cite{BP15} to simple proximal operators, though a diagonal metric is considered in the implementation. Another work that unifies and generalizes several of the works mentioned above in a variable metric (i.e. quasi-Newton) setting is \cite{Salzo16}.

A more general and efficient step size strategy with memory was proposed in \cite{Fletcher11} for unconstrained optimization, which was generalized to a scaled gradient projection method in \cite{PPZ15}, and used in the proximal gradient method in \cite{BLPPR16}. However, the flexible choice of the step size and the scaling of the metric is not for free when convergence guarantees are sought. \cite{BLPPR16,BLPP16} rely on a line search strategy to account for a descent of the objective values. The metric in \cite{CPR13} is constructed such that \eqref{eq:Q} is a majorizer of the (possibly non-convex) objective and the step size selection is more conservative, however line search can be avoided. 

The works \cite{PSB14,STP17} make use of the so-called forward--backward envelope, a concept that allows them to reinterpret the forward--backward splitting algorithm as a variable metric gradient method for a smooth optimization problem. Using this reformulation, they can apply classical Newton or quasi-Newton methods. Proximal quasi-Newton methods have also been considered in combination with the Heavy-ball method \cite{Ochs16}, and have been generalized further. 
%\cite{Nguyen17} considers iteration dependent Bregman proximal mappings, which comprises a scaling of the metric as a special case.

The proximal quasi-Newton methods described so far simply assume that the scaled proximal mapping can be solved efficiently, rely on solving subproblems, or simple diagonal scaling metrics. The first work on systematically solving non-diagonally scaled proximal mappings efficiently is the conference version of this paper \cite{BF12}. The key is structure of the metric. In \cite{BF12}, it is assumed to be given as the sum of a diagonal and a rank-1 matrix. For the special case of the $\ell_1$-norm, the approach was transferred to the difference of a diagonal and a rank-1 matrix in \cite{KV17}. A systematic analysis for both cases where a rank-$r$ modification is allowed, is presented in this paper. 

\PO{The key result for efficiently computing the proximal mapping in this paper reveals a decomposition into a simple proximal mapping (for example, w.r.t. a diagonal metric) and a low-dimensional operator equation (root finding problem). In several cases, the operator equation can be solved exactly using specialized techniques. In the general case, we rely on a semi-smooth Newton strategy. It is known that the convergence of the latter, under mild conditions, is remarkably (locally) super-linear~\cite{FacchineiPang03}, which may even be improved to quadratic convergence under strong semi-smoothness \cite{QS93}. A similar result was independently obtained in \cite{Kum92} under similar assumptions. 

Due to the great success of Newton's method for smooth equations, the non-smooth setting has been actively studied and is still the subject of ongoing research, see for instance the recent monograph \cite{Ulbrich11}. Early studies of generalizing Newton's method for solving non-smooth equations include \cite{KS86} for piecewise smooth equations, \cite{Pang90,Rob94} for so-called B-differentiable equations and \cite{Kum88} for locally Lipschitz functions. As pointed out in \cite{QS93}, semi-smoothness is a crucial property in the super-linear convergence analysis of these methods. Semi-smooth Newton methods have also been adapted to non-smooth operator equations in function spaces \cite{Ulbrich02}. Recognizing semi-smoothness is however not always immediate. In~\cite{BolteSN09}, the authors proposed a large class of semi-smooth mappings. Our convergence results on the semi-smooth Newton method will then rely on \cite{FacchineiPang03,BolteSN09}.} 

%\PO{I am not sure if the following should also be included. Possibly the context of monotone operators leads us too far off? That's why I didn't continue yet. What do you think?}
%\JF{We can keep it, but we have to make it clear what is different and new.
%In particular, we are interested in designing appropriate (quasi-Newton-type) metrrics, while they are supposed given in [41,42].  Moreover, to apply the framework of variable quasi-fejerian sequences, the metric should be designed to verify some growth conditions, which are that easy to verify in that case.
%The deep reason is that from a dynamical system point of view, our system is closed-loop (the metric depends on the iterates) while it is open-loop in [41,42].}
%\PO{Jalal, could you please write the review of the remaining works on monotone operators?}

%\JF{Discuss these works:}
%\begin{itemize}
%\item Quasi-Newton FB: Karimi and Vavasis.
%\item Newton (non-convex) FB: Patrinos et al. \cite{PSB14,STP17}, , %Chouzenoux et al. (KL), Salzo et al..
%\item Attouch and Svaiter: variations around Newton FB for monotone inclusion.
%\item Variable metric quasi-Fejerian sequences \cite[?]{CV13} and application to FB convergence analysis: Combettes and Vu. Review the work of Parente et al. on VM PPA. \cite[?]{PLS08}
%\end{itemize}

%%%%%%%%%%%%%%%%%%%%%%%%%%%%%%%%%%%%%%%%%%%%%%%%%%%%%%%%%%%%%%%%%%%%%%%%%%%%%%%%
%% Proximal calculus in H_V %%%%%%%%%%%%%%%%%%%%%%%%%%%%%%%%%%%%%%%%%%%%%%%%%%%%
%%%%%%%%%%%%%%%%%%%%%%%%%%%%%%%%%%%%%%%%%%%%%%%%%%%%%%%%%%%%%%%%%%%%%%%%%%%%%%%%

\section{Proximal calculus in \texorpdfstring{$\Hm_V$}{Hv}}
\label{sec:prox}

A key step for efficiently implementing Algorithm~\ref{alg:framework} is the evaluation of the proximity operator in \eqref{eq:fbsr1}. Even if the proximal mapping $\prox_{\f}$ can be computed efficiently, in general, this is not true for $\prox_{\f}^{V}$. However, we construct $V$ of the form ``diagonal $\pm$ rank $r$'', for which we propose an efficient calculus in this section. In order to cover this topic broadly, we assume $V=P\pm Q$ is a rank-$r$ modification $Q$ of a matrix $P$. The main result (Theorem~\ref{theo:proxVrankr}) shows that the proximity operator $\prox_{\f}^V$ in the modified metric $V$ can be reduced essentially to the proximity operator $\prox_{\f}^P$ without the rank-$r$ modification and an $r$-dimensional root finding problem.

\subsection{Preliminaries}

We only recall here essential definitions. More notions, results from convex analysis as well as proofs are deferred to the appendix.

%%%%%%%%%%%%%%%%%%%%%%%%%%%%%%%%%%%%%%%%%%%%%%%%%%%
\begin{definition}[Proximity operator \cite{Moreau1962}]
\label{def:prox} 
Let $\f \in \Gamma_0(\Hm)$. Then, for every $x\in\Hm$, the
function $z \mapsto \frac{1}{2}\norm{x-z}^{2} + \f(z)$ achieves its infimum at a unique
point denoted by $\prox_{\f}(x)$. The single-valued operator $\prox_{\f}: \Hm \to \Hm$ thus
defined is the \textit{proximity operator} or proximal mapping of $\f$.
% SRB 1/21/18, adding just a bit below: 
Equivalently, $\prox_{\f} = (\Id+\partial \f)^{-1} $ where $\partial \f$ is the subdifferential of $\f$. \JF{When $\f$ is the indicator function of a non-empty closed convex set $\C$, the corresponding proximity operator is the orthogonal projector onto $\C$, denoted $\proj_{\C}$.}
\end{definition}

%%%%%%%%%%%%%%%%%%%%%%%%%%%%%%%%%%%%%%%%%%%%%%%%%%%

Throughout, we denote by
\begin{equation} \label{eq:def:prox-in-Hv}
  \prox^V_\f(x) = \argmin_{z\in\Hm} \f(z) + \frac 12 \|x-z\|^2_V = (\Id+V^{-1} \partial \f)^{-1}(x) ~,
\end{equation}
%where $\partial \f$ is the subdifferential of $\f$,
 the proximity operator of $\f$ w.r.t.\ the norm endowing $\Hm_V$ for some $V \in \sdp(N)$. Note that since $V \in \sdp(N)$, the proximity operator $\prox^V_\f$ is well-defined. The proximity operator $\prox^V_\f$ can also be expressed in the metric of $\Hm$.
\begin{lemma} \label{lem:rel-prox-HV-to-H}
Let $\f \in \Gamma_0(\Hm)$ and $V\in \sdp(N)$. Then, the following holds:
\[
  \prox^V_\f (x) = V^{-1/2}\circ \prox_{\f\circ V^{-1/2}}\circ V^{1/2} (x) \,.
\]
\end{lemma}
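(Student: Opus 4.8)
The plan is to reduce the proximity operator in the metric $\Hm_V$ to the standard Euclidean proximity operator by a linear change of variables that ``flattens'' the metric. Since $V \in \sdp(N)$, it admits a unique symmetric positive-definite square root $V^{1/2}$ with inverse $V^{-1/2}$, and the map $z \mapsto V^{1/2}z$ is a bijection on $\RR^N$. The key observation is that the $V$-norm of a vector equals the Euclidean norm of its image under $V^{1/2}$, i.e.\ $\|w\|_V^2 = \pds{w}{Vw} = \pds{V^{1/2}w}{V^{1/2}w} = \|V^{1/2}w\|^2$.

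First I would start from the definition \eqref{eq:def:prox-in-Hv},
\[
  \prox^V_\f(x) = \argmin_{z\in\Hm} \Big\{ \f(z) + \tfrac 12 \|x-z\|^2_V \Big\} ,
\]
and substitute $z = V^{-1/2}u$, so that $u = V^{1/2}z$ ranges over all of $\RR^N$ as $z$ does. Using $\|x - V^{-1/2}u\|_V^2 = \|V^{1/2}x - u\|^2$, the objective becomes $\f(V^{-1/2}u) + \tfrac12\|V^{1/2}x - u\|^2$, which is exactly the Euclidean-proximity objective for the function $\f \circ V^{-1/2}$ evaluated at the point $V^{1/2}x$. Since $V^{-1/2}$ is a bounded bijective linear map and $\f \in \Gamma_0(\Hm)$, the composition $\f \circ V^{-1/2}$ is again proper, lsc and convex, so its proximity operator is well-defined and the minimizer in $u$ is unique.

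Next I would conclude that the unique minimizer in $u$ is $u^\star = \prox_{\f \circ V^{-1/2}}(V^{1/2}x)$, and then undo the substitution: the corresponding $z^\star = V^{-1/2}u^\star = V^{-1/2}\prox_{\f\circ V^{-1/2}}(V^{1/2}x)$, which is precisely the claimed composition $V^{-1/2}\circ \prox_{\f\circ V^{-1/2}}\circ V^{1/2}(x)$. The bijectivity of $z \mapsto V^{1/2}z$ guarantees that the $\argmin$ over $z$ and the $\argmin$ over $u$ are in exact correspondence, so uniqueness transfers cleanly and no spurious or missing minimizers appear.

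I do not expect a genuine obstacle here; the only points requiring care are bookkeeping rather than difficulty. I would make sure to invoke the symmetry of $V^{1/2}$ (so that $V^{1/2}$ is its own adjoint and $\pds{V^{1/2}w}{V^{1/2}w} = \pds{w}{Vw}$ holds), and to justify that the change of variables preserves the class $\Gamma_0$ so that $\prox_{\f\circ V^{-1/2}}$ is single-valued via Definition~\ref{def:prox}. The mildly delicate step is confirming that the objective in the new variable genuinely matches the Euclidean proximal objective term by term, which hinges on the identity $\|x - V^{-1/2}u\|_V = \|V^{1/2}x - u\|$; everything else is a formal consequence of the bijection.
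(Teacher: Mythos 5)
Your proof is correct. It is worth noting, though, that you and the paper take different (if equally short) routes. You work directly from the variational definition \eqref{eq:def:prox-in-Hv}: the substitution $z = V^{-1/2}u$ together with the identity $\|x - V^{-1/2}u\|_V = \|V^{1/2}x - u\|$ turns the $V$-metric proximal objective into the Euclidean proximal objective of $\f \circ V^{-1/2}$ at $V^{1/2}x$, and the bijectivity of $V^{1/2}$ transfers the unique minimizer back. The paper instead starts from the resolvent characterization $\prox^V_\f = (\Id + V^{-1}\partial\f)^{-1}$ and runs a chain of equivalences on the optimality inclusion: $x \in p + V^{-1}\partial\f(p)$ is multiplied through by $V^{1/2}$ and rewritten as $V^{1/2}x \in V^{1/2}p + V^{-1/2}\circ\partial\f\circ V^{-1/2}(V^{1/2}p)$, which is the optimality condition for $\prox_{\f\circ V^{-1/2}}$ at $V^{1/2}x$; this implicitly invokes the subdifferential chain rule $\partial(\f\circ A) = A^\top\circ\partial\f\circ A$ for the invertible symmetric map $A = V^{-1/2}$. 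Your argument buys a slightly more elementary proof (no subdifferential calculus is needed, only that $\f\circ V^{-1/2}\in\Gamma_0(\Hm)$ so the Euclidean prox is single-valued), while the paper's resolvent-based argument meshes more directly with the operator-theoretic identities (e.g.\ Lemma~\ref{lem:moreauidentV}) used elsewhere in the appendix. Your points of care --- symmetry of $V^{1/2}$, preservation of $\Gamma_0$ under composition with a bounded bijective linear map, and the exact correspondence of minimizers under the bijection --- are precisely the right ones, and none of them hides a gap.
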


The proof is in Section~\ref{appendix:lem:rel-prox-HV-to-H}.
The important Moreau identity can be translated to the space $\Hm_V$.
\begin{lemma}[Moreau identity in $\Hm_V$]
\label{lem:moreauidentV}
Let $\f \in \Gamma_0(\Hm)$, then for any $x \in \Hm$
\be
\prox^V_{\rho \f^*}(x) + \rho V^{-1} \circ \prox^{V^{-1}}_{\f/\rho} \circ V(x/\rho) = x, \forall ~ 0 < \rho < +\infty ~.
\ee
For $\rho=1$, it simplifies to 
\be
\prox^V_{\f}(x) = x - V^{-1} \circ \prox^{V^{-1}}_{\f^*} \circ V(x) ~.
\ee
\end{lemma}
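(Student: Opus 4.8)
The plan is to reduce the $V$-metric identity to the classical (Euclidean) Moreau decomposition via the change-of-metric formula of Lemma~\ref{lem:rel-prox-HV-to-H}. I write $U := V^{1/2}$, so that $U \in \sdp(N)$, $V = U^2$, $V^{-1} = U^{-2}$ and $(V^{-1})^{1/2} = U^{-1}$. The first step is to rewrite both proximity operators appearing in the claim purely in the Euclidean metric of $\Hm$. Applying Lemma~\ref{lem:rel-prox-HV-to-H} to the first term gives $\prox^V_{\rho \f^*}(x) = U^{-1}\prox_{\rho(\f^* \circ U^{-1})}(Ux)$. For the second term I apply the same lemma with the metric $V^{-1}$, noting that the inner function is composed with $(V^{-1})^{-1/2} = V^{1/2} = U$ while the argument is mapped by $(V^{-1})^{1/2} = U^{-1}$; after evaluating at $V(x/\rho) = U^2 x/\rho$ and left-multiplying by $\rho V^{-1} = \rho U^{-2}$, this term collapses to $\rho\, U^{-1}\prox_{(\f\circ U)/\rho}(Ux/\rho)$. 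Factoring out the common $U^{-1}$ and substituting $y := Ux$, the whole identity is equivalent to showing $\prox_{\rho(\f^*\circ U^{-1})}(y) + \rho\,\prox_{(\f\circ U)/\rho}(y/\rho) = y$ in $\Hm$.

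The second step is to recognize the two Euclidean functions as a Fenchel-conjugate pair. Setting $\phi := \f \circ U \in \Gamma_0(\Hm)$ (composition with the invertible symmetric map $U$ preserves properness, lower semicontinuity and convexity), the substitution $v = Uw$ yields $\phi^*(y) = \sup_v \langle v, U^{-1}y\rangle - \f(v) = \f^*(U^{-1}y)$, i.e. $\phi^* = \f^* \circ U^{-1}$. Hence the displayed Euclidean identity reads $\prox_{\rho\phi^*}(y) + \rho\,\prox_{\phi/\rho}(y/\rho) = y$, which is exactly the classical scaled Moreau decomposition (\eg \cite{BauschkeCombettes11}) applied to $\psi = \phi^*$, using $\phi^{**}=\phi$. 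This closes the first claimed equation. The $\rho=1$ specialization then follows by setting $\rho=1$ above, namely $\prox^V_{\f^*}(x) + V^{-1}\circ\prox^{V^{-1}}_{\f}\circ V(x) = x$, substituting $\f \leftarrow \f^*$ (and invoking $\f^{**}=\f$), and rearranging into the stated form $\prox^V_{\f}(x) = x - V^{-1}\circ\prox^{V^{-1}}_{\f^*}\circ V(x)$.

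The main obstacle, and really the only delicate point, is correctly bookkeeping the square-root factors in the dual term. Because the second proximity operator is taken in the metric $V^{-1}$, the change-of-metric formula composes its inner function with $(V^{-1})^{-1/2} = V^{1/2} = U$, so that it reads $\f \circ U$ rather than $\f \circ U^{-1}$. This is precisely the algebraic mechanism that makes $\f^* \circ U^{-1}$ (from the first term) and $\f \circ U$ (from the second) Fenchel conjugates of one another; had the second term instead been taken in the metric $V$, the inner function would have been $\f \circ U^{-1}$ and the conjugate pairing would fail. It is therefore essential that the dual proximity operator is taken in the dual metric $\Hm_{V^{-1}}$, consistent with the earlier observation that the dual of $\Hm_V$ under $\langle\cdot,\cdot\rangle$ is $\Hm_{V^{-1}}$. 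One must also keep the scalar factors $\rho$ and $1/\rho$ aligned through the composition with $V$ and the division by $\rho$ in the argument, but this is routine once the square-root factors are in place.
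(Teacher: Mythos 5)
Your proof is correct, but it takes a genuinely different route from the paper's. The paper proves Lemma~\ref{lem:moreauidentV} directly at the level of monotone operators: it writes $p=\prox^{V}_{\rho \f^*}(x)=(\Id+V^{-1}\rho\,\partial \f^*)^{-1}(x)$, unfolds this to $V(x-p)/\rho\in\partial \f^*(p)$, applies the subdifferential inversion rule $v\in\partial \f^*(p)\iff p\in\partial \f(v)$, and reassembles the resulting inclusion into the resolvent $(\Id+V\partial(\f/\rho))^{-1}$, which is precisely $\prox^{V^{-1}}_{\f/\rho}$; no change of variables or conjugacy calculus is needed beyond that single inversion. You instead push everything back to the Euclidean metric via Lemma~\ref{lem:rel-prox-HV-to-H}, observe that $(\f\circ V^{1/2})^{*}=\f^{*}\circ V^{-1/2}$ (Lemma~\ref{lem:conjcalc}\ref{conjlin}), and invoke the classical Moreau identity (Lemma~\ref{lem:moreauident}) for the conjugate pair $\phi=\f\circ V^{1/2}$, $\phi^{*}=\f^{*}\circ V^{-1/2}$. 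Your square-root bookkeeping is right: the $V^{-1}$-metric prox composes its function with $(V^{-1})^{-1/2}=V^{1/2}$, which is exactly what makes the two Euclidean functions conjugates, and the final $\rho=1$ specialization via $\f^{**}=\f$ is fine (one tiny simplification: you can apply Lemma~\ref{lem:moreauident} to $\phi$ directly rather than to $\psi=\phi^{*}$). What each approach buys: yours makes the statement transparently a corollary of the already-established change-of-metric formula plus the known Euclidean identity, at the cost of introducing and then cancelling the square roots $V^{\pm 1/2}$; the paper's resolvent computation avoids square roots entirely and would generalize verbatim to maximal monotone operators that are not subdifferentials, but it hides the conjugate-pair structure that your argument makes explicit.
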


The proof is in Section~\ref{appendix:lem:moreauidentV}.

%%%%%%%%%%%%%%%%%%%%%%%%%%%%%%%%%%%%%%%%%%%%%%%%%%%
\subsection{Rank-$r$ modified metric}
\label{sec:rank-r-mod-prox}
\newcommand{\matP}{P}
\newcommand{\matQ}{Q}
\newcommand{\matW}{W}
\newcommand{\smat}[1]{{\scriptsize\begin{bmatrix}#1\end{bmatrix}}}
\newcommand{\map}[3]{#1\colon #2\rightarrow#3}

\JF{In this section, we present the general result for a metric $V = \matP\pm \matQ\in \sdp(N)$, where $\matP\in\sdp(N)$ and $\matQ=\sum_{i=1}^r u_i u_i^\top \in \R^{N\times N}$ is symmetric with $\rank(\matQ)=r$ and $r \leq N$, given by $r$ linearly independent vectors $u_1,\ldots,u_r\in \Hm$. Computing the proximity operator $\prox_\f^V$ can be reduced to the simpler problem of evaluating $\prox_\f^\matP$ and an $r$ dimensional root finding problem, which can be solved either exactly (see Section~\ref{sec:prox-diag-pm-rank-1}) or by efficient fast iterative procedures with controlled complexity such as bisection (Section~\ref{subsec:rank1bisection}) or semi-smooth Newton iterations (Section~\ref{subsec:semi-smooth-Newton}).}

\subsubsection{\JF{General case}}
We start with our most general result.
\begin{theorem}[Proximity operator for a rank-$r$ modified metric]
\label{theo:proxVrankr}
Let $\f \in \Gamma_0(\Hm)$ \JF{and $V = \matP\pm \matQ\in \sdp(N)$, where $\matP\in\sdp(N)$ and $\matQ=\sum_{i=1}^r u_i u_i^\top \in \R^{N\times N}$ with $r=\rank(\matQ) \leq N$. Denote $U=(u_1,\cdots,u_r)$.} Then,
\be
\label{eq:proxVgenr}
  \begin{split}
\prox^V_\f(x) =&\  \matP^{-1/2} \circ \prox_{\f \circ \matP^{-1/2}}\circ \matP^{1/2}(x \mp \matP^{-1}U\alpha^\star) \\
              =&\ \prox^\matP_\f (x \mp \matP^{-1}U\alpha^\star) ~,
  \end{split}
%\prox^V_\f(x) = D^{-1/2} \circ \prox_{\f \circ D^{-1/2}}\circ D^{1/2}(x \mp D^{-1}U\alpha^\star) ~,
\ee
\JF{where $\alpha^\star \in \RR^r$ is the unique zero of the mapping $\lfun: \RR^r \to \RR^r$}
\be
\label{eq:roothgenr}
  \begin{split}
  \lfun(\alpha) :=&\  U^\top\parenth{x - \matP^{-1/2} \circ \prox_{\f \circ \matP^{-1/2}}\circ \matP^{1/2}(x \mp \matP^{-1} U\alpha)} + \alpha \\
            =&\  U^\top\parenth{x - \prox^\matP_\f(x \mp \matP^{-1} U\alpha)} + \alpha ~.
  \end{split}
%\lfun(\alpha) = U^\top\parenth{x - D^{-1/2} \circ \prox_{\f \circ D^{-1/2}}\circ D^{1/2}(x \mp D^{-1} U\alpha)} + B \alpha ~,
\ee
%where $\matB=U^\top Q^+ U \JF{= \proj_{\ker(U)^\bot}} \in \sdp(r)$. 
The mapping $\lfun$ is Lipschitz continuous with Lipschitz constant $1+\opnorm{\matP^{-1/2}U}^2$, \JF{and strongly monotone with modulus $c$, where $c=1$ for $V=P+Q$ and $c=1-\opnorm{\matP^{-1/2}U}^2$ for $V=P-Q$}. 
\end{theorem}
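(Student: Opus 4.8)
The plan is to prove the three assertions in the order: (i) the root-finding reformulation \eqref{eq:proxVgenr}--\eqref{eq:roothgenr}, (ii) Lipschitz continuity of $\lfun$, and (iii) strong monotonicity, from which uniqueness of $\alpha^\star$ will follow. Throughout I abbreviate $T := \prox_{\f\circ P^{-1/2}}$, which is firmly nonexpansive because $P^{-1/2}$ is a linear bijection and hence $\f\circ P^{-1/2}\in\Gamma_0(\Hm)$, and I set $M := P^{-1/2}U\in\R^{N\times r}$ and $w := P^{1/2}x$.

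For (i) I would avoid a full duality computation and verify the formula directly from optimality conditions. Writing $z := \prox^V_\f(x)$, the definition \eqref{eq:def:prox-in-Hv} gives the inclusion $V(x-z)\in\partial\f(z)$. Setting $\alpha^\star := U^\top(z-x)$ and expanding $V = P\pm UU^\top$ turns this into $P(x-z)\mp U\alpha^\star\in\partial\f(z)$, which is exactly the optimality condition characterising $z=\prox^P_\f(x\mp P^{-1}U\alpha^\star)$; Lemma~\ref{lem:rel-prox-HV-to-H} then yields the first line of \eqref{eq:proxVgenr}. Substituting this identity for $z$ into $\lfun$ and using $\alpha^\star=U^\top(z-x)$ shows $\lfun(\alpha^\star)=U^\top(x-z)+\alpha^\star=0$, so a zero exists; its uniqueness is deferred to (iii). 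Equivalently, one may derive the reformulation constructively via Fenchel--Rockafellar duality when $V=P+Q$ and via Toland duality when $V=P-Q$, the latter being required because the objective is then a difference of two convex functions.

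For (ii) and (iii) I would first rewrite $\lfun$ in the reduced coordinates. Using Lemma~\ref{lem:rel-prox-HV-to-H}, the identity $P^{1/2}(x\mp P^{-1}U\alpha)=w\mp M\alpha$, and $M^\top=U^\top P^{-1/2}$, the map becomes
\[
  \lfun(\alpha)=M^\top\bpa{w-T(w\mp M\alpha)}+\alpha,
\]
so for any $\alpha,\beta$, writing $p:=T(w\mp M\alpha)$ and $q:=T(w\mp M\beta)$,
\[
  \lfun(\alpha)-\lfun(\beta)=-M^\top(p-q)+(\alpha-\beta).
\]
Lipschitz continuity follows from the triangle inequality and the nonexpansiveness of $T$: since $\norm{p-q}\le\norm{M(\alpha-\beta)}\le\opnorm{M}\norm{\alpha-\beta}$, we get $\norm{\lfun(\alpha)-\lfun(\beta)}\le(1+\opnorm{M}^2)\norm{\alpha-\beta}$. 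For strong monotonicity I would pair the displayed difference with $\alpha-\beta$ to obtain $\pds{\lfun(\alpha)-\lfun(\beta)}{\alpha-\beta}=\norm{\alpha-\beta}^2-\pds{p-q}{M(\alpha-\beta)}$, and then substitute $M(\alpha-\beta)=\mp(a-b)$ with $a:=w\mp M\alpha$, $b:=w\mp M\beta$. In the $V=P+Q$ case the term $-\pds{p-q}{M(\alpha-\beta)}$ equals $+\pds{p-q}{a-b}\ge\norm{p-q}^2\ge 0$ by firm nonexpansiveness of $T$, giving modulus $c=1$. In the $V=P-Q$ case it equals $-\pds{p-q}{a-b}\ge-\norm{a-b}^2\ge-\opnorm{M}^2\norm{\alpha-\beta}^2$ by Cauchy--Schwarz and nonexpansiveness, giving modulus $c=1-\opnorm{M}^2$. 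Strong monotonicity forces at most one zero, which together with (i) pins down $\alpha^\star$ uniquely.

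The main obstacle is the sign bookkeeping tying the $\pm$ in $V$ to the $\mp$ in the argument and to the direction of the monotonicity estimate, and in particular the $P-Q$ case: there the modulus $c=1-\opnorm{P^{-1/2}U}^2$ is positive precisely when $\opnorm{P^{-1/2}U}<1$, which is exactly the standing hypothesis $V=P-UU^\top\in\sdp(N)$ (since $P^{-1/2}VP^{-1/2}=\Id-MM^\top$). Keeping track of this equivalence, and justifying the reformulation in the nonconvex-looking $P-Q$ case where Fenchel duality must be replaced by Toland duality, are the delicate points; the Lipschitz and monotonicity estimates themselves are then routine consequences of the firm nonexpansiveness of $T$.
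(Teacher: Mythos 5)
Your proposal is correct, and it takes a genuinely different route from the paper. The paper derives \eqref{eq:proxVgenr}--\eqref{eq:roothgenr} by dualizing the full $N$-dimensional problem: after the change of variables $y=P^{1/2}z$ it applies Fenchel duality (for $V=P+Q$) or Toland duality (for $V=P-Q$), uses the conjugate calculus of Lemmas~\ref{lem:conjcalc} and~\ref{lem:conjquad} to identify the dual as the $r$-dimensional problem \eqref{eq:minalpha-rank-r} involving the Moreau envelope $\env{{(\f^*\circ P^{1/2})}}{1}$, recognizes $\lfun$ as the gradient of that dual objective, and obtains strong monotonicity from strong convexity of the dual --- which in the $P-Q$ case requires Moreau's identity plus a spectral argument showing $U^\top(Q^+-P^{-1})U = \Id_r - M^\top M \succ 0$. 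You instead bypass duality entirely: the subdifferential inclusion $V(x-z)\in\partial\f(z)$ with $\alpha^\star=U^\top(z-x)$ gives the fixed-point characterization and the existence of a zero of $\lfun$ directly, and both the Lipschitz constant and the monotonicity moduli fall out of the (firm) nonexpansiveness of $T=\prox_{\f\circ P^{-1/2}}$ after the sign substitution $M(\alpha-\beta)=\mp(a-b)$; uniqueness then closes the loop. Your computation is shorter, handles both signs uniformly without invoking Toland duality or the pseudo-inverse $Q^+$, and makes the positivity of $c=1-\opnorm{P^{-1/2}U}^2$ transparently equivalent to $V\in\sdp(N)$. What it gives up is the variational interpretation that the paper relies on downstream: identifying $\lfun$ as the gradient of the strongly convex dual objective \eqref{eq:minalpha-rank-r} is what later yields the a priori radius bound on $\alpha^\star$ used to initialize the semi-smooth Newton and bisection schemes, and the condition-number statement for running accelerated gradient descent on the dual. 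So your argument fully proves the theorem as stated, but if it replaced the paper's proof, the derivation of \eqref{eq:minalpha-rank-r} would have to be supplied separately for Sections~\ref{subsec:rank1bisection} and~\ref{subsec:semi-smooth-Newton}.
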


The proof is in Section~\ref{appendix:theo:proxVrankr}.

\begin{remark}
{~}
%\vspace*{-0.2cm}
\begin{itemize}
  \item The root finding problem in Theorem~\ref{theo:proxVrankr} emerges from the dual problem for solving $\prox_\f^V$. Passing to the dual problem reduces dramatically the dimensionality of the problem to be solved from $N$ to $r$ where usually $r\ll N$.
  %drastically reduces the dimensionality of the problem to be solved.
   %In fact, solving the dual problem reduces to an $r$-dimensional root finding problem of a strongly monotone function where usually $r\ll N$.
   The dual problem boils down to an $r$-dimensional root finding problem of a strongly monotone function.
  \item Theorem~\ref{theo:proxVrankr} simplifies the computation of $\prox_\f^V$ to $\prox_\f^P$ (or equivalently $\prox_{\f\circ\matP^{-1/2}}$), which is often much easier to solve. \JF{This is typically the case when $\matP$ is a diagonal matrix as will be considered in Section~\ref{sec:prox-diag-pm-rank-1}. Another interesting scenario is when $\f = \psi \circ \matP^{1/2}$, where $\psi \in \Gamma_0(\Hm)$ is a simple function so that $\prox_{\f\circ\matP^{-1/2}} = \prox_{\psi}$ is easy to compute. Thus the matrix $\matP$ in the expression of $V$ can be interpreted as a pre-conditioner. In Section~\ref{sec:prox-diag-pm-rank-1}, we will focus on the case $\matP$ is diagonal since all standard and efficient quasi-Newton methods (e.g., SR1, L-BFGS) use a diagonal $\matP$.}
  % and 
  \item The variable metric forward--backward splitting algorithm requires the inverse of the metric in the forward step. It can be computed using the Sherman-Morrison inversion lemma: If $V=\matP\pm \matQ$ with $\rank(Q)=r$, then  
  \[
      V^{-1} = P^{-1} \mp \widetilde Q^{-1}\,, \quad \widetilde Q^{-1} := P^{-1} Q\,(\Id \pm P^{-1} Q)^{-1}P^{-1}\,,
  \]
  with $\rank (\widetilde Q^{-1})=r$. Note that the sign of the rank-$r$ part flips, see also Remark~\ref{rem:pm-rank-1-confusion}.
  \item Using the inversion formula for $V=P\pm Q$ as in the preceding item, and using Lemma~\ref{lem:moreauidentV} (Moreau identity in $\Hm_V$), the computation of
  % SRB, 1/21/2018, changing this to flip V and inv(V), as I think it reads better this way
   the proximity operator of the convex conjugate function $\f^*$, $\prox_{\f^*}^{V}$, can
   be cast in terms of computing $\prox_\f^{V^{-1}}$.
%  
%  the proximity operator $\prox_\f^V$ can be seen to be as simple as computing $\prox_{\f^*}^{V^{-1}}$, the proximity operator of the convex conjugate function $\f^*$.
% \item \JFc{Because of positive semi-definitesss of $\matP$, adding orthogonality implies that this item of the remark applies only to the trivial case where $\matP \propto \Id$. This is why I removed it and added instead another comment above.}\SBc{}{Our examples in Section~\ref{sec:prox-diag-pm-rank-1} will assume $P$ is diagonal, but choosing $P$ to be orthogonal (so that $\matP^{1/2}=\matP=\matP^\top$) leads to another class of problems since $\prox^\matP_\f$ can be computed efficiently in terms of $\prox_\f$, see  \cite[Proposition~23.29(iii)]{BauschkeCombettes11}.}
 
\end{itemize}
\end{remark}
\begin{corollary} \label{cor:proxVrankr}
  Let $V = P + Q_1 - Q_2 \in \sdp(N)$ with $P\in\sdp(N)$ 
  and symmetric positive semi-definite matrices $Q_1,Q_2$ with $\rank(Q_i)=r_i$ and let $\Span(Q_i)$ be spanned by the columns of $U_i\in \R^{N\times r_i}$, $i=1,2$. Set $P_1=P+Q_1$. Then, for $\f\in \Gamma_0(\Hm)$, the following holds:
  \[
    \prox_\f^V (x) = \prox_\f^{P_1}(x+P_{1}^{-1}U_1\alpha_1^\star) 
    = \prox_\f^{P}(x+P_{1}^{-1}U_1\alpha_1^\star - P^{-1}U_2\alpha_2^\star)
  \]
  where $\alpha_i^\star\in \R^{r_i}$, $i=1,2$, are the unique zeros of the coupled system
  \[
    \begin{split}
    \lfun_1(\alpha_1,\alpha_2) =&\  U_1^\top(x - \prox_\f^{P}(x+ P_1^{-1} U_1\alpha_1 -  P^{-1} U_2\alpha_2)) - \alpha_1 \\
    \lfun_2(\alpha_1,\alpha_2) =&\  U_2^\top(x+P_{1}^{-1}U_1\alpha_1 - \prox_\f^{P}(x+P_{1}^{-1}U_1\alpha_1- P^{-1} U_2\alpha_2)) - \alpha_2 .
    \end{split}
  \]
%  where $\matB_{i} = U_i^\top Q_i^+ U_i\in \sdp(N)$, $i=1,2$.
\end{corollary}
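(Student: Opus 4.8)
The plan is to reduce Corollary~\ref{cor:proxVrankr} to the single-modification result of Theorem~\ref{theo:proxVrankr} by peeling off the two rank terms one at a time through the auxiliary metric $P_1 = P + Q_1$. The point of this particular splitting is that $P_1 \in \sdp(N)$, being the sum of an SPD and a positive semi-definite matrix, so that $V = P_1 - Q_2$ is a genuine \emph{negative} rank-$r_2$ modification of an SPD matrix, to which the ``$-$'' case of Theorem~\ref{theo:proxVrankr} applies; note that the complementary intermediate $P - Q_2$ need not be SPD, which is why the negative part must be resolved against $P_1$ rather than $P$. Throughout I would write $Q_i = U_i U_i^\top$, absorbing the nonzero eigenvalues of $Q_i$ into the columns of $U_i$, which is consistent with $\Span(Q_i) = \ran(U_i)$ and the rank equal to $r_i$.

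I would then apply Theorem~\ref{theo:proxVrankr} twice. First, with base $P_1$ and negative modification $Q_2$: its hypothesis holds because the strong-monotonicity modulus $1 - \opnorm{P_1^{-1/2}U_2}^2$ is positive, which is exactly equivalent to $V = P_1 - Q_2 \in \sdp(N)$. This expresses $\prox_\f^V(x)$ as $\prox_\f^{P_1}$ evaluated at a point shifted along $\ran(U_2)$, together with an $r_2$-dimensional root-finding problem. Second, since $P_1 = P + Q_1$ is a positive rank-$r_1$ modification of $P$, the ``$+$'' case of Theorem~\ref{theo:proxVrankr} (base $P$, modification $Q_1$) rewrites the map $y \mapsto \prox_\f^{P_1}(y)$ as $\prox_\f^P$ evaluated at $y$ shifted along $\ran(U_1)$, with its own $r_1$-dimensional root-finding problem. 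Substituting the first-stage shifted point for $y$ and composing the two shifts yields $\prox_\f^V(x) = \prox_\f^P(x + \cdots)$ of the stated form, while reading off the two stationarity relations — each inner $\prox_\f^P$ carrying the accumulated shift — produces the coupled pair $(\lfun_1,\lfun_2)$. Matching the precise signs, and the assignment of $P_1^{-1}$ versus $P^{-1}$ to the two index blocks, is then a routine if index-heavy bookkeeping step; the $-\alpha_i$ appearing in $\lfun_i$ merely reflects flipping the sign of each dual variable relative to the $+\alpha$ normalization of Theorem~\ref{theo:proxVrankr}.

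The hard part, and the step I would spend the most care on, is that the two root-finding problems are \emph{coupled}: the argument fed to the inner $\prox_\f^P$ in the second stage depends on the first-stage dual variable, and the first-stage equation must itself be re-expressed through the second reduction, so neither $\alpha_1^\star$ nor $\alpha_2^\star$ can be determined in isolation and a naive stagewise fixed-point argument is delicate. To prove that $\lfun_1 = \lfun_2 = 0$ has a \emph{unique} zero, I would instead identify this coupled system with the single stationarity condition of the minimax problem obtained by dualizing both rank terms simultaneously — Fenchel duality on the convex term $\tfrac12\|U_1^\top(x-\cdot)\|^2$ and Toland duality on the concave term $-\tfrac12\|U_2^\top(x-\cdot)\|^2$ — after eliminating the primal variable $z$ through $\prox_\f^P$. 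The reduced saddle function is strongly monotone in each block, with modulus $1$ in the $Q_1$ (plus) variable and modulus $1 - \opnorm{P_1^{-1/2}U_2}^2 > 0$ in the $Q_2$ (minus) variable, the positivity again inherited from $V \in \sdp(N)$; this forces a unique saddle point, hence a unique $(\alpha_1^\star,\alpha_2^\star)$, and the Lipschitz and strong-monotonicity estimates of Theorem~\ref{theo:proxVrankr} transfer to the components and justify exchanging $\min_z$ with the dual operations via Sion's minimax theorem.
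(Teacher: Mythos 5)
Your decomposition is exactly the paper's: the published proof is literally a one-sentence ``recursive application of Theorem~\ref{theo:proxVrankr} to $\prox_\f^V$ with $V=P_1-Q_2$, and to $\prox_\f^{P_1}$ with $P_1=P+Q_1$'', and you correctly identify both the order of the two applications (the negative modification must be peeled off against the SPD matrix $P_1$, since $P-Q_2$ need not be SPD) and why each application's hypotheses hold. That said, two of your judgments about where the difficulty lies are off.

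First, the step you single out as hard --- uniqueness of the zero of the coupled system --- needs none of the Fenchel/Toland saddle-point construction or Sion's theorem, and as sketched that route is the weakest part of your argument (Toland duality is not a convex duality, so the existence, blockwise strong monotonicity, and exchangeability of the ``reduced saddle function'' would all require separate proof). The nested argument is already airtight: for each fixed value of the outer dual variable, the inner equation is precisely the root-finding problem of the ``$+$'' case of Theorem~\ref{theo:proxVrankr} and hence has a unique root, which, substituted back, turns the outer equation into the $r_2$-dimensional strongly monotone equation delivered by the first application; any solution of the coupled system must therefore coincide with this nested one. Second, the step you dismiss as routine bookkeeping is where the actual content is. Carrying out the recursion in the order you (and the paper) prescribe produces a first-stage shift of the form $P_1^{-1}U_2(\cdot)$ and a second-stage shift of the form $P^{-1}U_1(\cdot)$: the intermediate inverse $P_1^{-1}$ ends up multiplying the $U_2$ directions and $P^{-1}$ the $U_1$ directions, which is the \emph{opposite} pairing to the one displayed in the corollary. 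A direct verification of the optimality condition $V(x-p)\in\partial\f(p)$ shows that only the recursion's pairing is consistent: the cross term cancels through $PP_1^{-1}=\Id-Q_1P_1^{-1}$ precisely in that arrangement, whereas with the displayed pairing one is left with a residual $(Q_1+Q_2)P_1^{-1}Q_1(x-p)$ that does not vanish in general. So if you actually do the bookkeeping you will not land on the stated formulas; your write-up should either record the corrected index assignment (with whatever sign convention you adopt for the dual variables) or exhibit the nontrivial algebra that would reconcile the two, rather than assert it is routine.
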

\begin{proof}
  Corollary~\ref{cor:proxVrankr} follows from a recursive application of Theorem~\ref{theo:proxVrankr} to
   $\prox_\f^{V}$ with $V=P_1-Q_2$ and $\prox_\f^P$ with $P_1=P+Q_1$.
\end{proof}

\JF{As discussed above, depending on the structure of the proximity operator $\prox_{\f \circ P^{-1/2}}$, either general-purpose or specialized algorithms for solving the root-finding problem can be derived. In some situations, see e.g.,~Proposition~\ref{prop:proxVseprank1}, the root of the function $\lfun$ can be found exactly in linear time. If no special structure is available, however, one can appeal to some efficient iterative method to solve \eqref{eq:roothgenr} as we see now.}

%%%%%%%%%%%%%%%%%%%%%%%%%%%%%%%%%%%%%%%%%%%%%%%%%%%
\subsubsection{Semi-smooth Newton method} \label{subsec:semi-smooth-Newton}

\JF{We here turn to the semi-smooth Newton method to solve $\lfun(\alpha)=0$ (see \eqref{eq:roothgenr}) using the fact that $\lfun$ is Lipschitz-continuous and strongly monotone (Theorem~\ref{theo:proxVrankr}).

Since $\lfun: \RR^r \to \RR^r$ is Lipschitz continuous, it is so-called Newton differentiable~\cite{Chen2000}, i.e.,
there exists a family of linear mappings $\genjacl$ (called generalized Jacobians) such that for all $\alpha$ on an open subset of $\RR^r$
\[
\lim_{d \to 0} \frac{\norm{\lfun(\alpha+d) - \lfun(\alpha) - \genjacl(\alpha+d)d}}{\norm{d}} = 0 .
\]
%hence so-called Newton (slantly)
However, this is only of little help algorithmically unless one can construct a generalized Jacobian $\genjacl$ which is easily computable and provably invertible under our strong monotonicity assumption. This is why we turn to the semi-smoothness framework.

We shall write $\jacl(\alpha) \in \RR^{r \times r}$ for the usual Jacobian matrix whenever $\alpha$ is a point in the differentiability set $\Omega \subset \RR^r$ (its complement has measure zero by the celebrated Rademacher's theorem). The Clarke Jacobian of $\lfun$ at $\alpha \in \RR^r$ is defined as~\cite[Definition~2.6.1]{Clarke}
\[
\partial^C \lfun(\alpha) = \conv\setcond{G \in \RR^{r \times r}}{G = \lim_{\alpha_k \underset{\Omega}{\to} \alpha} \jacl(\alpha_k)} ,
\]
where $\conv$ is the convex hull and $\alpha_k \underset{\Omega}{\to} \alpha$ is a shorthand notation for $\alpha_k \to \alpha$ and $\alpha_k \in \Omega$. It is known, see \cite[Proposition~6.2.2]{Clarke}, that $\partial^C \lfun(\alpha)$ is a non-empty convex compact subset of $\RR^r$.

Semi-smooth functions (see~\cite[Definition~7.4.2]{FacchineiPang03}) are precisely (locally) Lipschitz continuous functions for which the Clarke Jacobians define a legitimate Newton approximation scheme in the sense of~\cite[Definition~7.2.2]{FacchineiPang03}. Here, we will even consider an inexact semi-smooth Newton method which is detailed in Algorithm~\ref{alg:semismoothnewton}.

\begin{algorithm}[H]
\caption{\label{alg:semismoothnewton}Semi-smooth Newton to solve $\lfun(\alpha)=0$}
\begin{algorithmic}[1]
	\REQUIRE A point $\alpha_0 \in \RR^n$.
	\FORALL{$k=0,1,2,\ldots$}
	\IF{$\lfun(\alpha_k) = 0$} stop. \label{alg:finitestep}
	\ELSE
	\STATE Select $G_k \in \partial^C \lfun(\alpha_k)$, compute $\alpha_{k+1}$ such that
	\[
	\lfun(\alpha_k) + G_k(\alpha_{k+1} - \alpha_k) = e_k,
	\]
	where $e_k \in \RR^r$ is an error term satisfying $\norm{e_k} \le \eta_k \norm{G_k}$ and $\eta_k \geq 0$.
	\ENDIF
	\ENDFOR
\end{algorithmic}  
\end{algorithm}

It remains now to identify a broad class of convex functions $\f$ to which Algorithm~\ref{alg:semismoothnewton} applies. A rich family will be provided by semi-algebraic functions, i.e., functions whose graph is defined by some Boolean combination of real polynomial equations and inequalities~\cite{coste2002intro}. An even more general family is that of definable functions on an o-minimal structure over $\RR$, which corresponds in some sense to an axiomatization of some of the prominent geometrical properties of semi-algebraic geometry~\cite{vandenDriesMiller96,coste1999omin}. A slightly more general notion is that of a tame function, which is a function whose graph has a definable intersection with every bounded box~\cite[Definition~2]{BolteSN09}. Given the variety of optimization problems that can be formulated within the framework of o-minimal structures, our convergence result for Algorithm~\ref{alg:semismoothnewton} will be stated for tame functions.

\begin{proposition}[Convergence of Algorithm~\ref{alg:semismoothnewton}]
\label{prop:generaldh-rank-r}
Consider the situation of Theorem~\ref{theo:proxVrankr}, where $\f$ is in addition a tame function. Then $\lfun$ is semi-smooth and all elements of $\partial^C \lfun(\alpha^\star)$ are non-singular. In turn there exists $\overline{\eta}$ such that if $\eta_k \leq \overline{\eta}$ for every $k$, there exists a neighborhood of $\alpha^\star$ such that for all $\alpha_0$ in that neighborhood, the sequence generated by Algorithm~\ref{alg:semismoothnewton} is well-defined and converges to $\alpha^\star$ linearly. If $\eta_k \to 0$, the convergence is superlinear.

In particular, if $\f$ is semi-algebraic and $e_k=0$, then there exists a rational number $q > 0$ such that
\[
\norm{\alpha_k - \alpha^\star} = O\parenth{\exp(-(1+q)^k)} .
\]
\end{proposition}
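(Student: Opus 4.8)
The plan is to establish the three claimed properties of the root-finding map $\lfun$ in sequence—semi-smoothness, nonsingularity of the Clarke Jacobian at the solution, and then invoke the general inexact semi-smooth Newton convergence theory—then specialize to the semi-algebraic case to obtain the explicit rate. First I would unpack the structure of $\lfun$ from \eqref{eq:roothgenr}: it is an affine pre- and post-composition of the proximity operator $\prox_\f^\matP$, namely $\lfun(\alpha) = U^\top\parenth{x - \prox^\matP_\f(x \mp \matP^{-1} U\alpha)} + \alpha$. Since semi-smoothness (in the sense of tameness-based Newton approximation) is preserved under composition with affine maps and under sums, it suffices to show that the map $\alpha \mapsto \prox^\matP_\f(x \mp \matP^{-1} U\alpha)$ is semi-smooth, and for this it is enough that $\prox_\f^\matP$ itself is semi-smooth.

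The key tame-geometry step is the following: by Lemma~\ref{lem:rel-prox-HV-to-H} the operator $\prox_\f^\matP$ is an affine conjugate of $\prox_{\f\circ\matP^{-1/2}}$, and since $\f$ is tame, so is $\f\circ\matP^{-1/2}$ (composition with a linear map preserves definability/tameness). The graph of the proximity operator is $\gra(\prox_{\wg}) = (\Id+\partial\wg)^{-1}$, which is definable whenever $\wg$ is, because the subdifferential of a tame function has a definable graph. Hence $\prox_{\f\circ\matP^{-1/2}}$ is a globally Lipschitz tame function, and I would invoke the main result of \cite{BolteSN09} (tame locally Lipschitz functions are semi-smooth) to conclude that $\prox_\f^\matP$, and therefore $\lfun$, is semi-smooth. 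This is the conceptual heart of the argument; the rest is machinery.

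Next I would prove that every $G \in \partial^C\lfun(\alpha^\star)$ is nonsingular. Here I would lean on the strong monotonicity established in Theorem~\ref{theo:proxVrankr}: $\lfun$ is strongly monotone with modulus $c>0$. Strong monotonicity of a locally Lipschitz map forces positive definiteness of its Clarke Jacobian elements in the symmetrized sense: for any $G \in \partial^C\lfun(\alpha^\star)$ and any $d$, one gets $\pds{d}{Gd} \geq c\norm{d}^2$ (this follows by a mean-value/limiting argument applied to the difference quotients defining $\partial^C\lfun$, using that each $\jacl(\alpha_k)$ arising in the limit inherits the strong-monotonicity lower bound). Positive definiteness of the symmetric part immediately yields nonsingularity of every such $G$. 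With semi-smoothness and the uniform nonsingularity of $\partial^C\lfun(\alpha^\star)$ in hand, the local linear convergence under $\eta_k \le \overline{\eta}$, and superlinear convergence when $\eta_k \to 0$, follow directly from the standard inexact semi-smooth Newton theorem \cite[Theorem~7.5.3 and its inexact variants]{FacchineiPang03}.

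For the final sharpened statement, I would specialize to $\f$ semi-algebraic with $e_k = 0$. In that case $\lfun$ is not merely semi-smooth but \emph{strongly} semi-smooth of some order governed by a rational exponent—semi-algebraic Lipschitz functions admit a Łojasiewicz-type modulus $\norm{\lfun(\alpha+d)-\lfun(\alpha)-Gd} = O(\norm{d}^{1+q})$ for some rational $q>0$, by the results relating semi-algebraic geometry to the order of semi-smoothness (\cite{BolteSN09}, building on \cite{QS93}). Feeding this Hölder modulus into the exact ($e_k=0$) semi-smooth Newton recursion gives $\norm{\alpha_{k+1}-\alpha^\star} = O(\norm{\alpha_k-\alpha^\star}^{1+q})$, and unrolling this superlinear-of-order-$(1+q)$ recurrence yields the doubly-exponential bound $\norm{\alpha_k-\alpha^\star} = O(\exp(-(1+q)^k))$. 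The main obstacle I anticipate is the verification that the Clarke Jacobian elements at $\alpha^\star$ inherit the strong-monotonicity constant cleanly—one must be careful that the limiting Jacobians $\jacl(\alpha_k)$ along differentiability sequences genuinely satisfy the same quadratic lower bound rather than only the monotone-in-the-limit property—but this is handled by passing to the limit in the difference-quotient inequality guaranteed by strong monotonicity on the full-measure differentiability set.
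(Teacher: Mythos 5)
Your proposal is correct and follows essentially the same route as the paper: tameness of $\f$ propagates to $\prox_{\f\circ P^{-1/2}}$ and hence to $\lfun$, semi-smoothness then follows from \cite{BolteSN09}, nonsingularity of every $G\in\partial^C\lfun(\alpha^\star)$ is extracted from the strong monotonicity modulus $c$ via limits of difference quotients on the differentiability set, and the convergence claims follow from the inexact semi-smooth Newton theory of \cite{FacchineiPang03} together with \cite[Theorem~2]{BolteSN09} in the semi-algebraic case. The only cosmetic difference is that you establish tameness of the proximity operator through definability of the graph $(\Id+\partial\f)^{-1}$, whereas the paper routes through the Moreau envelope and Lemma~\ref{lem:envlip}; both are standard applications of the stability of definable sets and yield the same conclusion.
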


The proof is in Section~\ref{appendix:prop:generaldh-rank-r}.

Proposition~\ref{prop:generaldh-rank-r} provides a remarkably fast local convergence guarantee of Algorithm~\ref{alg:semismoothnewton} to find the unique zero of $\lfun$ in \eqref{eq:roothgenr} provided one start sufficiently close to that zero. If this requirement is not met, the convergence of the algorithm is not ensured anymore. However we can say that $\norm{\alpha^\star} \leq \beta$, where the radius $\beta$ can be easily estimated from \eqref{eq:minalpha-rank-r}. For instance, for the metric $V = P+Q$, by strong convexity of modulus $c=1$ (see~Theorem~\ref{theo:proxVrankr}), we have
\[
\norm{\alpha^\star}^2/2 \leq \env{{\bpa{\f^*\circ \matP^{1/2}}}}{{1}}(\matP^{1/2}x) - \inf \env{{\bpa{\f^*\circ \matP^{1/2}}}}{{1}} + \frac 12 \norm{x}^2_{\matQ^+} .
\]
If $0 \in \dom(\f)$, we have the bound, valid for any $z \in \RR^N$,
\[
-\f(0) = \inf(\f^*) \le \f^* \circ \matP^{1/2}(p) \le \tfrac{1}{2}\norm{z-p}^2 + \f^* \circ \matP^{1/2}(p) = \env{{\bpa{\f^*\circ \matP^{1/2}}}}{{1}}(z) .
\] 
where we denoted $p=\prox_{\f^* \circ \matP^{1/2}}(z)$. Thus, setting $\beta=\env{{\bpa{\f^*\circ \matP^{1/2}}}}{{1}}(\matP^{1/2}x)+\frac 12 \norm{x}^2_{\matQ^+}+h(0)$, one can initialize Algorithm~\ref{alg:semismoothnewton} with $\alpha_0$ in the ball of radius $\beta$. An alternative way is to run e.g. an accelerated gradient descent (Nesterov or FISTA), initialized with such $\alpha_0$, a few iterations on the  strongly smooth problem~\eqref{eq:minalpha-rank-r} in $\RR^r$ (recall $r \ll N$), and use the final iterate as an initialization of Algorithm~\ref{alg:semismoothnewton}. Note that accelerated (FISTA-type) gradient descent is linearly convergent with the optimal rate $1-\sqrt{\mathrm{cond}^{-1}}$, where $\mathrm{cond}=(1+\opnorm{\matP^{-1/2}U}^2)/c$ is the condition number of problem~\eqref{eq:minalpha-rank-r} (see Theorem~\ref{theo:proxVrankr}).

}

%%%%%%%%%%%%%%%%%%%%%%%%%%%%%%%%%%%%%%%%%%%%%%%%%%%
\subsection{Diagonal $\pm$ rank-1 metric} \label{sec:prox-diag-pm-rank-1}

Here we deal with metrics of the form $V = D \pm uu^\top \in \sdp(N)$ which will be at the heart of our quasi-Newton splitting algorithm, where $D$ is diagonal with (strictly) positive diagonal elements $d_i$, and $u \in \RR^N$. %Obviously, for $V = D \pm uu^\top$, positive-definiteness is in force.

\subsubsection{General case}
%\begin{theorem}[Proximity operator in $\Hm_V$]
We start with the general case where $\f$ is any function in $\Gamma_0(\Hm)$.
\begin{theorem}[Proximity operator for a diagonal $\pm$ rank-1 metric]
\label{theo:proxVrank1}
Let $\f \in \Gamma_0(\Hm)$. Then,
\be
\label{eq:proxVgen}
\prox^V_\f(x) = D^{-1/2} \circ \prox_{\f \circ D^{-1/2}}\circ D^{1/2}(x \mp \alpha^\star D^{-1}u) ~,
\ee
where $\alpha^\star$ is the unique root of
\be  % SRB changing h(a) to \lfun(a) where \h can be defined. Prevent confusion with f+h model!
\label{eq:roothgen}
\lfun(\alpha) = \pds{u}{x - D^{-1/2} \circ \prox_{\f \circ D^{-1/2}}\circ D^{1/2}(x \mp \alpha D^{-1} u)} + \alpha ~,
\ee
which is a strongly increasing and Lipschitz continuous function on $\RR$ with Lipschitz constant $1+\sum_i u_i^2/d_i$.
\end{theorem}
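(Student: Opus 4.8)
The plan is to obtain Theorem~\ref{theo:proxVrank1} as the rank-one specialization of the already-established Theorem~\ref{theo:proxVrankr}. First I would set $\matP = D$ and $\matQ = uu^\top$, so that $\matQ$ has rank one, $r=1$, and the matrix $U=(u_1,\ldots,u_r)$ collapses to the single column $U = u \in \RR^{N\times 1}$. Under this identification the scalar $\alpha \in \RR$ plays the role of $\alpha \in \RR^r$, so that $\matP^{-1}U\alpha = \alpha D^{-1}u$ and $U^\top(\cdot) = \pds{u}{\cdot}$. Substituting these into \eqref{eq:proxVgenr} and \eqref{eq:roothgenr}, and using Lemma~\ref{lem:rel-prox-HV-to-H} in the form $\prox_\f^D = D^{-1/2}\circ\prox_{\f\circ D^{-1/2}}\circ D^{1/2}$, reproduces \eqref{eq:proxVgen} and \eqref{eq:roothgen} verbatim. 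The existence and uniqueness of the root $\alpha^\star$ then carries over directly from Theorem~\ref{theo:proxVrankr}.

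Next I would establish the two regularity claims on $\lfun$. For the Lipschitz constant, Theorem~\ref{theo:proxVrankr} supplies the value $1 + \opnorm{\matP^{-1/2}U}^2 = 1 + \opnorm{D^{-1/2}u}^2$. Because $U=u$ has a single column, the induced operator norm of $D^{-1/2}u$ coincides with its Euclidean norm, so $\opnorm{D^{-1/2}u}^2 = \|D^{-1/2}u\|^2 = \sum_i u_i^2/d_i$, which yields the stated constant $1 + \sum_i u_i^2/d_i$. For the monotonicity, I would note that for a scalar map $\lfun\colon\RR\to\RR$, strong monotonicity with modulus $c>0$ is precisely the statement that $\lfun$ is strongly increasing, since it reads $(\lfun(\alpha)-\lfun(\beta))(\alpha-\beta)\ge c(\alpha-\beta)^2$ for all $\alpha,\beta$.

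It then remains only to confirm $c>0$ in both sign cases. For $V = D+uu^\top$ this is immediate, since Theorem~\ref{theo:proxVrankr} gives $c=1$. For $V = D - uu^\top$, the same theorem gives $c = 1 - \opnorm{D^{-1/2}u}^2 = 1 - \sum_i u_i^2/d_i$, so I would invoke the standing hypothesis $V \in \sdp(N)$: congruence by $D^{-1/2}$ shows $D^{-1/2}VD^{-1/2} = \Id - (D^{-1/2}u)(D^{-1/2}u)^\top \succ 0$, which holds if and only if $\|D^{-1/2}u\|^2 = \sum_i u_i^2/d_i < 1$, whence $c>0$ and $\lfun$ is strongly increasing.

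My honest assessment is that there is no genuine obstacle here: the result is essentially a transcription of Theorem~\ref{theo:proxVrankr} to the case $U=u$, $\matP=D$. The only points that deserve an explicit line are the identification of the triple-bar operator norm with the Euclidean norm for a single-column $U$, and the use of the positive-definiteness of $V$ to guarantee $c>0$ (equivalently, to keep $\lfun$ strictly increasing) in the difference case $V = D-uu^\top$; both are routine.
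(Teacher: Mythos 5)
Your proposal is correct and follows exactly the route the paper takes: the paper states without further argument that Theorem~\ref{theo:proxVrank1} is the specialization of Theorem~\ref{theo:proxVrankr} to $\matP=D$, $\matQ=uu^\top$, $U=u$, and your write-up simply makes the routine identifications (operator norm of a single column equals its Euclidean norm, and $V\in\sdp(N)$ forcing $\sum_i u_i^2/d_i<1$ in the ``$-$'' case) explicit. No gap.
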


Theorem~\ref{theo:proxVrank1} is a specialization of Theorem~\ref{theo:proxVrankr}.
%The proof is in Section~\ref{appendix:thm:proxVrank1}.

\begin{remark}
{~}
%\vspace*{-0.2cm}
\begin{itemize}
\item \JF{There is a large class of functions for which $\prox_{\f \circ D^{-1/2}}$ can be computed either exactly or efficiently. The case of a separable function $\f$ will be considered in  Section~\ref{subsec:separable}}, 
\SB{ but the computation is efficient even for many non-separable functions such as the indicator of the simplex and the $\max$ function (see Table~\ref{tab:listOfFcn}), and many others.}
% NOTE: SB just rewording this slightly to sound better
%    . But this holds true for many non-necessarily separable functions, e.g., indicator of the simplex and the $\max$ function (see Table~\ref{tab:listOfFcn}), and many others.}
\item It is of course straightforward to compute $\prox^V_{\f^*}$ from $\prox^{V^{-1}}_\f$ either using Theorem~\ref{theo:proxVrank1}, or using this theorem together with Lemma~\ref{lem:moreauidentV} and the Sherman-Morrison inversion lemma. Indeed, when $V=D \pm uu^\top$ then $V^{-1}=D^{-1} \mp vv^\top$, where $v=D^{-1}u/\sqrt{1\pm\sum_{i}\tfrac{u_i^2}{d_i}}$. 
\item The formula for the inverse is also important for the forward step \eqref{eq:fbsr1} in Algorithm~\ref{alg:main}.
\item The theory developed in \cite{BF12} accounts for the proximity operator w.r.t. a metric $V=D+u u^\top$ (diagonal $+$ rank-1), which is extended here to the case $V=D\pm u u^\top$. Karimi and Vavasis \cite{KV17} developed an algorithm for solving the proximity operator of the (separable) $\ell_1$-norm with respect to a metric $V=D-u u^\top$, which is not covered in \cite{BF12}. The results in Theorems~\ref{theo:proxVrankr} and~\ref{theo:proxVrank1} are far-reaching generalizations that formalize the algorithmic procedure in \cite{KV17}.
\end{itemize}
\end{remark}

%%%%%%%%%%%%%%%%%%%%%%%%%%%%%%%%%%%%%%%%%%%%%%%%%%%
\SB{
\subsubsection{Bisection search} \label{subsec:rank1bisection}

We here discuss solving \eqref{eq:roothgen} via the bisection method in Algorithm~\ref{alg:bisection}, since this will allow us to produce a global complexity bound. The key tool is a bound on the values of $\alpha$ given by the following proposition which is valid even if $P$ is not diagonal.
    
\begin{proposition} \label{prop:bound}
For $r=1$, the root $\alpha^\star$ of~\eqref{eq:roothgen} lies in the set $[-\beta,\beta]$ where
\begin{equation} \label{eq:bound}
\beta = \|u\|\cdot\left( 2\|x\| + \norm{\prox^V_{\f}(0)} \right)
\end{equation}
where $\prox^V_{\f}(0)$ is a constant (e.g., it is zero if $0\in\argmin(\f)$, as it is for all positively homogeneous functions).
\end{proposition}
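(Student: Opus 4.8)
The plan is to turn the defining equation $\lfun(\alpha^\star)=0$ into a closed-form expression for $\alpha^\star$ and then estimate it by Cauchy--Schwarz. Specializing \eqref{eq:roothgen} to $r=1$ and using that, by Theorem~\ref{theo:proxVrank1} (equivalently Lemma~\ref{lem:rel-prox-HV-to-H}), the inner mapping $P^{-1/2}\circ\prox_{\f\circ P^{-1/2}}\circ P^{1/2}(x\mp\alpha^\star P^{-1}u)$ evaluated at the root is precisely $\prox^V_{\f}(x)$, the relation $\lfun(\alpha^\star)=0$ becomes
\[
\alpha^\star = \pds{u}{\prox^V_{\f}(x)-x}.
\]
An immediate Cauchy--Schwarz estimate gives $\abs{\alpha^\star}\le \norm{u}\,\norm{\prox^V_{\f}(x)-x}$, so the whole proposition reduces to the displacement bound $\norm{\prox^V_{\f}(x)-x}\le 2\norm{x}+\norm{\prox^V_{\f}(0)}$.

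To obtain that displacement bound I would introduce the fixed reference point $p_0:=\prox^V_{\f}(0)$ and split $\prox^V_{\f}(x)-x = \big(\prox^V_{\f}(x)-p_0\big) + \big(p_0 - x\big)$. The second term is handled by the triangle inequality, $\norm{p_0-x}\le\norm{x}+\norm{p_0}$, and for the first term the natural tool is the (firm) nonexpansiveness of the proximity operator, which should deliver $\norm{\prox^V_{\f}(x)-\prox^V_{\f}(0)}\le\norm{x}$; assembling the two pieces produces the constant $2\norm{x}+\norm{p_0}$ and hence $\beta=\norm{u}\big(2\norm{x}+\norm{\prox^V_{\f}(0)}\big)$.

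The delicate point — and the step I expect to be the main obstacle — is that $\prox^V_{\f}$ is firmly nonexpansive in the $\Hm_V$ metric rather than in the Euclidean metric, whereas $\beta$ is stated entirely in Euclidean norms; a naive passage between the two norms would inject the condition number of $V$ and spoil the clean constant. I would therefore recover the estimate from the quantity that is actually needed, namely $\pds{u}{\prox^V_{\f}(x)-\prox^V_{\f}(0)}$, rather than from a blanket norm bound. Concretely, I would exploit the optimality conditions $V(x-\prox^V_{\f}(x))\in\partial\f(\prox^V_{\f}(x))$ and $-Vp_0\in\partial\f(p_0)$, whose monotonicity yields $\pds{V(\prox^V_{\f}(x)-p_0)}{x}\ge\norm{\prox^V_{\f}(x)-p_0}_V^2\ge 0$, and then combine this with the rank-one structure $V=P\pm uu^\top$ to control the single scalar pairing against $u$. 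Care is genuinely required here, since simple skewed-projection examples show that $\prox^V_{\f}$ can be Euclidean-expansive, so the constant must come out of the pairing with $u$ and the sign structure of $V$, not from an off-the-shelf nonexpansiveness inequality.

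Finally I would record the two observations in the statement: $\prox^V_{\f}(0)$ is a constant independent of $x$, so $\beta$ depends on $x$ only through $\norm{x}$; and if $0\in\argmin(\f)$ then $z=0$ minimizes $\f(z)$ and $\tfrac12\norm{z}^2_V$ simultaneously, whence $\prox^V_{\f}(0)=0$ and $\beta=2\norm{u}\,\norm{x}$. This last case covers every positively homogeneous $\f$, in particular all norms, since there $0\in\partial\f(0)$.
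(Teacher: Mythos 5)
Your opening reduction is correct and is in fact where the paper's own proof ends up: setting $p=\prox^V_\f(x)$, the root equation gives $\alpha^\star=\pds{u}{p-x}$ directly (the paper reaches the same identity less directly, by computing the optimal dual variable $w^\star=\mp P^{-1/2}Q(p-x)$ from the Fenchel/Toland extremality relations and then using $Q=uu^\top$). From that point both you and the paper need $\norm{p-x}\le 2\norm{x}+\norm{\prox^V_\f(0)}$, and the paper obtains it by exactly the step you distrust: it writes ``by non-expansivity of the proximal mapping, $\norm{p}\le\norm{x}+\norm{\prox^V_\f(0)}$'', i.e.\ it invokes Euclidean non-expansiveness of $\prox^V_\f$, which, as you observe, is only guaranteed in the $\Hm_V$ metric. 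So the obstacle you flag is not an artifact of your route; it is the load-bearing step of the published proof, asserted there without justification.

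Your proposed repair (monotonicity of $\partial\f$ combined with the rank-one structure of $V$) is only sketched, and it cannot be completed in general, because the statement itself fails in the $V=D-uu^\top$ case. Take $N=2$, $D=\Id$, $\f=\indic_C$ with $C=\RR e_1$, $x=(0,1)$, and $u=(u_1,u_2)$ with $u_1^2=1-\eps$ and $u_2^2=\eps/2$, so that $\norm{u}^2=1-\eps/2<1$ and $V=\Id-uu^\top\in\sdp(2)$. Here $\prox_{\f\circ D^{-1/2}}=\proj_C\colon z\mapsto (z_1,0)$, and \eqref{eq:roothgen} (minus case) becomes
\[
\lfun(\alpha)=\pds{u}{x-\proj_C(x+\alpha u)}+\alpha=\pds{u}{(-\alpha u_1,\,1)}+\alpha=(1-u_1^2)\,\alpha+u_2 ,
\]
whose root is $\alpha^\star=-u_2/(1-u_1^2)=-1/\sqrt{2\eps}$, while $\prox^V_\f(0)=0$ and $\beta=2\norm{u}<2$. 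For small $\eps$ the root leaves $[-\beta,\beta]$ by an arbitrarily large factor, and indeed $\norm{p}=\abs{\alpha^\star}\,u_1\gg\norm{x}$, so the paper's non-expansiveness claim fails concretely. The gap in your proposal therefore coincides with a genuine error in the paper; any correct bound must involve the conditioning of $V$, for instance $\abs{\alpha^\star}\le\abs{\lfun(0)}/c=\abs{\pds{u}{x-\prox^P_\f(x)}}/c$ with $c$ the strong-monotonicity modulus of Theorem~\ref{theo:proxVrankr} ($c=1-\opnorm{P^{-1/2}u}^2$ in the minus case), which is the kind of estimate your monotonicity argument would actually deliver. For $V=D+uu^\top$ I see no counterexample, but your argument would still need the missing ingredient written out: the natural inequality $\pds{u}{z}^2\le\norm{z}_V^2$ bounds $\abs{\pds{u}{p-\prox^V_\f(0)}}$ by $\norm{x}_V$, not by $\norm{u}\norm{x}$, so even there the clean constant $2\norm{u}\norm{x}$ does not follow.
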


The proof is in Section \ref{appendix:prop:bound}.

\begin{proposition}[Convergence of Algorithm~\ref{alg:bisection}]  \label{prop:bisectionConverges}
For any $\epsilon>0$, Algorithm~\ref{alg:bisection} will produce a point $\alpha$ such that $|\alpha - \alpha^\star| \le \epsilon$ in $\log_2 \left(\epsilon/(2c\beta)\right)$ steps, where $\beta$ is as in~\eqref{eq:bound}, and $c$ is the strong monotonicity modulus given in Theorem~\ref{theo:proxVrankr}.
\end{proposition}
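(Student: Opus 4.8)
The plan is to establish the convergence rate of the bisection method (Algorithm~\ref{alg:bisection}) for finding the root $\alpha^\star$ of the strongly monotone scalar function $\lfun$ from \eqref{eq:roothgen}. The key structural facts I would use are that $\lfun$ is strongly monotone (here \emph{strongly increasing}, since $r=1$) with modulus $c$ by Theorem~\ref{theo:proxVrankr}, and that by Proposition~\ref{prop:bound} the root $\alpha^\star$ is guaranteed to lie in the bracketing interval $[-\beta,\beta]$ with $\beta$ given by \eqref{eq:bound}. First I would recall the elementary geometric contraction property of bisection: starting from an interval $[a_0,b_0]=[-\beta,\beta]$ of length $b_0-a_0 = 2\beta$, each bisection step halves the length of the bracketing interval, so after $n$ steps the interval $[a_n,b_n]$ containing $\alpha^\star$ has length $(b_n-a_n) = 2\beta/2^n = \beta \cdot 2^{1-n}$.

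The second step is to argue that bisection is applicable at all: because $\lfun$ is strongly increasing (hence strictly monotone) and continuous, it has a \emph{unique} sign change on $[-\beta,\beta]$, namely at $\alpha^\star$, so the sign tests used to select the subinterval in Algorithm~\ref{alg:bisection} reliably keep $\alpha^\star$ inside the current bracket. This guarantees that the midpoint $\alpha_n$ of $[a_n,b_n]$ satisfies $|\alpha_n - \alpha^\star| \le (b_n-a_n)/2 = \beta\,2^{-n}$. The role of the strong monotonicity modulus $c$ is to convert a guarantee on the \emph{function value} (a residual $|\lfun(\alpha)|$ small) into a guarantee on the \emph{iterate} $|\alpha - \alpha^\star|$, or vice versa: by strong monotonicity, $|\lfun(\alpha)-\lfun(\alpha^\star)| \ge c\,|\alpha-\alpha^\star|$, and since $\lfun(\alpha^\star)=0$ this gives $|\alpha-\alpha^\star| \le |\lfun(\alpha)|/c$. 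Depending on whether Algorithm~\ref{alg:bisection}'s stopping criterion is phrased in terms of the interval width or the residual, the factor $c$ appears in the final count; I would trace which quantity the stated bound $\log_2(\epsilon/(2c\beta))$ refers to and insert $c$ accordingly.

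The final step is the arithmetic of solving for the number of iterations. Requiring $|\alpha_n-\alpha^\star|\le\epsilon$ and using the width bound together with the modulus $c$, I would set $\beta\,2^{-n} \le c\,\epsilon$ (or an equivalent inequality matching the algorithm's exact test), and solve for $n$: taking $\log_2$ of both sides gives $n \ge \log_2(\beta) - \log_2(c\epsilon) = \log_2\!\bpa{\beta/(c\epsilon)}$, which rearranges to the claimed count $\log_2\!\bpa{\epsilon/(2c\beta)}$ up to the sign and the factor of $2$ that comes from the initial interval having half-width $\beta$. I would note that the quantity $\log_2(\epsilon/(2c\beta))$ is negative for small $\epsilon$, so the intended reading is $\lceil \log_2(2c\beta/\epsilon)\rceil$ iterations; I would present the bound in whichever normalization is consistent with the statement and flag the trivial absolute value.

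The main obstacle I anticipate is purely bookkeeping rather than conceptual: reconciling the exact placement of the factors $2$, $c$, and the sign inside the logarithm with however Algorithm~\ref{alg:bisection}'s stopping test and midpoint convention are literally written. The mathematical content—bisection halves the bracket, and strong monotonicity with modulus $c$ relates residual to iterate error—is standard and short; the care lies in making the displayed formula $\log_2(\epsilon/(2c\beta))$ come out with the correct constants, and in confirming that $\beta$ from Proposition~\ref{prop:bound} indeed furnishes a valid initial bracket so that the first bisection step is legitimate.
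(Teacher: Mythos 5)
Your proposal is correct and follows essentially the same route as the paper: the paper's own proof is a two-line remark observing that $\lfun$ is one-dimensional and strongly monotone, hence monotonically increasing so bisection applies, and that strong monotonicity gives $|\lfun(\alpha)| \ge c\,|\alpha-\alpha^\star|$ to relate residual to iterate error. Your additional bookkeeping about the halving of the bracket $[-\beta,\beta]$ and your observation that the displayed count $\log_2(\epsilon/(2c\beta))$ must be read up to sign (i.e., as $\log_2(2c\beta/\epsilon)$) are both consistent with, and slightly more explicit than, what the paper records.
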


The proof of the above proposition is immediate, since $\lfun$ is a strongly monotone operator and one-dimensional, hence $\lfun$ is a monotonically increasing function, and thus the bisection method works. Strong monotonicity implies that for all $\alpha \in \RR$, $|\lfun(\alpha)| \geq c |\alpha-\alpha^\star|$. 

The bisection procedure is outlined in Algorithm \ref{alg:bisection}; note that later we will provide Algorithm \ref{alg:root-pl-sep} which is a specialization of bisection to a special class of functions $h$ for which we can find the root with zero error (assuming exact arithmetic). Note that a variant of Proposition~\ref{prop:bound} holds when $r > 1$ (see end of Section~\ref{subsec:semi-smooth-Newton}), but there is no analog to the bisection method in dimension $r>1$ since there is no total order.

% NOTE: if we have alpha is epsilon sub-optimal,
% 	can we then bound how sub-optimal p is?
% 	Probably, but not going to now...    
    
\begin{algorithm}
   \caption{\label{alg:bisection}Bisection method to solve $\lfun(\alpha)=0$ when $r=1$}
   \begin{algorithmic}[1]
       \REQUIRE Tolerance $\epsilon>0$
       \STATE Compute the bound $\beta$ from \eqref{eq:bound}, and set $k=0$
       \STATE Set $\alpha_{-} = -\beta$ and $\alpha_{+}=\beta$
       \FORALL{$k=0,1,2,\ldots$}
           
            \STATE Set $\alpha_{k} = \frac{1}{2}\left( \alpha_- + \alpha_+\right)$
        \IF{ $\lfun(\alpha_k) > 0$ }
            \STATE $\alpha_+ \gets \alpha_k$
        \ELSE
            \STATE $\alpha_- \gets \alpha_k$
        \ENDIF
        \IF{$k>1$ and $|\alpha_k - \alpha_{k-1}|<\epsilon$} 
            \RETURN $\ \alpha_k$
        \ENDIF
       \ENDFOR
        \end{algorithmic}  
    \end{algorithm}

}

%%%%%%%%%%%%%%%%%%%%%%%%%%%%%%%%%%%%%%%%%%%%%%%%%%%
\subsubsection{Separable case}\label{subsec:separable}
The following corollary states that the proximity operator takes an even more convenient form when $\f$ is separable. It is a specialization of Theorem~\ref{theo:proxVrank1}.
\begin{corollary}[Proximity operator for a diagonal $\pm$ rank-1 metric for separable functions]
\label{cor:proxVseprank1}
Assume that $\f \in \Gamma_0(\Hm)$ is separable, i.e. $\f(x)=\sum_{i=1}^N \f_i(x_i)$, and $V = D \pm uu^\top \in \sdp(N)$, where $D$ is diagonal with (strictly) positive diagonal elements $d_i$, and $u \in \RR^N$. Then
\be
\label{eq:proxVsep}
 \prox^V_{\f}(x)  = \parenth{\prox_{\f_i/d_i}(x_i \mp \alpha^\star u_i/d_i) }_{i=1}^N~,
\ee
where $\alpha^\star$ is the unique root of
\be
\label{eq:roothsep}
\lfun(\alpha) = \pds{u}{x - \parenth{\prox_{\f_i/d_i}(x_i \mp \alpha u_i/d_i)}_{i=1}^N} + \alpha ~,
\ee
which is a Lipschitz continuous and strongly increasing function on $\RR$.
\end{corollary}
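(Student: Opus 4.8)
The plan is to specialize Theorem~\ref{theo:proxVrank1} to the separable setting, where the only nontrivial point is to show that $\prox_{\f \circ D^{-1/2}}$ decouples coordinatewise and that the composition with $D^{\pm 1/2}$ collapses into the simple scalar proximity operators $\prox_{\f_i/d_i}$. First I would observe that because $D$ is diagonal with entries $d_i>0$, the map $D^{-1/2}$ acts coordinatewise as multiplication by $d_i^{-1/2}$, so $(\f \circ D^{-1/2})(z) = \sum_{i=1}^N \f_i(z_i/\sqrt{d_i})$ is itself separable. The proximity operator of a separable function is the concatenation of the scalar proximity operators of its summands, hence
\[
\bpa{\prox_{\f \circ D^{-1/2}}(z)}_i = \prox_{\f_i(\cdot/\sqrt{d_i})}(z_i) ~.
\]

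Next I would simplify the inner scalar proximity operator. Sandwiching between $D^{1/2}$ on the right and $D^{-1/2}$ on the left, the $i$-th coordinate of $D^{-1/2} \circ \prox_{\f \circ D^{-1/2}}\circ D^{1/2}(w)$ equals $d_i^{-1/2}\,\prox_{\f_i(\cdot/\sqrt{d_i})}(\sqrt{d_i}\,w_i)$. A routine change-of-variables identity for the scalar proximity operator, namely $\prox_{\psi(\lambda \cdot)}(y)=\lambda^{-1}\prox_{\lambda^2 \psi}(\lambda y)$ applied with $\lambda = 1/\sqrt{d_i}$, shows this simplifies exactly to $\prox_{\f_i/d_i}(w_i)$. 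Substituting the argument $w = x \mp \alpha^\star D^{-1}u$, whose $i$-th coordinate is $x_i \mp \alpha^\star u_i/d_i$, yields the componentwise formula~\eqref{eq:proxVsep}.

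Finally, the root-finding equation~\eqref{eq:roothsep} follows by inserting the same componentwise simplification into the definition~\eqref{eq:roothgen} of $\lfun$ from Theorem~\ref{theo:proxVrank1}, replacing the inner term $D^{-1/2} \circ \prox_{\f \circ D^{-1/2}}\circ D^{1/2}(x \mp \alpha D^{-1}u)$ by the vector $\parenth{\prox_{\f_i/d_i}(x_i \mp \alpha u_i/d_i)}_{i=1}^N$. The Lipschitz continuity and strong monotonicity (strict increase) of $\lfun$, as well as the existence and uniqueness of the root $\alpha^\star$, are inherited verbatim from Theorem~\ref{theo:proxVrank1}, since $\lfun$ is literally the same function, only rewritten in separable form. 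I expect the only mild obstacle to be the bookkeeping of the scalar scaling identity for $\prox_{\f_i/d_i}$ and the consistent tracking of the $\mp$ sign through the diagonal rescalings; everything else is a direct transcription of the rank-$1$ theorem.
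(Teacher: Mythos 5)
Your proposal is correct and follows exactly the route the paper intends: the paper states only that the corollary ``is a specialization of Theorem~\ref{theo:proxVrank1},'' and your argument supplies precisely the routine details of that specialization (separability of $\f\circ D^{-1/2}$, the coordinatewise proximity operator via Lemma~\ref{lem:proxcalc}, and the scaling identity collapsing $d_i^{-1/2}\prox_{\f_i(\cdot/\sqrt{d_i})}(\sqrt{d_i}\,\cdot)$ to $\prox_{\f_i/d_i}$). The inheritance of Lipschitz continuity and strong monotonicity of $\lfun$ from the theorem is likewise immediate, as you note.
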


In particular, when the proximity operator of each ${\f}_i$ is piecewise affine, we get the following.
\begin{proposition}
\label{prop:proxVseprank1}
Consider the situation of Corollary~\ref{cor:proxVseprank1}. Assume that for $1 \leq i \leq N$, $\prox_{{\f}_i/d_i}$ is piecewise affine on $\RR$ with $k_i \geq 1$ segments, \ie 
\begin{equation}
\label{eq:proxpieceaffine}
%\prox_{\f_i/d_i}(x_i) = a^i_j x_i + b^i_j,  \quad t^i_j \leq x_i \leq t^i_{j+1},\ j \in \{0,\ldots,k_i+1\}~,
  \prox_{\f_i/d_i}(x_i) =
  \begin{cases}
  a^i_0 x_i + b^i_0, &\text{if}\ x_i \leq t^i_{1}~; \\
  a^i_j x_i + b^i_j, &\text{if}\ t^i_j \leq x_i \leq t^i_{j+1},\ j \in \{1,\ldots,k_i\}~; \\
  a^i_{k_i+1} x_i + b^i_{k_i+1}, &\text{if}\ t^i_{k_i+1}\leq x_i~,
  \end{cases}
\end{equation}
for some $a^i_j,b^i_j\in \R$, and define $t_0^i:=-\infty$ and $t^i_{k_i+2}:=+\infty$. Then $\prox^V_\f(x)$ can be obtained exactly using Algorithm~\ref{alg:root-pl-sep} with binary search for Step~\ref{alg:root-pl-sep-step-rootfinding} in $O(K\log(K))$ steps where $K=\sum_{i=1}^N k_i$.% \geq N$.
\end{proposition}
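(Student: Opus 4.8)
The plan is to exploit the piecewise-affine structure that $\lfun$ inherits from the scalar proximity operators $\prox_{\f_i/d_i}$, so that locating the unique root $\alpha^\star$ of \eqref{eq:roothsep} reduces to sorting a list of breakpoints and then solving a single scalar linear equation. First I would note that for each $i$ the map $\alpha \mapsto x_i \mp \alpha u_i/d_i$ is affine in $\alpha$; composing it with the piecewise-affine function $\prox_{\f_i/d_i}$ of \eqref{eq:proxpieceaffine} therefore yields a function of $\alpha$ that is again piecewise affine and continuous. Summing these $N$ terms against the weights $u_i$ and adding the linear term $\alpha$ shows, via \eqref{eq:roothsep}, that $\lfun$ is continuous and piecewise affine on $\RR$. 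Its breakpoints are exactly those $\alpha$ at which some argument $x_i \mp \alpha u_i/d_i$ crosses a knot $t^i_j$: for $u_i \neq 0$ each of the knots $t^i_1,\ldots,t^i_{k_i+1}$ contributes one value $\alpha^i_j$ (solving $x_i \mp \alpha^i_j u_i/d_i = t^i_j$), while a component with $u_i=0$ contributes neither a breakpoint nor any $\alpha$-dependence. Hence the number of breakpoints is at most $\sum_{i=1}^N (k_i+1) = K+N = O(K)$.

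Next I would bring in Corollary~\ref{cor:proxVseprank1} (which descends from Theorem~\ref{theo:proxVrank1}): $\lfun$ is continuous and strongly increasing, so it has a unique root $\alpha^\star$ and is affine with strictly positive slope on each of the $O(K)$ intervals cut out by the sorted breakpoints. Algorithm~\ref{alg:root-pl-sep} then (i) forms all breakpoints $\alpha^i_j$, (ii) sorts them, and (iii) runs a binary search (Step~\ref{alg:root-pl-sep-step-rootfinding}) over the sorted list, using monotonicity to locate the unique segment $[\alpha_-,\alpha_+]$ on which $\lfun$ changes sign; one evaluation of $\lfun$ via \eqref{eq:roothsep} costs $O(N)$, being a sum of $N$ scalar proximal evaluations. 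On this last segment $\lfun(\alpha)=a\alpha+b$ with known $a>0$ and $b$, so $\alpha^\star=-b/a$ is obtained exactly in exact arithmetic, and $\prox^V_\f(x)$ follows from \eqref{eq:proxVsep} by one further $O(N)$ evaluation.

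For the running time I would tally: generating the $O(K)$ breakpoints is $O(K)$; sorting them is $O(K\log K)$; the binary search performs $O(\log K)$ evaluations of $\lfun$, each costing $O(N)$, for $O(N\log K)$; and the two proximal sweeps cost $O(N)$ each. Because each $k_i \geq 1$ forces $N \leq K$, every term is dominated by the sort, giving the claimed $O(K\log K)$ bound.

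The main difficulties will be organizational rather than analytic. Existence and uniqueness of the sign change are already guaranteed by continuity and strong monotonicity, so the substantive points are: verifying that the slope of $\lfun$ on each affine piece is strictly positive — which follows from strong monotonicity (Theorem~\ref{theo:proxVrank1}), since a piece of slope $a$ satisfies $a \geq c > 0$ for the modulus $c$, ensuring a well-defined sign-change test — and handling degeneracies, namely coincident breakpoints, vanishing components $u_i$, and the half-open end segments with $t^i_0=-\infty$ and $t^i_{k_i+2}=+\infty$, so that the sorted search stays correct and within the stated complexity. Care is also needed to maintain, or recompute in $O(N)$, the local slope--intercept pair $(a,b)$ on the final segment so that the exact scalar solve is legitimate.
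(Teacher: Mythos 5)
Your proposal is correct and follows essentially the same route as the paper's proof: establish that $\lfun$ is continuous, piecewise affine and strictly (indeed strongly) increasing, sort the $O(K)$ breakpoints in $O(K\log K)$, locate the sign-change interval by binary search at $O(N)$ per evaluation of $\lfun$, and solve the resulting scalar affine equation exactly. Your additional observations (the slope bound $a\ge c>0$ from strong monotonicity, the treatment of $u_i=0$, and the remark that $k_i\ge 1$ forces $N\le K$ so the sort dominates) are all consistent with, and slightly more explicit than, the paper's argument.
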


The proof is in Section~\ref{appendix:prop:proxVseprank1}.
%\JF{Correct this proposition and its proof following our discussion.}

% SRB adding this intro line: (1/21/18)
Using Proposition \ref{prop:proxVseprank1}, we derive Algorithm~\ref{alg:root-pl-sep}.
\begin{algorithm}[H]
    \caption{\label{alg:root-pl-sep}Exact root finding algorithm for piecewise affine separable proximity operators}
\begin{algorithmic}[1]
  \REQUIRE Piecewise affine proximity operator $\prox_{\f_i/d_i}(x_i)$, $i=1,\ldots, N$, as defined in Proposition~\ref{prop:proxVseprank1}.
  \STATE Sort $\widetilde{\bm\theta}:=\bigcup_{i=1}^N \{ \pm \tfrac{d_i}{u_i}(x_i - t^i_j)  \,:\, j=1,\ldots,k_i \}\subset\R$ into a list $\bm\theta\in\R^{k^\prime}$ with $k^\prime\leq K$. \label{alg:root-pl-sep-sort-bpts}
  \STATE Set $\overline{\bm\theta} := [-\infty, \bm\theta_1,\ldots,\bm\theta_{k^\prime},+\infty]$.
  \STATE Via the bisection method, detect the interval $[\theta_-,\theta_+)$ with adjacent $\theta_-,\theta_+\in\overline{\bm\theta}$ that contains the root of $\lfun(\alpha)$. \label{alg:root-pl-sep-step-rootfinding}
  \STATE Compute the root $\alpha^\star=-b/a$ where $a$ and $b$ are determined as follows:
  \STATE For all $i=1,\ldots,N$, define $j_i\in\{0,\ldots,k_i+1\}$ such that $t^i_{j_i}\leq \theta_-<\theta_+\leq  t^i_{j_i+1}$, and compute \label{alg:root-pl-sep-aff-accum} 
   \[
     a:= 1\pm\sum_{i=1}^N  a^i_{j_i} u_i^2/d_i
     \quad\text{and}\quad 
     b:=\sum_{i=1}^N u_i ((1- a^i_{j_i}) x_i - b^i_{j_i})\,.
   \]
\end{algorithmic}  
\end{algorithm}

Some remarks are in order.

\begin{remark} 
{~} %\vspace*{-0.2cm}
\begin{itemize}
  \item The sign ``$\pm$'' in Algorithm~\ref{alg:root-pl-sep} refers to the two cases of $V=D\pm u u^\top$ from Corollary~\ref{cor:proxVseprank1}.
  \item Since \eqref{eq:roothsep} is piecewise affine, $a,b$ in Step~\ref{alg:root-pl-sep-aff-accum} for the interval $[\theta_-,\theta_+)$, can be determined by 
  \[
    a=\frac{\lfun(\theta_+^\prime)-\lfun(\theta_-^\prime)}{\theta_+^\prime-\theta_-^\prime}
    \quad\text{and}\quad
    b = \lfun(\theta_-^\prime)~\,
  \]
  where $\theta_-\leq \theta_-^\prime < \theta_+^\prime\leq \theta_+$ and $-\infty<\theta_-^\prime$ and $\theta_+^\prime<+\infty$. (The usage of ``$\theta^\prime$'' avoids ``$\lfun(-\infty)$''.)
\end{itemize}
\end{remark}

\begin{remark}
\label{prop:proxVseprank1linear}
{~} %\vspace*{-0.2cm}
\begin{itemize}
\item The bulk of complexity in Proposition~\ref{prop:proxVseprank1} lies in locating the appropriate breakpoints. This can be achieved straightforwardly by sorting followed by a bisection search, as advocated, whose worst-case computational complexity is nearly linear in $N$ up to a logarithmic factor. The log term can theoretically be removed by replacing sorting with a median-search-like procedure whose expected complexity is linear.
\item The above computational cost can be reduced in many situations by exploiting, e.g., symmetry of the $\f_i's$, identical functions, \etc This turns out to be the case for many functions of interest, \eg $\ell_1$-norm, indicator of the $\ell_\infty$-ball or the positive orthant, polyhedral seminorms, and many others; see examples hereafter. 
\item \JF{It goes without saying that Corollary~\ref{cor:proxVseprank1} can be extended to the ``block'' separable case (i.e.~separable in subsets of coordinates).}
%JF: "when $D$ is constant along the same block of indices:". This is not always mandatory, e.g., the case of the $\ell_2$ norm for which the problem would boil down to orthogonally project onto an ellipsoid, which necessitates to solve for the scalar Lagrange multiplier using separability of the squared $\ell_2$ norm.
%As a typical example, one can think of mixed $\ell_p-\ell_1$ sparsity penalty.
\item \JF{It is important to stress the fact that the reasoning underlying Proposition~\ref{prop:proxVseprank1} and Algorithm~\ref{alg:root-pl-sep} extends to a much more general class of proximity operators $\prox_{\f_i}$, hence functions $f_i \in \Gamma_0(\RR)$. Indeed, assume that $\f_i$ is definable (see Section~\ref{subsec:semi-smooth-Newton} for details on definable functions). Thus arguing as in the proof of Proposition~\ref{prop:generaldh-rank-r}, we have that $\prox_{\f_i}$ is also definable. It then follows from the monotonicity lemma~\cite[Theorem~4.1]{vandenDriesMiller96} that for any $k \in \NN$, one can always find a finite partition $(t^i_j)_{1 \leq j \leq k_i}$ into $k_i$ disjoint intervals such that $\prox_{\f_i}$ restricted to each nontrivial interval is $C^k$ and strictly increasing or constant. With such a partition, the right-hand side of \eqref{eq:proxpieceaffine}~may be non-linear in $x_i$ but $C^k$ and increasing on the corresponding open interval. Consequently, the first three steps of Algorithm~\ref{alg:root-pl-sep}, which consist in locating the appropriate interval $[\theta_-,\theta_+)$ that contains the unique root $\alpha^\star$, remain unchanged. If $\alpha^\star \neq \theta_{\pm}$, only step~\ref{alg:root-pl-sep-aff-accum}, which computes $\alpha^\star$, has to be changed to any root finding method of a one-dimensional non-linear $C^k$ smooth function on $(\theta_-,\theta_+)$. For instance, we have shown that $\alpha^\star$ is a non-degenerate root ($\lfun$ is strictly increasing). Therefore, if $k=2$, then $\lfun \in C^2((\theta_-,\theta_+))$, and a natural root-finding scheme would be the Newton method which provides local quadratic convergence to $\alpha^\star$. More generally, if $k \geq 2$, local higher order convergence rate can be obtained with the Householder's class of methods.}
\item \JF{In view of the previous two remarks, the case of the $\ell_1-\ell_2$ norm, which is popularly used to promote group sparsity, can be handled by our framework. This example will be considered in more detail in Section~\ref{sec:examples}.}
\end{itemize}
\end{remark}

%%%%%%%%%%%%%%%%%%%%%%%%%%%%%%%%%%%%%%%%%%%%%%%%%%%
\subsubsection{Examples} \label{sec:examples}
Many functions can be handled very efficiently using our results above. For instance, Table~\ref{tab:listOfFcn} summarizes a few of them where we can obtain either an exact answer by sorting when possible, or else by minimizing w.r.t. to a scalar variable (\ie finding the unique root of \eqref{eq:roothgen}).

\begin{table}
    \centering
    \footnotesize
    \begin{tabular}{ll}
        \toprule
        Function $\f$ & Method \\
        \midrule
$\ell_1$-norm (separable) 	& exact with sorting \\
Hinge (separable)		& exact with sorting \\
Box constraint (separable) 	& exact with sorting \\
$\ell_\infty$-ball (separable)	& exact with sorting \\
Positivity constraint (separable) & exact with sorting \\
$\ell_1-\ell_2$ (block-separable)& sort and root finding \\
Affine constraint (nonseparable)& closed-form \\
$\ell_1$-ball (nonseparable)	& root-finding and $\prox_{\f \circ D^{-1/2}}$ costs a sort \\
$\ell_\infty$-norm (nonseparable) & from projector on the $\ell_1$-ball by Moreau-identity \\
Simplex (nonseparable)		& root-finding and $\prox_{\f \circ D^{-1/2}}$ costs a sort \\
$\max$ function (nonseparable)	& from projector on the simplex by Moreau-identity \\
        \bottomrule
    \end{tabular}
    \caption{A few examples of functions which have efficiently computable proximity operators in the metric $V=D \pm uu^\top$.}
    \label{tab:listOfFcn}
\end{table}

\paragraph*{Affine constraint} We start with a case where the proximity operator in the diagonal $\pm$ rank 1 metric has a closed-form expression. Consider the case where $\f=\indic_{\{x: A x = b\}}$. We then immediately get
\[
\prox_{\f \circ D^{-1/2}}(z) = z + Y^+(b - Y z) = \Pi z + c 
\]
where $Y=AD^{-1/2}$, $\Pi$ is the projector on $\Ker(Y)=D^{1/2}\Ker(A)$, and $c=Y^+ b$. After simple algebra, it follows from Theorem~\ref{theo:proxVrank1}, that the unique root of $\lfun$ in this case is
\[
\alpha^\star = \frac{\pds{u}{D^{-1/2}\parenth{c - (\Id - \Pi)D^{1/2}x}}}{1 \pm \pds{u}{D^{-1/2}\Pi D^{-1/2}u}} ~.
\]

\paragraph*{Positive orthant} We now put Proposition~\ref{prop:proxVseprank1} on a more concrete footing by explicitly covering the case when $h$ represents non-negativity constraints. %Without of loss of generality, 
 Consider $V=D+uu^\top$ and $\f = \indic_{\{x:\,x\ge 0\}}$. We will calculate
\be  \label{eq:nnlsEx}
\prox_{\f}^{V^{-1}}(x) = \argmin_{ y \ge 0} \frac{1}{2}\|y-x\|_{V^{-1}}^2
\ee
%Since we work with $V^{-1}$ and not $V$, we will not use $\lfun(\alpha)$ but rather $\lfun(\alpha)$ which will be used in a similar way to $\h$.  

%If $(y,\lambda)$ is a primal-dual solution to \eqref{eq:nnlsEx}, the KKT conditions must be satisfied:
%\be \label{eq:KKT}
% y \ge 0, \lambda \ge 0, \quad y^\top\lambda = 0, \quad y = x + (D+uu^\top)\lambda
%\ee
We use the fact that the projector on the positive orthant is separable with components $\parenth{x_i}_+ \eqdef \max(0,x_i)$, i.e. a piecewise affine function.  Define the scalar $\alpha = u^\top\lambda$. 
%The key observation is that if $\alpha$ is known,
%then the problem is solved since it becomes separable and the solution is
%$$ y_i = \parenth{x_i + \alpha u_i}_+, \quad \lambda_i = \parenth{-(x_i + \alpha u_i)/d_i}_+,\, i=1,\ldots,N
%$$
%where $\parenth{x_i}_+ \eqdef \max(0,x_i)$.
Let $\lambda_i^{(\alpha)} \eqdef \parenth{-(x_i + \alpha u_i)/d_i}_+$, so we search for a value of $\alpha$ such that $\alpha = u^\top\lambda^{(\alpha)}$, or in other words,
a root of $\lfun(\alpha) = \alpha -  u^\top\lambda^{(\alpha)}$.

Define $\hat{\alpha}_i$ to be the sorted values of $(-x_i/u_i)$, so 
we see that $\lfun$ is linear in the regions $[\hat{\alpha}_i,\hat{\alpha}_{i+1}]$
and so it is trivial to check if $\lfun$ has a root in this region.
Thus the problem is reduced to finding the correct region $i$,
which can be done efficiently by a bisection search over $\log_2(n)$ values
of $i$ since $\lfun$ is monotonic. To see that $\lfun$ is monotonic, we write
it as 
$$ 
\lfun(\alpha) = \alpha + \sum_{i=1}^N \left( (u_ix_i + \alpha u_i^2 )/d_i \right) \chi_i(\alpha)
$$
where $\chi_i(\alpha)$ encodes the positivity constraint in the argument of $\parenth{\cdot}_+$ and is thus either $0$ or $1$, hence the slope is always positive.

\paragraph*{$\ell_1-\ell_2$ norm} Let $\mathscr{B}$ be a uniform disjoint partition of $\{1,\ldots,N\}$, i.e. $\bigcup_{b \in \mathscr{B}}=\{1,\ldots,n\}$ and $b \cap b' = \emptyset$ for all $b \neq b' \in \mathscr{B}$. The $\ell_1-\ell_2$ norm of $x$ is 
\begin{equation} \label{eq:l1-l2-sparsity-term}
\norm{x}_{1,2} = \sum_{b \in \mathscr{B}} \norm{x_b}
\end{equation}
where $x_b$ is the subvector of $x$ indexed by block $b$.

Without of loss of generality, we assume that all blocks have the same size, and we consider the metric $V=D+uu^\top$, where the diagonal matrix $D$ is constant on each block $b$. We now detail how to compute the proximity operator in $\Hm_V$ of $\f=\lambda\norm{\cdot}_{1,2}$, $\lambda > 0$. For this, we will exploit Theorem~\ref{theo:proxVrank1} and the expression of $\prox_{\f \circ D^{-1/2}}$, i.e. block soft-thresholding. The latter gives
\[
\parenth{D^{-1/2}\prox_{\f \circ D^{-1/2}}(D^{1/2}x)}_b = \parenth{\prox_{\f \circ D^{-1}}(x)}_b = \parenth{1-\frac{\lambda}{d_b\norm{x_b}}}_+x_b, \qquad \forall b \in \mathscr{B} ~,
\]
where $d_b$ is the diagonal entry of $D$ shared by block $b$. This then entails that
\begin{equation*}
\begin{split}
\lfun(\alpha) = \pds{x}{u} + \alpha - \sum_{b \in \mathscr{I}(\alpha)}\parenth{\parenth{1-\frac{\lambda}{d_b\norm{x_b - \alpha u_b/d_b}}}\parenth{\pds{x_b}{u_b}-\alpha\norm{u_b}^2/d_b}} ~,
\end{split}
\end{equation*}
where $\mathscr{I}(\alpha)=\acc{b \in \mathscr{B}: \norm{x_b - \alpha u_b/d_b} \geq \lambda/d_b}$. This is a piecewise smooth function, with breakpoints at the values of $\alpha$ where the active support $\mathscr{I}(\alpha)$ changes. To compute the root of $\alpha$, it is sufficient to locate the two breakpoints where $\lfun$ changes sign, and then run a fast root-finding algorithm (\eg Newton's method) on this interval where $\alpha$ is actually $C^\infty$. Denote $N_{\mathscr{B}}=\lfloor N/\abs{b} \rfloor$ the number of blocks. There are at most $2 N_{\mathscr{B}}$ breakpoints, and these correspond to the two real roots of $N_{\mathscr{B}}$ univariate quadratic polynomials, % of the second degree,
 each corresponding to
\[
\norm{d_b x_b - \alpha u_b}^2 = \alpha^2\norm{u_b}^2 - 2\alpha d_b\pds{x_b}{u_b} + d_b^2\norm{x_b}^2 = \lambda^2 ~.
\] 
Sorting these roots costs at most $O(N_{\mathscr{B}}\log N_{\mathscr{B}})$. To locate the breakpoints, a simple procedure is a bisection search on the sorted values, and each step necessitates to evaluate $\lfun$. This search also costs at most $O(N_{\mathscr{B}}\log N_{\mathscr{B}})$ operations (observe that all inner products and norms in $\lfun$ can be computed once for all). In summary, locating the interval of breakpoints containing the root takes $O(N_{\mathscr{B}}\log N_{\mathscr{B}})$ operations, though we believe this complexity could be made linear in $N_{\mathscr{B}}$ with an extra effort.

%%%%%%%%%%%%%%%%%%%%%%%%%%%%%%%%%%%%%%%%%%%%%%%%%%%%%%%%%%%%%%%%%%%%%%%%%%%%%%%%
%% A SR1 forward--backward algorithm %%%%%%%%%%%%%%%%%%%%%%%%%%%%%%%%%%%%%%%%%%%
%%%%%%%%%%%%%%%%%%%%%%%%%%%%%%%%%%%%%%%%%%%%%%%%%%%%%%%%%%%%%%%%%%%%%%%%%%%%%%%%

\section{A SR1 forward--backward algorithm}
\label{sec:SR1}

\subsection{Metric construction}
Following the conventional quasi-Newton notation,
we let $B$ denote an approximation to the Hessian of $f$ and $H$ denote
an approximation to the inverse Hessian. 
All quasi-Newton methods update an approximation to the (inverse) Hessian
that satisfies the \emph{secant condition}:
\begin{equation}\label{eq:secant}
    H_k y_k = s_k, \quad\text{where}\quad y_k = \nabla f(x_k) - \nabla f(x_{k-1}), \quad s_k = x_k - x_{k-1}.
\end{equation}

Algorithm~\ref{alg:main} follows the SR1 method~\cite{SR1}, which uses a rank-1 update
to the inverse Hessian approximation at every step.  The SR1 method
is perhaps less well-known than BFGS, but it has the crucial property
that updates are rank-1, rather than rank-2,
and it is described ``[SR1] has now taken its place alongside the BFGS method as the pre-eminent updating formula.''\cite{GouldLectureNotes}.

We propose two important modifications to SR1. The first is to use limited-memory,
as is commonly done with BFGS. In particular, we use zero-memory, which means
that at every iteration, a new diagonal plus rank-one matrix is formed.
The other modification is to extend the SR1 method to the general setting
of minimizing $f + h$ where $f$ is smooth but $h$ need not be smooth;
this further generalizes the case when $h$ is an indicator function of a
convex set.  Every step of the algorithm replaces $f$ with a quadratic approximation, and keeps $h$ unchanged.
Because $h$ is left unchanged, the subgradient of $h$ is used in an \emph{implicit} manner,
in comparison to methods such as~\cite{YuVishwanathan10} that use an approximation
to $h$ as well and therefore take an \emph{explicit} subgradient step.

\begin{algorithm}[h]
    \caption{Sub-routine to compute the approximate inverse Hessian $H_k$, 0SR1 variant \label{alg:SR1} }
\begin{algorithmic}[1]
    \REQUIRE $k, s_k, y_k$ as in \eqref{eq:secant}; and $0< \gamma < 1, \; 0 < \tau_\text{min} <\tau_\text{max} $ 
\IF{$k=1$} 
    \STATE $H_0 \leftarrow \tau \Id$ where $\tau > 0$ is arbitrary
    \STATE $u_k \leftarrow 0$
\ELSE
    \STATE   $ \BBa \leftarrow \frac{ \pds{s_k}{ y_k } }{ \norm{y_k}^2 } $
    \hfill \COMMENT{Barzilai--Borwein step length}
    \STATE  Project $\BBa$ onto $[\tau_\text{min},\tau_\text{max}]$
  \STATE $H_0 \leftarrow \gamma \BBa \Id$
  \IF{ $\pds{s_k - H_0 y_k }{y_k} \le 10^{-8} \|y_k\|_2 \|s_k - H_0 y_k \|_2 $ }
        \STATE $u_k \leftarrow 0$  \hfill \COMMENT{Skip the quasi-Newton update}
  \ELSE
  \STATE $u_k \leftarrow (s_k - H_0 y_k)/\sqrt{\pds{s_k - H_0 y_k }{y_k}})$. 
  \ENDIF
 \ENDIF
 \RETURN \ $H_k = H_0 + u_ku_k^\top$ \hfill \COMMENT{$B_k = H_k^{-1}$ can be computed via the Sherman-Morrison formula}
\end{algorithmic}  
\end{algorithm}

\newcommand{\gk}{\nabla f(x_k) }

\paragraph{Choosing $H_0$} \label{sec:h0}
In our experience, the choice of $H_0$ is best if scaled with a Barzilai--Borwein
spectral step length
\begin{equation}
\BBa=\pds{s_k}{y_k}/\pds{y_k}{y_k}
    \label{eq:BBa}
\end{equation}
(we call it $\BBa$ to distinguish it from the other Barzilai--Borwein step size
$\BBb = \linebreak \pds{s_k}{s_k}/\pds{s_k}{y_k} \ge \BBa$).

In SR1 methods, the quantity $ \pds{s_k - H_0 y_k}{y_k}$ must be positive
in order to have a well-defined update for $u_k$. The update is:
\begin{equation}
    H_k = H_0 + u_k u_k^\top,\quad u_k = (s_k - H_0 y_k )/\sqrt{ \pds{s_k - H_0 y_k}{y_k} }.
    \label{eq:Hk}
\end{equation}
For this reason, we choose $H_0 = \gamma \BBa \Id$ with $0 < \gamma < 1$,
and thus $  0 \le \pds{s_k - H_0 y_k}{y_k} = (1-\gamma)\pds{s_k}{y_k}$.
If $\pds{s_k}{y_k}=0$,
then there is no symmetric rank-one update that satisfies the secant condition.
The inequality $\pds{s_k}{y_k} > 0$ is the \emph{curvature condition},
and it is guaranteed for all strictly convex objectives. 
Following the recommendation in~\cite{NocedalWright}, we skip updates
whenever $\pds{s_k}{y_k}$ cannot be guaranteed to be non-zero
given standard floating-point precision.

A value of $\gamma=0.8$ works well in most situations.
We have tested picking $\gamma$ adaptively,
as well as trying $H_0$ to be non-constant on the diagonal, but found no consistent improvements.

\subsection{Convergence analysis}
For our convergence analysis, we naturally assume that $f$ is also $\mu$-strongly convex. \JF{This assumption is standard for Newton and quasi-Newton methods if one wants to get provable convergence guarantees. Indeed, one has to assume some non-singularity assumption for the iterates to be well-defined. We can make our strong convexity assumption hold only locally around a minimizer, but our guarantees will also become of local nature. The strong convexity assumption can be weakened to restricted strong convexity when $\f=\iota_S$, where $S \subset \RR^N$ is a linear subspace. In this case, problem~\eqref{eq:minP} is equivalent to 
\[
\min_{x \in S} f \circ \proj_S(x) .
\]
Thus, since $P=(\gamma \BBa)^{-1} \Id$ for the 0SR1 and 0BFGS metrics, it follows from \eqref{eq:proxVgenr} that $\prox_{\kappa_k \f}^{B_k}(x) \in S$. Hence, from~\eqref{eq:fbsr1}, the quasi-Newton forward-backward sequence $\parenth{x_k}_{k \in \NN} \subset S$. In turn, the quasi-Newton vectors $s_k$ and $y_k$ belong to $S$, i.e., $\forall k \in \NN$
\[
s_k=x_k-x_{k-1} \in S \quad \text{and} \quad y_k=\proj_S(\nabla f(\proj_S(x_k))) - \proj_S(\nabla f(\proj_S(x_{k-1}))) \in S  .
\]
Now, assuming that $\f$ is strongly convex on $S$ and its gradient is Lipschitz on $S$, with constants $\mu_S$ and $L_S$, the bounds on the eigenvalues of matrices $H_k$ in Lemma~\ref{lem:hkbnds} and Lemma~\ref{lem:hkbnds-bfgs} hereafter will remain true with $(\mu,L)$ replaced by $(\mu_S,L_S)$. The convergence claims of Theorem~\ref{thm:convergence} and Theorem~\ref{thm:convergence-bfgs} will also hold with rates characterized by the condition number $L_S/\mu_S$ rather than $L/\mu$.}\\

The following lemma delivers useful uniform bounds on the eigenvalues of matrices $H_k$.

\begin{lemma}
\label{lem:hkbnds}
Suppose that $f$ is $\mu$-strongly convex and its gradient is $L$-Lipschitz. Then, $\forall k \geq 0$, $a \Id \preceq H_k \preceq b \Id$, $0 < a=\gamma L^{-1}$, $0 < b =\frac{(1+\gamma)\mu^{-1}-2\gamma L^{-1}}{1-\gamma}$.
\end{lemma}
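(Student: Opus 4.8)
The plan is to exploit the very simple spectral structure of $H_k = H_0 + u_k u_k^\top$. Since $H_0 = \gamma\BBa\Id$ is a positive multiple of the identity and $u_k u_k^\top$ is rank-one positive semi-definite, $H_k$ has the eigenvalue $\gamma\BBa$ with multiplicity $N-1$ and a single larger eigenvalue $\gamma\BBa + \norm{u_k}^2$ (with eigenvector $u_k$). Consequently, establishing $a\Id\preceq H_k\preceq b\Id$ reduces to the two scalar estimates $\gamma\BBa \ge a$ and $\gamma\BBa + \norm{u_k}^2 \le b$; that is, it suffices to control the Barzilai--Borwein length $\BBa$ and the norm of the update vector $u_k$ using only $\mu$-strong convexity and $L$-Lipschitz continuity of $\nabla f$. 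The skipped-update steps (where $u_k=0$, so $H_k=H_0$) and the initialization are then handled as degenerate cases of the same bound, once $H_0$ is scaled inside $[\gamma/L,\gamma/\mu]$.

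First I would record the two-sided curvature estimates. Strong convexity gives $\pds{s_k}{y_k}\ge \mu\norm{s_k}^2$ and hence $\norm{y_k}\ge\mu\norm{s_k}$, while the Baillon--Haddad theorem (co-coercivity of $\nabla f$) gives $\pds{s_k}{y_k}\ge \tfrac1L\norm{y_k}^2$ together with $\norm{y_k}\le L\norm{s_k}$. Combining these yields the key sandwich $\tfrac1L \le \BBa = \pds{s_k}{y_k}/\norm{y_k}^2 \le \tfrac1\mu$ as well as $\norm{s_k}^2/\pds{s_k}{y_k}\le \mu^{-1}$. (If the projection onto $[\tau_\text{min},\tau_\text{max}]$ in Algorithm~\ref{alg:SR1} is inactive on this interval, i.e.\ $\tau_\text{min}\le 1/L$ and $\tau_\text{max}\ge 1/\mu$, these bounds on $\BBa$ survive the projection; otherwise $1/L$ and $1/\mu$ are simply replaced by $\tau_\text{min}$ and $\tau_\text{max}$ throughout.)

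The lower bound is then immediate: $u_ku_k^\top\succeq0$ forces $H_k\succeq H_0 = \gamma\BBa\Id\succeq (\gamma/L)\Id$, so $a=\gamma L^{-1}$. For the upper bound I would make the update vector explicit. Using $\BBa\norm{y_k}^2=\pds{s_k}{y_k}$ one recovers the denominator identity $\pds{s_k-H_0y_k}{y_k}=(1-\gamma)\pds{s_k}{y_k}$ already noted after~\eqref{eq:Hk}, and expanding the square gives $\norm{s_k-H_0y_k}^2=\norm{s_k}^2-\gamma(2-\gamma)\BBa\pds{s_k}{y_k}$. Hence $\norm{u_k}^2 = \tfrac{1}{1-\gamma}\bpa{\norm{s_k}^2/\pds{s_k}{y_k}-\gamma(2-\gamma)\BBa}$. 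Inserting $\norm{s_k}^2/\pds{s_k}{y_k}\le \mu^{-1}$ and $\BBa\ge L^{-1}$ into $\norm{u_k}^2$, and $\BBa\le\mu^{-1}$ into the base term $\gamma\BBa$, and adding the two eigenvalue contributions produces an upper bound no larger than the announced constant $b=\tfrac{(1+\gamma)\mu^{-1}-2\gamma L^{-1}}{1-\gamma}$, which is exactly the stated enclosure.

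The main obstacle is purely in the bookkeeping of the upper bound: the same length $\BBa$ enters both the base term $\gamma\BBa$ and, with a negative sign, the update norm $\norm{u_k}^2$, so one must choose carefully which of the two-sided bounds $1/L \le \BBa \le 1/\mu$ to apply at each occurrence in order to land on the claimed constant $b$ without circularity. A secondary, routine point is to check that the degenerate cases---the skipped SR1 updates, where $H_k=H_0$, and the first iterate---also satisfy the $[a,b]$ enclosure, which follows from $\gamma/L\le\gamma\BBa\le\gamma/\mu\le b$, the last inequality being a consequence of $L\ge\mu$ and the elementary bound $(1-\gamma)^2\ge0$.
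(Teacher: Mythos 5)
Your proof is correct and follows essentially the same route as the paper's: the two-sided curvature estimates $L^{-1}\le\BBa\le\BBb\le\mu^{-1}$, the explicit expansion of $\norm{u_k}^2=(1-\gamma)^{-1}\bpa{\BBb-\gamma(2-\gamma)\BBa}$, and the spectral bound $H_0\preceq H_k\preceq H_0+\norm{u_k}^2\Id$. The only (harmless) deviation is that you apply $\BBa\ge L^{-1}$ to the whole term $-\gamma(2-\gamma)\BBa$, giving a slightly tighter intermediate constant than the paper's, which still sits below the announced $b$.
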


The proof is in Section~\ref{appendix:lem:hkbnds}.
%
%We are now ready to state our convergence result.
\begin{theorem}
\label{thm:convergence}
Suppose that $f$ is $\mu$-strongly convex and its gradient is $L$-Lipschitz. Let $a$ and $b$ be given as in Lemma~\ref{lem:hkbnds}. Assume that $0 < \underline{\step} \leq \stk \leq \overline{\step} < 2(Lb)^{-1}$. Let $\alpha=1-\tfrac{Lb\overline{\step}}{2}$ and $\eta=\tfrac{L}{2\gamma\mu\underline{\step}}$. Then, the sequence of iterates $(x_k)_{k \in \NN}$ of the 0SR1 forward--backward Algorithm~\ref{alg:main} with $t=1$ converge linearly to the unique minimizer $\xs$, i.e.
\[
\norm{\xk - \xs} \leq \sqrt{\frac{2\parenth{F(x_0) -F(\xs)}}{\mu}}\rho^{k/2} ~,
\]
where
\begin{equation*}
\rho = 
\begin{cases}
\rho_1 				& \text{ for } \alpha \in ]0,1/2[ \\
\min(\rho_1,\rho_2) 	& \text{ for } \alpha \in [1/2,1[
\end{cases} ~,
\end{equation*}
with
\begin{align*}
\rho_1 = 1 - \alpha\parenth{1-2\parenth{\sqrt{\eta^2+\eta}-\eta}} \quad \text{and} \quad
\rho_2 = 
\begin{cases}
2\eta ~ (\leq 1/2) 	& \text{ if } \eta \leq 1/4 \\
1 - \frac{1}{8\eta} 	& \text{ otherwise } ~.
\end{cases}
\end{align*}
\end{theorem}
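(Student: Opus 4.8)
The plan is to prove linear convergence of the objective gap $\Delta_k := F(\xk)-F(\xs)$ and to transfer it to the iterates only at the very end. Since $f$ is $\mu$-strongly convex and $h$ is convex, $F=f+h$ is $\mu$-strongly convex; hence $\xs$ is unique and the quadratic-growth bound $\tfrac{\mu}{2}\norm{\xk-\xs}^2 \le \Delta_k$ holds. Thus, once I establish a contraction $\Delta_{k+1}\le \rho\,\Delta_k$, iterating gives $\Delta_k \le \rho^k\Delta_0$, and the stated estimate $\norm{\xk-\xs}\le \sqrt{2\Delta_0/\mu}\,\rho^{k/2}$ follows immediately. Everything then reduces to producing the right one-step contraction, for which I would combine two workhorse inequalities. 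Throughout set $p_k := \xkk-\xk$ (recall $t=1$), and write $a\Id\preceq H_k\preceq b\Id$, i.e. $b^{-1}\Id \preceq B_k \preceq a^{-1}\Id$, from Lemma~\ref{lem:hkbnds}.

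First I would establish a \emph{sufficient-decrease} inequality. Because $\xkk$ minimizes the surrogate $Q_{\stk}^{B_k}(\cdot;\xk)+h$, which is $\tfrac1\stk$-strongly convex in $\norm{\cdot}_{B_k}$, evaluating the strong-convexity inequality at $\xk$ and combining with the descent lemma for the $L$-smooth $f$, then using $B_k\succeq b^{-1}\Id$ and $\stk\le\overline{\step}$, yields
\[
F(\xk)-F(\xkk) \ge \Big(\tfrac{1}{\stk b}-\tfrac{L}{2}\Big)\norm{p_k}^2 \ge \tfrac{\alpha}{\overline{\step}\,b}\,\norm{p_k}^2 .
\]
The hypothesis $\overline{\step}<2(Lb)^{-1}$ is exactly what makes $\alpha=1-\tfrac{Lb\overline{\step}}{2}>0$, so this is a genuine descent.

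Next I would derive a \emph{gap-versus-progress} inequality from the optimality condition of the prox step, $-\tfrac1\stk B_k p_k=\nabla f(\xk)+\xi_k$ with $\xi_k\in\partial h(\xkk)$. Pairing $\nabla f(\xk)+\xi_k$ with $\xkk-\xs$, then invoking $\mu$-strong convexity of $f$, convexity of $h$, and the descent lemma, and finally bounding $\norm{B_kp_k}\le a^{-1}\norm{p_k}$ and $\scal{B_kp_k}{p_k}\ge b^{-1}\norm{p_k}^2$, gives the fundamental quadratic inequality
\[
\Delta_{k+1}+\tfrac{\mu}{2}\norm{\xk-\xs}^2+\Big(\tfrac{1}{\stk b}-\tfrac{L}{2}\Big)\norm{p_k}^2 \le \tfrac{1}{\stk a}\,\norm{p_k}\,\norm{\xk-\xs}.
\]
This inequality is the crux: it couples $\Delta_{k+1}$, $\norm{p_k}$ and $\norm{\xk-\xs}$ \emph{without} needing to lower-bound the distance $\norm{\xk-\xs}$ by $\sqrt{\Delta_k}$, which would fail because $h$ is nonsmooth (kinks make $\Delta_k\gg\norm{\xk-\xs}^2$ possible). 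To turn it into a lower bound on $\norm{p_k}$ in terms of $\Delta_k$, I would complement it with a comparison of the surrogate value $Q_{\stk}^{B_k}(\xkk;\xk)+h(\xkk)$ against the surrogate evaluated along the segment $\xk+s(\xs-\xk)$, $s\in[0,1]$, which injects $\Delta_k$ directly through convexity of $F$.

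The final and hardest step is to fold these pieces into the scalar recursion and extract the exact rate. Dropping $\Delta_{k+1}\ge0$ in the fundamental inequality and using $\tfrac{\mu}{2}\norm{\xk-\xs}^2\le\Delta_k$ relates $\norm{p_k}$ and $\Delta_k$; substituting into the sufficient decrease and optimizing the resulting quadratic over the admissible step range $[\underline{\step},\overline{\step}]$ produces the contraction factor. I expect the square-root term to appear precisely here: the worst case forces the positive root of $t^2+2\eta t-\eta=0$, namely $t=\sqrt{\eta^2+\eta}-\eta$, where $\eta=\tfrac{1}{2\mu\underline{\step}a}=\tfrac{L}{2\gamma\mu\underline{\step}}$ collects the worst-case constants at $\stk=\underline{\step}$ (note $1-2t=(\sqrt{1+\eta}-\sqrt{\eta})^2$, so $\rho_1=1-\alpha(\sqrt{1+\eta}-\sqrt{\eta})^2$). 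The two-case structure $\rho=\min(\rho_1,\rho_2)$, with the split at $\alpha=1/2$, reflects that two different admissible estimates of this quadratic dominate in different regimes of $\alpha$ and $\eta$, and one retains whichever is smaller. I anticipate the bookkeeping of constants—keeping $\alpha$ tied to $\overline{\step}$ while $\eta$ is tied to $\underline{\step}$, and verifying that $\rho_2$ (the regime $\eta\le1/4$ versus $\eta>1/4$) indeed arises from the segment-comparison estimate—to be the main obstacle, as every ``clean'' elimination collapses the square root and overshoots the claimed rate; the precise two-parameter optimization must be carried out without prematurely decoupling the variables.
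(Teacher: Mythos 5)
Your overall architecture is the paper's: contract the objective gap $\ek=F(\xk)-F(\xs)$ (your $\Delta_k$) and transfer to the iterates via $\mu$-strong convexity of $F$ only at the very end; your sufficient-decrease inequality is an equivalent form of the paper's $F(\xk)-F(\xkk)\ge\alpha\dk\ge\tfrac{\alpha}{\stk}\norm{\xkk-\xk}_{B_k}^2$ with $\dk = h(\xk)-h(\xkk)+\pds{\nabla f(\xk)}{\xk-\xkk}$; your ``segment comparison'' is exactly how the paper obtains $\rho_2$; and the quadratic $t^2+2\eta t-\eta=0$ you name is precisely the stationarity condition the paper solves when optimizing its Young parameter, so you have correctly reverse-engineered where $\sqrt{\eta^2+\eta}-\eta$ comes from. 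The $\rho_2$ branch of your plan would go through essentially verbatim.

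The genuine gap is in the $\rho_1$ branch. First, your ``fundamental quadratic inequality'' is built from the Euclidean bounds $\norm{B_kp_k}\le a^{-1}\norm{p_k}$ and $\pds{B_kp_k}{p_k}\ge b^{-1}\norm{p_k}^2$, which are loose by the conditioning $b/a$ of $H_k$. Carrying your two inequalities through optimally (maximize the concave quadratic in $\norm{\xk-\xs}$, then substitute the sufficient decrease) yields a contraction of the form $1-\Theta\bigl(\alpha a/(b\eta)\bigr)$, whereas $\rho_1\approx1-\alpha/(4\eta)$ for large $\eta$; since $a/b\sim\mu/L$ you lose a full condition-number factor, and no recombination of those two inequalities recovers the stated constant. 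The paper avoids this by applying Cauchy--Schwarz in the $B_k$-inner product, $\pds{B_k(\xk-\xkk)}{\xkk-\xs}\le\norm{\xkk-\xk}_{B_k}\norm{\xkk-\xs}_{B_k}$, keeping $\norm{\xkk-\xk}_{B_k}^2\le\stk\dk$ exact and paying only $a^{-1/2}$ on the distance factor. Second, the step ``drop $\Delta_{k+1}\ge0$ and use $\tfrac{\mu}{2}\norm{\xk-\xs}^2\le\Delta_k$'' is vacuous: it replaces a term on the smaller side of the inequality by an upper bound. The paper instead puts $\ek$ itself on the left-hand side (splitting $\ek$ into three brackets, killing one by convexity of $f$ and bounding another by the subgradient inequality at $\xs$) to get $\ek\le\dk+\sqrt{1/(\underline{\step}a)}\,\norm{\xkk-\xs}\sqrt{\dk}$, and then absorbs $\norm{\xkk-\xs}^2\le\tfrac{2}{\mu}\ekk\le\tfrac{2}{\mu}\ek$ with a tunable Young parameter whose optimization is the single place the square root appears. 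Finally, a small but real point: the split at $\alpha=1/2$ is not about which of two admissible estimates dominates --- the $\rho_2$ bound is simply unavailable for $\alpha<1/2$, because the segment argument needs the majorization $Q_{\stk}^{B_k}(\cdot;\xk)+h\ge F$, which holds exactly when $\stk\le(Lb)^{-1}$, i.e.\ $\alpha\ge1/2$.
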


The proof is in Section~\ref{appendix:thm:convergence}. 
Fig.~\ref{fig:rates} shows the phase diagram of the rate $\rho$ as a function of $\eta$ and $\alpha$.

Actually, this is the standard setting for Newton and quasi-Newton methods if one wants to get provable convergence guarantees. Indeed, one has to assume some non-singularity assumption for the iterates to be well-defined. We can make our strong convexity assumption holds only locally around a minimizer, but our guarantees will also become of local nature. 

%\SB{Adding this remark to make the theorem a bit simpler to quickly grasp}
\begin{remark}\label{rem:ratesr1}
%{~} %\vspace*{-0.2cm}
%\begin{itemize}
%    \item 
    For a concrete example of the rates in Theorem \ref{thm:convergence}, choose $\gamma=1/2$ so that $a=1/(2L)$ and $b=3\mu^{-1} - 2L^{-1}$, and choose $\stk \equiv \overline{\step} = \underline{\step} = 1/(Lb)$.
    Thus $\alpha=1/2$. Let $c=L/\mu$ be the condition number of the problem. Then $\eta = c(3c-2)$, and so for large $c\gg 1$, we have $\eta \gg 1$ and via Taylor expansion we see that $\rho_1 - \rho_2 \rightarrow 0$
    as $\eta \rightarrow \infty$. In turn, the rate of linear convergence is $\rho \approx 1-1/(8\eta) \approx 1 - 1/(24 c^2)$. \POc{}{Although, this rate is apperently worse than, for example, the standard rate obtained for forward-backward, our numerical experiments demonstrate that the performance is significantly better than this worst case prediction. Unless the metric approximates second order information, which is not the case for our zero memory variant, we do not expect to improve the convergence rate. Possibly, a deep analysis might improve the constants appearing in the convergence rate estimate. However, the efficiency of our method comes from an ``optimal'' compromise between locally adapting the metric and a cheap computability of the update step.} 
%\end{itemize}
\end{remark}

\begin{SCfigure}
\includegraphics[width=0.5\textwidth]{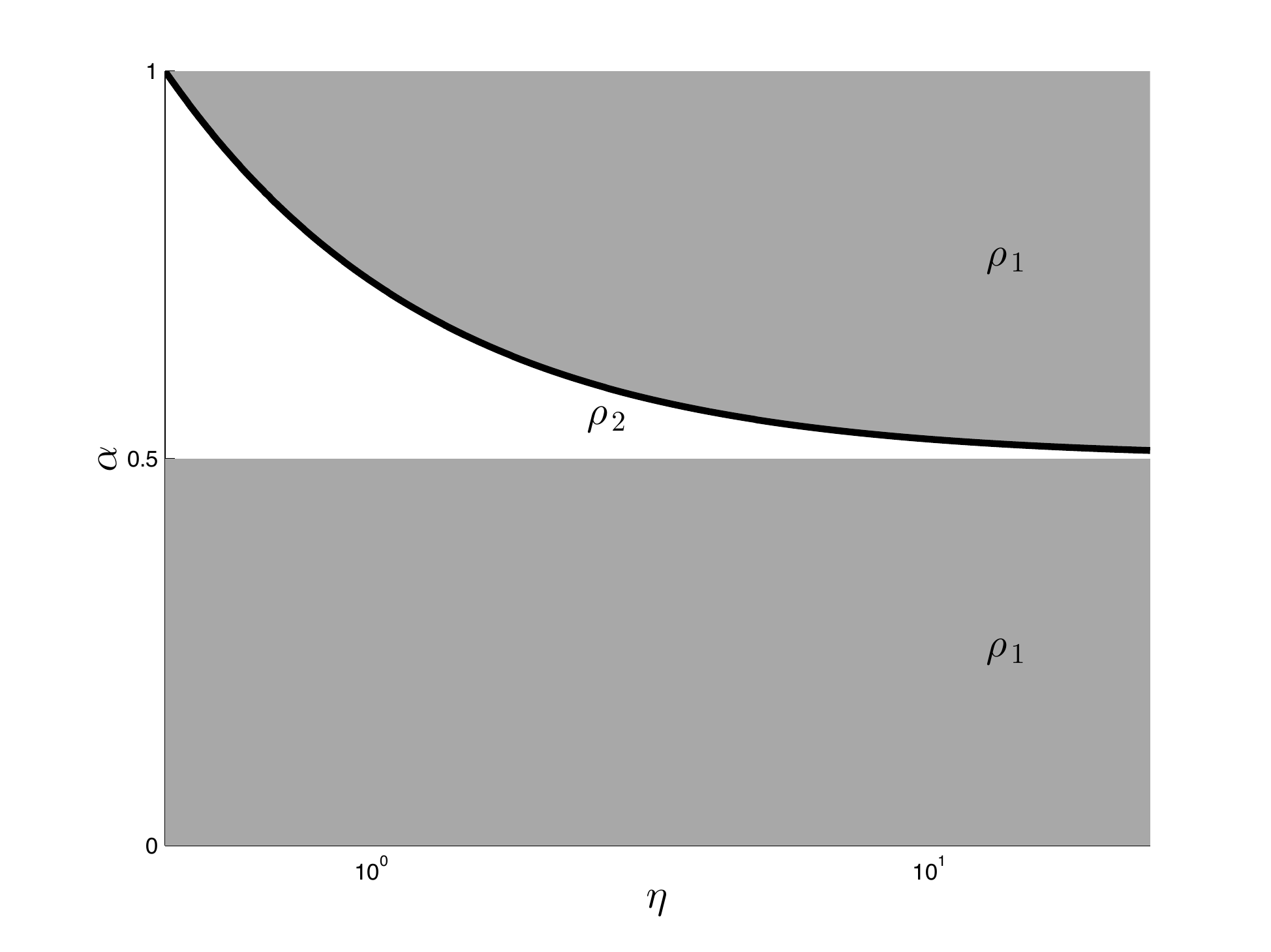}
\caption{Convergence rate as a function of the parameters $\eta$ and $\alpha$ (see Theorem~\ref{thm:convergence} for details).}
\label{fig:rates}
\end{SCfigure}

%%%%%%%%%%%%%%%%%%%%%%%%%%%%%%%%%%%%%%%%%%%%%%%%%%%%%%%%%%%%%%%%%%%%%%%%%%%%%%%%
%% L-BFGS forward--backward splitting %%%%%%%%%%%%%%%%%%%%%%%%%%%%%%%%%%%%%%%%%%
%%%%%%%%%%%%%%%%%%%%%%%%%%%%%%%%%%%%%%%%%%%%%%%%%%%%%%%%%%%%%%%%%%%%%%%%%%%%%%%%

\section{L-BFGS forward--backward splitting}
\label{sec:BFGS}

In this section, we show how the extended theory for rank-$r$ modified proximity operators in Section~\ref{sec:rank-r-mod-prox} can be used for the efficient treatment of the more sophisticated L-BFGS method in our context of proximal quasi-Newton methods. We consider Algorithm~\ref{alg:main} where the metric construction is outlined in Section~\ref{subsec:L-BFGS-metric} following the notation in \cite{NocedalWright}. The proximity operator in \eqref{eq:fbsr1} will be of type ``diagonal $\pm$ rank-$2$''. 
%The efficient numerical realization of the nested strategy that is proposed here is left as future work.

\subsection{Metric construction}
\label{subsec:L-BFGS-metric}

Define
\[
    \rho_k = \frac{1}{y_k^\top s_k},\quad V_k = \Id - \rho_k y_ks_k^\top, ~~\text{with}~~ s_k = x_{k+1}-x_k,\quad y_k = \nabla f(x_{k+1}) - \nabla f(x_k)
\]
as in \eqref{eq:secant}. 
Store $\{s_i,y_i\}$ for $i=k-m,k-m-1,\ldots,k-1$.
Choose $H_k^0$ as before, e.g., $H_k^0 = \gamma \tau \Id$. Then the limited-memory BFGS (L-BFGS) quadratic approximation is 
\begin{align*}
    H_k &= ( V_{k-1}^\top \cdots V_{k-m}^\top) H_k^0 (V_{k-m}\cdots V_{k-1}) \\
        &\quad + \rho_{k-m}(V_{k-1}^\top \cdots V_{k-m+1}^\top)s_{k-m}s_{k-m}^\top(V_{k-m+1}\cdots V_{k-1} ) \\
        &\quad + \rho_{k-m+1}(V_{k-1}^\top \cdots V_{k-m+2}^\top)s_{k-m+1}s_{k-m+1}^\top(V_{k-m+2}\cdots V_{k-1} ) \\
        &\quad + \cdots + \rho_{k-1}s_{k-1}s_{k-1}^\top.
\end{align*}
In the classical (unconstrained) L-FBGS, the update is then  $x_{k+1} = x_k - \alpha_k H_k \nabla f_k$.
%and $x_{k+1} = x_k - \alpha_k H_k \nabla f_k$ in the unconstrained (non-proximal) case.

In the extreme low-memory case ($m=1$), we have
\[ H_{k+1} = V_k^\top H_k^0 V_k + \rho_k s_ks_k^\top \]
which gives us a 0-BFGS method. For this $m=1$ case and $\tau=\BBa$, writing $V$ for $V_k$ and so on, we can expand
\begin{equation}
\label{eq:BFGS-inv-Hess-formula-derivation}
  \begin{split}
    H_k =&\  V^\top H_k^0 V + \rho ss^\top \\
    =&\  (\Id-\rho sy^\top)(\gamma\tau\Id)(\Id-\rho ys^\top) + \rho ss^\top \\
    =&\ \gamma\tau( \Id - \rho(ys^\top+sy^\top)+\rho^2\|y\|^2ss^\top ) + \rho ss^\top \\
    \smat{\rho \|y\|^2 \tau = 1}
    =&\ \gamma\tau \Id + \rho(1+\gamma) \Big( ss^\top - \frac{\gamma\tau}{1+\gamma}(sy^\top + ys^\top) + \frac{\gamma^2 \tau^2}{(1+\gamma)^2} y y^\top \Big) - \rho \frac{\gamma^2 \tau^2}{1+\gamma} yy^\top \\
    =&\  \gamma\tau \Id + \rho(1+\gamma) \Big(\underbrace{s- \frac{\gamma\tau}{1+\gamma}y}_{=:u_\gamma}\Big) \Big(s- \frac{\gamma\tau}{1+\gamma}y\Big)^\top - \rho \frac{\gamma^2 \tau^2}{1+\gamma} yy^\top \,,
  \end{split}
\end{equation}
which shows that the inverse Hessian approximation is of type ``diagonal $+$ rank-1 $-$ rank-1'' with positive semi-definite rank-1 matrices. Note that we are free to choose $\gamma=1$, in which case the simpler expression follows:
\begin{equation} \label{eq:BFGS-inv-Hess-formula}
  H_k = \tau \Id + 2\rho \Big(s- \frac{\tau}{2}y\Big) \Big(s- \frac{\tau}{2}y\Big)^\top - \rho \frac{\tau^2}{2} yy^\top \,,
\end{equation}
Applying the Sherman--Morrison inversion lemma to this, we obtain the following approximation to the Hessian matrix $B_k=H_k^{-1}$:
\[
   B_k = B_k^0 - \frac{B_k^0ss^\top B_k^0}{s^\top B_k^0 s} + \frac{y y^\top}{y^\top s} 
     = \frac{1}{\gamma\tau}\Big(\Id - \frac{ss^\top}{s^\top s} + \gamma \tau \frac{yy^\top}{y^\top s}\Big) 
     \overset{(\tau=\BBa)}{=} 
     \frac{1}{\gamma\BBa}\Big(\Id - \frac{ss^\top}{s^\top s} + \gamma\frac{yy^\top}{y^\top y}\Big).
\]
The proximity operator with respect to this metric can be computed as shown in Corollary~\ref{cor:proxVrankr}. Only the evaluation of the simple proximity operator $\prox_{\f}^{B_0}$ is required. The main computational cost comes from the two dimensional root finding problem, which can be solved efficiently using semi-smooth Newton methods.

% 
% Using $\tau=\BBa=\pds{s_k}{y_k}/\pds{y_k}{y_k}$ and $\rho^{-1}=y^\top s$
% so  $\rho\|y\|^2 = 1/\BBa$, this simplifies
% to the following diagonal plus rank-2 term:
% \begin{equation}\label{eq:10BFGS}
%     H = \gamma \BBa \Id + \underbrace{\rho\left( \frac12(1+\gamma)s-\gamma\BBa y\right)}_{u}s^\top
%     + \rho s\left( \frac12(1+\gamma)s^\top-\gamma\BBa y^\top\right)
%         = H^0 + us^\top + su^\top.
% \end{equation}
% Unlike the SR1 case, we are free to choose $\gamma=1$, in which case
% \begin{equation}\label{eq:10BFGS1}
%     u=\rho\left( s-\BBa y\right).
% \end{equation}
% We can put this in the compact form
% \[
% H = H^0 + v\Sigma v^\top,\quad v=[v_1 ~~ v_2], ~~ v_1=(s+u)/\sqrt{2}, ~ v_2=(s-u)/\sqrt{2}, \quad \Sigma=\begin{bmatrix}1&0\\0&-1\end{bmatrix}
% \]
% or equivalently, the more appropriate form to us
% \[
% H = H^0 + v_1v_1^\top - v_2v_2^\top ~.
% \]

\subsection{Convergence analysis}

For the convergence analysis, we again assume that $f$ is also $\mu$-strongly convex. We start with a lemma which provides useful uniform bounds on the eigenvalues of matrices $H_k$.

\begin{lemma}
\label{lem:hkbnds-bfgs}
Suppose that $f$ is $\mu$-strongly convex and its gradient is $L$-Lipschitz. Then, $\forall k \geq 0$, $a \Id \preceq H_k \preceq b \Id$, $0 < a=\gamma/(1+\gamma) L^{-1}$, $0 < b = (1+2\gamma) \mu^{-1}  - \frac{(2+\gamma)\gamma}{1+\gamma} L^{-1}$.
\end{lemma}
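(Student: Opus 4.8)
The plan is to mirror the strategy already used for the SR1 metric in Lemma~\ref{lem:hkbnds}, since the structure of the argument transfers directly. The goal is a two-sided eigenvalue bound $a\Id \preceq H_k \preceq b\Id$ for the zero-memory L-BFGS inverse-Hessian approximation in~\eqref{eq:BFGS-inv-Hess-formula-derivation}. Rather than working with the delicate rank-1 $+$ rank-1 $-$ rank-1 decomposition directly, I would instead bound the eigenvalues of the inverse, $B_k = H_k^{-1}$, whose closed form was derived just above the lemma as
\[
B_k = \frac{1}{\gamma\tau}\Big(\Id - \frac{ss^\top}{s^\top s} + \gamma\tau\frac{yy^\top}{y^\top s}\Big),
\]
with $\tau=\BBa=\pds{s}{y}/\pds{y}{y}$ (dropping the iteration subscript $k$ for readability). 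Bounding $B_k$ from above and below then immediately yields the reciprocal bounds on $H_k$: if $\underline{b}\,\Id \preceq B_k \preceq \overline{b}\,\Id$ then $\overline{b}^{-1}\Id \preceq H_k \preceq \underline{b}^{-1}\Id$, so $a=\overline{b}^{-1}$ and $b=\underline{b}^{-1}$.

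The key steps, in order, are as follows. First I would record the quantities that strong convexity and Lipschitz continuity control: the curvature condition gives $\pds{s}{y}>0$, and the standard inequalities for a $\mu$-strongly convex function with $L$-Lipschitz gradient give $\mu\norm{s}^2 \le \pds{s}{y} \le L\norm{s}^2$ together with $\norm{y}^2 \le L\pds{s}{y}$ and $\pds{s}{y}\le \norm{s}\norm{y}$. These translate into bounds on $\tau=\pds{s}{y}/\norm{y}^2$, namely $L^{-1}\le\tau\le\mu^{-1}$ (using $\mu\pds{s}{y}\le\norm{y}^2\le L\pds{s}{y}$). Second, I would split $B_k$ into the scalar prefactor $(\gamma\tau)^{-1}$ and the bracketed matrix $M:=\Id - ss^\top/\norm{s}^2 + \gamma\tau\,yy^\top/\pds{s}{y}$, and bound the eigenvalues of $M$. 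The term $\Id - ss^\top/\norm{s}^2$ is an orthogonal projection, hence has eigenvalues in $[0,1]$; the term $\gamma\tau\, yy^\top/\pds{s}{y}$ is positive semidefinite rank-one with nonzero eigenvalue $\gamma\tau\norm{y}^2/\pds{s}{y}=\gamma$ (since $\tau\norm{y}^2=\pds{s}{y}$). So $M$ has eigenvalues lying in $[0,1+\gamma]$, but the lower bound needs care because $s$ and $y$ need not be orthogonal; I would argue the smallest eigenvalue of $M$ is strictly positive, most cleanly by noting $M\succeq 0$ is not quite enough and instead using that $B_k\succ0$ by the curvature condition and tracking the extreme eigenvalues through the rank-one perturbations. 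Third, combining the eigenvalue envelope for $M$ with the bounds on the prefactor $(\gamma\tau)^{-1}\in[\gamma^{-1}\mu,\gamma^{-1}L]$ produces explicit scalar bounds $\underline{b},\overline{b}$, which I would simplify and then invert to match the stated $a=\tfrac{\gamma}{1+\gamma}L^{-1}$ and $b=(1+2\gamma)\mu^{-1}-\tfrac{(2+\gamma)\gamma}{1+\gamma}L^{-1}$.

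The main obstacle is obtaining a \emph{tight} lower bound on the smallest eigenvalue of $M$ (equivalently, a clean upper bound $b$ on $H_k$), because the subtracted projection $ss^\top/\norm{s}^2$ and the added rank-one term $\gamma\tau\,yy^\top/\pds{s}{y}$ interact through the angle between $s$ and $y$, which strong convexity only partially controls. The natural route is to restrict attention to the two-dimensional subspace $\mathrm{span}\{s,y\}$ where all the action happens (on its orthogonal complement $B_k$ acts as $(\gamma\tau)^{-1}\Id$, giving eigenvalue $(\gamma\tau)^{-1}$), reduce to a $2\times2$ eigenvalue problem, and extremize over the admissible range of the correlation $\pds{s}{y}/(\norm{s}\norm{y})$ constrained by $\mu,L$. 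I expect the algebra to yield exactly the asymmetric constants in the statement, with $b$ arising from the worst-case configuration and $a=\tfrac{\gamma}{1+\gamma}L^{-1}$ arising from the largest attainable eigenvalue of $B_k$ scaled by the prefactor. The remaining verification that these match the claimed closed forms is routine scalar algebra that I would defer to the detailed proof.
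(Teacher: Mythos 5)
Your route---bounding $B_k=\tfrac{1}{\gamma\tau}M$ with $M=\Id-\tfrac{ss^\top}{\norm{s}^2}+\gamma\tau\tfrac{yy^\top}{\pds{s}{y}}$ and then inverting---is genuinely different from the paper's proof, which never inverts: it works directly with the decomposition $H_k=\gamma\tau\Id+\rho(1+\gamma)u_\gamma u_\gamma^\top-\rho\tfrac{\gamma^2\tau^2}{1+\gamma}yy^\top$ from \eqref{eq:BFGS-inv-Hess-formula-derivation}, computes $\rho\norm{u_\gamma}^2=\BBb-\tfrac{(2+\gamma)\gamma}{(1+\gamma)^2}\BBa$, plugs in $L^{-1}\le\BBa\le\BBb\le\mu^{-1}$, drops the negative rank-one term for the upper bound, and bounds that term's magnitude by $\tfrac{\gamma^2\tau}{1+\gamma}$ for the lower bound. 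Your lower bound does come out correctly your way: $\lambda_{\max}(M)\le 1+\gamma$ and $\tau\ge L^{-1}$ give $H_k\succeq\tfrac{\gamma\tau}{1+\gamma}\Id\succeq\tfrac{\gamma}{1+\gamma}L^{-1}\Id=a\Id$, matching the statement.

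The gap is in the upper bound on $H_k$. The step ``combine the eigenvalue envelope for $M$ with the bounds on the prefactor'' fails if the two are bounded separately. On $\mathrm{span}\{s,y\}$ the matrix $M$ has trace $1+\gamma$ and determinant $\gamma c^2$ with $c=\pds{s}{y}/(\norm{s}\norm{y})$, so $\lambda_{\min}(M)=\tfrac12\bigl((1+\gamma)-\sqrt{(1+\gamma)^2-4\gamma c^2}\bigr)\approx\tfrac{\gamma c^2}{1+\gamma}$, and the assumptions only force $c^2\ge\mu/L$; combining this with $\gamma\tau\le\gamma\mu^{-1}$ yields $\lambda_{\max}(H_k)=\gamma\tau/\lambda_{\min}(M)\lesssim(1+\gamma)L/\mu^{2}$, which misses the claimed $b=O(\mu^{-1})$ by a full factor of the condition number. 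The approach can be rescued only by exploiting the coupling $c^2=\BBa/\BBb$ and $\tau=\BBa$, which turns the quantity to be maximized into $\tfrac{\BBb}{2}\bigl((1+\gamma)+\sqrt{(1+\gamma)^2-4\gamma\BBa/\BBb}\bigr)$ and requires a \emph{joint} extremization over $(\BBa,\BBb)\in[L^{-1},\mu^{-1}]^2$ with $\BBa\le\BBb$---precisely the hard step you defer. Moreover its outcome is not what you predict: the extremal value is roughly $(1+\gamma)\mu^{-1}$ for ill-conditioned problems, i.e.\ a \emph{different} (tighter) constant than the stated $b$, and showing it is dominated by $b$ for all admissible $(\BBa,\BBb)$ is an extra, non-vacuous verification (the crude relaxation $(1+\gamma)\BBb\le(1+\gamma)\mu^{-1}$ already exceeds $b$ when $L/\mu$ is close to $1$). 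So as written the plan either produces a bound that is too weak by a condition-number factor, or lands on constants that do not match the statement without further argument; the paper's additive bound on $H_k$ sidesteps both issues because $\BBa$ and $\BBb$ enter linearly.
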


The proof is in Section~\ref{appendix:lem:hkbnds-bfgs}.
%
%We are now ready to state our convergence result.
\begin{theorem}
\label{thm:convergence-bfgs}
Suppose that $f$ is $\mu$-strongly convex and its gradient is $L$-Lipschitz. Let $\gamma>0$, and $a,b$ be given as in Lemma~\ref{lem:hkbnds-bfgs}. Assume that $0 < \underline{\step} \leq \stk \leq \overline{\step} < 2(Lb)^{-1}$. Let $\alpha=1-\tfrac{Lb\overline{\step}}{2}$ and $\eta=\tfrac{L}{2\gamma\mu\underline{\step}}$. Then, the sequence of iterates $(x_k)_{k \in \NN}$ of the L-BFGS forward--backward Algorithm~\ref{alg:LBFGS} (with $H_k$ as in \eqref{eq:BFGS-inv-Hess-formula-derivation}) with $t=1$ converges linearly to the unique minimizer $\xs$, i.e.
\[
\norm{\xk - \xs} \leq \sqrt{\frac{2\parenth{F(x_0) -F(\xs)}}{\mu}}\rho^{k/2} ~,
\]
where $\rho$ is as given in Theorem~\ref{thm:convergence}.
%\begin{equation*}
%\rho = 
%\begin{cases}
%\rho_1 				& \text{ for } \alpha \in ]0,1/2[ \\
%\min(\rho_1,\rho_2) 	& \text{ for } \alpha \in [1/2,1[
%\end{cases} ~,
%\end{equation*}
%with
%\begin{align*}
%\rho_1 = 1 - \alpha\parenth{1-2\parenth{\sqrt{\eta^2+\eta}-\eta}} \quad \text{and} \quad
%\rho_2 = 
%\begin{cases}
%2\eta ~ (\leq 1/2) 	& \text{ if } \eta \leq 1/4 \\
%1 - \frac{1}{8\eta} 	& \text{ otherwise } ~.
%\end{cases}
%\end{align*}
\end{theorem}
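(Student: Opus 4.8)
The plan is to reduce Theorem~\ref{thm:convergence-bfgs} to Theorem~\ref{thm:convergence} by observing that the convergence argument for the 0SR1 method does not actually depend on the specific rank-one structure of $H_k$; it depends only on the uniform spectral bounds $a\Id \preceq H_k \preceq b\Id$ together with the strong convexity and Lipschitz assumptions on $f$. Since Lemma~\ref{lem:hkbnds-bfgs} supplies exactly such bounds for the L-BFGS metric (with the new constants $a=\gamma/((1+\gamma)L)$ and $b=(1+2\gamma)\mu^{-1} - \tfrac{(2+\gamma)\gamma}{1+\gamma}L^{-1}$), the entire machinery behind Theorem~\ref{thm:convergence} should carry over verbatim once the spectral bounds are in place. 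First I would recall the proof of Theorem~\ref{thm:convergence} as an abstract template: it takes as input a sequence of metrics $B_k=H_k^{-1}$ satisfying $b^{-1}\Id \preceq B_k \preceq a^{-1}\Id$ and the step-size window $0<\underline{\step}\le\stk\le\overline{\step}<2(Lb)^{-1}$, and produces the stated linear rate $\rho$ expressed purely through $\alpha=1-\tfrac{Lb\overline{\step}}{2}$ and $\eta=\tfrac{L}{2\gamma\mu\underline{\step}}$.

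The key steps are as follows. First, I would invoke Lemma~\ref{lem:hkbnds-bfgs} to fix the uniform eigenvalue bounds $a\Id\preceq H_k\preceq b\Id$ for the L-BFGS metric, which guarantees that each $B_k\in\sdp(N)$ is well-defined and that the forward--backward step \eqref{eq:fbsr1} is meaningful and computable via Corollary~\ref{cor:proxVrankr}. Second, I would verify that the descent inequality at the heart of the 0SR1 analysis---which compares $F(x_{k+1})$ to $F(x_k)$ using the quadratic model $Q_\step^{B_k}$, the optimality of the proximal step, strong convexity of $f$, and $L$-Lipschitz continuity of $\nabla f$---uses $H_k$ only through the bounds $a,b$ and never through its explicit rank-one form. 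Third, with the same $\alpha$ and $\eta$ now evaluated at the L-BFGS constants $a,b$, I would conclude that the identical recursion on $F(x_k)-F(\xs)$ holds, and hence the same closed-form rate $\rho$ (with its case split on $\alpha\in\,]0,1/2[$ versus $\alpha\in[1/2,1[$) is obtained. The final conversion from function-value decay to iterate decay $\norm{\xk-\xs}$ uses only $\mu$-strong convexity of $F$, which is unchanged.

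The main obstacle, and the only place requiring genuine care, is confirming that the 0SR1 proof is genuinely metric-agnostic---that is, that no step secretly exploits the rank-one update $H_k=H_0+u_ku_k^\top$ or the particular Barzilai--Borwein scaling beyond what is already absorbed into the constants $a$ and $b$. In particular, I would scrutinize whether the definition of $\eta=\tfrac{L}{2\gamma\mu\underline{\step}}$ depends on the SR1-specific choice $H_0=\gamma\BBa\Id$ in a way that differs from the L-BFGS choice $H_k^0=\gamma\tau\Id$; since both use the same $\gamma$ and the same spectral structure for $H_0$, I expect $\eta$ to retain its form, but this parameter identification is the delicate point. Because Theorem~\ref{thm:convergence-bfgs} explicitly states ``$\rho$ is as given in Theorem~\ref{thm:convergence},'' the intended proof is almost certainly this reduction: substitute the L-BFGS eigenvalue bounds of Lemma~\ref{lem:hkbnds-bfgs} into the already-established argument and observe that everything goes through unchanged. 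Thus the bulk of the proof is a careful appeal to the proof of Theorem~\ref{thm:convergence} rather than any new estimate.
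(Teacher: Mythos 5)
Your proposal coincides with the paper's own proof: the paper disposes of Theorem~\ref{thm:convergence-bfgs} in a single sentence, stating that the argument of Theorem~\ref{thm:convergence} carries over verbatim once the constants $a$ and $b$ of Lemma~\ref{lem:hkbnds} are replaced by those of Lemma~\ref{lem:hkbnds-bfgs}, which is exactly the metric-agnostic reduction you describe. The delicate point you flag about $\eta$ is well spotted---in the template proof $\eta$ effectively enters as $1/(2a\mu\underline{\step})$, so with the L-BFGS value $a=\gamma/((1+\gamma)L)$ it would pick up a factor $(1+\gamma)$ that the stated $\eta=\tfrac{L}{2\gamma\mu\underline{\step}}$ omits---but that is a bookkeeping issue in the paper's statement, not a gap in your argument.
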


The proof is the same as that of Theorem~\ref{thm:convergence} (Section~\ref{appendix:thm:convergence}) by substituting the constants $a$ and $b$ in Lemma~\ref{lem:hkbnds} with those in Lemma~\ref{lem:hkbnds-bfgs}. Note that the phase diagram in Fig.~\ref{fig:rates} still applies, though the underlying constants are slightly changed.

\begin{remark}
    Let's again illustrate the rate in Theorem~\ref{thm:convergence-bfgs}. We choose $\gamma=1/2$ as in Remark~\ref{rem:ratesr1} so that $a=1/(3L)$, $b=2\mu^{-1} - 5/6L^{-1}$, and $\stk \equiv \overline{\step} = \underline{\step} = 1/(Lb)$.
    Thus $\alpha=1/2$ and $\eta = c(2c-5/6)$, where $c=L/\mu$ is the condition number of the problem. For large $c\gg 1$, the rate of linear convergence is $\rho \approx 1-1/(8\eta) \approx 1 - 1/(16 c^2)$, which is smaller than the one of 0SR1 in Remark~\ref{rem:ratesr1}.  
\end{remark}

%%%%%%%%%%%%%%%%%%%%%%%%%%%%%%%%%%%%%%%%%%%%%%%%%%%
\section{Numerical experiments and comparisons}
\label{sec:results}
In the spirit of reproducible research, and to record the exact algorithmic details, all code for experiments from this paper is available at {\small\url{https://github.com/stephenbeckr/zeroSR1/tree/master/paperExperiments}}.

\subsection{LASSO problem}\label{sec:num-exp-lasso}

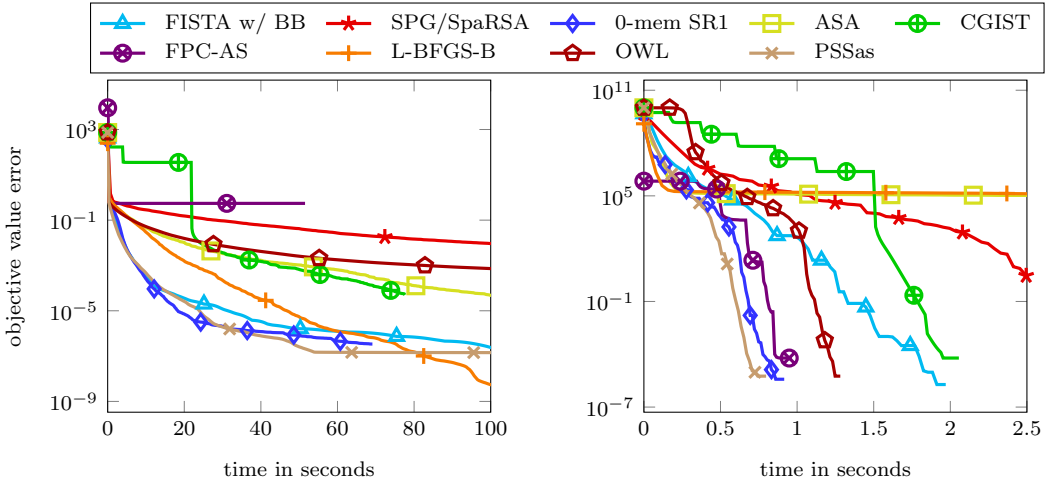
\begin{figure}[t]
  \begin{center}
  \ifcompilePGFfigs
    \begin{tikzpicture}
      \newcommand\markSize{3} % was 2, changing to 3
      \begin{axis}[%
        width=0.44\linewidth,%
        height=6cm,%
        xmin=0,xmax=100,%
        %xmode=log, 
        ymode=log,
        xlabel={time in seconds},%
        ylabel={objective value error},%
        legend columns=5,
        legend cell align=left,
        legend style={at={(1.2,1.02)},anchor=south,font=\footnotesize,column sep=5pt}
        ]
      \addplot[very thick,cyan!80!white,solid,mark=triangle,mark repeat={100},mark size={\markSize},mark options={solid}] %
          table {figs/LASSO_test4_FISTAwithBB_time.dat};
          \addlegendentry{FISTA w/ BB};
      \addplot[very thick,red!90!black,solid,mark=star,mark repeat={100},mark size={\markSize},mark options={solid}] 
          table {figs/LASSO_test4_SPG_SpaRSA_time.dat};
          \addlegendentry{SPG/SpaRSA};
      \addplot[very thick,blue!80,solid,mark=diamond,mark repeat={100},mark size={\markSize},mark options={solid}] 
          table {figs/LASSO_test4_ZeroSR1_time.dat};
          \addlegendentry{0-mem SR1};
      \addplot[very thick,yellow!80!green,solid,mark=square,mark repeat={100},mark size={\markSize},mark options={solid}] %
          table {figs/LASSO_test4_ASA_time.dat};
          \addlegendentry{ASA};
      \addplot[very thick,green!80!black,solid,mark=oplus,mark repeat={100},mark size={\markSize},mark options={solid}] 
          table {figs/LASSO_test4_CGIST_time.dat};
          \addlegendentry{CGIST};
      \addplot[very thick,violet!95!black,solid,mark=otimes,mark repeat={100},mark size={\markSize},mark options={solid}] % changing to mark=otimes not mark=otimes*
          table {figs/LASSO_test4_FPC_AS_time.dat};
          \addlegendentry{FPC-AS};
      \addplot[very thick,orange!97!black,solid,mark=+,mark repeat={100},mark size={\markSize},mark options={solid}] 
          table {figs/LASSO_test4_LBFGSB_time.dat};
          \addlegendentry{L-BFGS-B};
      \addplot[very thick,red!65!black,solid,mark=pentagon,mark repeat={100},mark size={\markSize},mark options={solid}] 
          table {figs/LASSO_test4_OWL_time.dat};
          \addlegendentry{OWL};
      \addplot[very thick,yellow!50!violet,solid,mark=x,mark repeat={100},mark size={\markSize},mark options={solid}] 
          table {figs/LASSO_test4_PSSas_time.dat};
          \addlegendentry{PSSas};
      \end{axis}
      \begin{scope}[xshift=0.47\linewidth]
      \begin{axis}[%
        width=0.44\linewidth,%
        height=6cm,%
        xmin=0,xmax=2.5,%
        %xmode=log, 
        ymode=log,
        xlabel={time in seconds},%
        %ylabel={normalized objective value error},%
        legend columns=5,
        legend cell align=left,
        legend style={at={(1.2,1.02)},anchor=south,font=\footnotesize,column sep=5pt}
        ]
      \addplot[very thick,cyan!80!white,solid,mark=triangle,mark repeat={10},mark size={\markSize},mark options={solid}] %
          table {figs/LASSO_test5_FISTAwithBB_time.dat};
      \addplot[very thick,red!90!black,solid,mark=star,mark repeat={10},mark size={\markSize},mark options={solid}] 
          table {figs/LASSO_test5_SPG_SpaRSA_time.dat};
      \addplot[very thick,blue!80,solid,mark=diamond,mark repeat={10},mark size={\markSize},mark options={solid}] 
          table {figs/LASSO_test5_ZeroSR1_time.dat};
      \addplot[very thick,yellow!80!green,solid,mark=square,mark repeat={10},mark size={\markSize},mark options={solid}] %
          table {figs/LASSO_test5_ASA_time.dat};
      \addplot[very thick,green!80!black,solid,mark=oplus,mark repeat={30},mark size={\markSize},mark options={solid}] 
          table {figs/LASSO_test5_CGIST_time.dat};
      \addplot[very thick,violet!95!black,solid,mark=otimes,mark repeat={20},mark size={\markSize},mark options={solid}] % changing to mark=otimes not mark=otimes*
          table {figs/LASSO_test5_FPC_AS_time.dat};
      \addplot[very thick,orange!97!black,solid,mark=+,mark repeat={20},mark size={\markSize},mark options={}] 
          table {figs/LASSO_test5_LBFGSB_time.dat};
      \addplot[very thick,red!65!black,solid,mark=pentagon,mark repeat={10},mark size={\markSize},mark options={solid}] 
          table {figs/LASSO_test5_OWL_time.dat};
      \addplot[very thick,yellow!50!violet,solid,mark=x,mark repeat={10},mark size={\markSize},mark options={solid}] 
          table {figs/LASSO_test5_PSSas_time.dat};
      \end{axis}
      \end{scope}
    \end{tikzpicture} 
    \else
      Use \texttt{ifcompilePGFfigstrue} to compile this figure!
    \fi
  \end{center}
  \caption{\label{fig:lasso}Convergence plots for the methods described in Section~\ref{sec:results} for solving the $\ell_1$ LASSO problem. The plot on the left corresponds to the experiment with the random matrix and the right plot to the experiment with the differential operator. The vertical axis is the same for both plots. The proposed 0-mem SR1 method and PSSas efficiently solves both problems. While our method generalizes easily to the $\ell_1-\ell_2$ sparsity norm, PSSas is hard to generalize.}
\end{figure}
% can also call \subcaption directly, like:
%\begin{figure}
    %\begin{minipage}[b]{.5\linewidth}
        %\centering\large A
        %\subcaption{A subfigure}\label{fig:1a}
    %\end{minipage}%
    %\begin{minipage}[b]{.5\linewidth}
        %\centering\large B
        %\subcaption{Another subfigure}\label{fig:1b}
    %\end{minipage}
    %\caption{A figure}\label{fig:1}
%\end{figure

Consider the unconstrained LASSO problem \eqref{eq:LASSO}.  Many codes, such as \cite{DhillonQuasiNewton} and L-BFGS-B~\cite{LBFGSB}, handle only non-negativity or box-constraints. Using the standard change of variables by introducing the positive and negative parts of $x$, the LASSO can be recast as 
\begin{equation} \label{eq:changeOfVariables}
  \min_{ x_{+}, x_{-} \ge 0 } \frac{1}{2}\| Ax_+ - Ax_- -b\|^2 + \lambda \ones^\top(x_+ + x_-)
\end{equation}
and then $x$ is recovered via $x=x_+ - x_-$. With such a formulation solvers such as L-BFGS-B are applicable.  However, this constrained problem has twice the number of variables, and the Hessian of the quadratic part changes from $A^\top A$ to $\tilde{A}=\begin{pmatrix}A^\top A & -A^\top A \\-A^\top A & A^\top A \end{pmatrix}$ which necessarily has (at least) $n$ degenerate 0 eigenvalues and adversely affects solvers.

A similar situation occurs with the hinge-loss function. Consider the shifted and reversed hinge loss function $ h(x) = \max( 0, x )$. Then one can split $x=x_+ - x_-$, add constraints $x_+ \ge0, x_- \ge 0$, and replace $h(x)$ with $ \ones^\top(x_+)$. As before, the Hessian gains $n$ degenerate eigenvalues.

We compared our proposed algorithm on the LASSO problem. The first example, on the left of Figure~\ref{fig:lasso}, is a typical example from compressed sensing that takes $A\in \RR^{m \times n}$ to have iid $\mathcal{N}(0,1)$ entries with $m=1500$ and $n=3000$. We set $\lambda=0.1$. L-BFGS-B does very well, followed closely by our proposed SR1 algorithm, PSSas, and FISTA. Note that L-BFGS-B and ASA are in Fortran and C, respectively (the other algorithms are in Matlab).

\begin{table} 
\scriptsize 
\begin{tabular}{l@{}l@{}ll}
\toprule
Acronym & Algorithm Name & Tests & Comments \\
\midrule
FISTA & Fast IST Algorithm & \S\ref{sec:num-exp-lasso},\ref{sec:num-exp-l1l2-lasso} & our own implementation in Matlab \\ 
SPG/SpaRSA\ \  & Spectral Projected Gradient\cite{SPG} as used in \cite{WrightSparsa08} &  \S\ref{sec:num-exp-lasso},\ref{sec:num-exp-l1l2-lasso} & Matlab version from \cite{WrightSparsa08} \\
\midrule
L-BFGS-B & Limited memory, box-constrained BFGS\cite{LBFGSB,L-BFGS-B-97}& \S\ref{sec:num-exp-lasso} & Fortran with Matlab wrapper \\
ASA & ``Active Set Algorithm'' (conjugate gradient) \cite{HagerZhang06} \ \ & \S\ref{sec:num-exp-lasso} & C with Matlab wrapper, ver.\ 2.2 \\
OWL & Orthant-wise Learning~\cite{AndrewGao07} & \S\ref{sec:num-exp-lasso} &  Active set; Matlab \\
PSSas & Projected Scaled Sub-gradient + Active Set~\cite{Schmidt2007a} & \S\ref{sec:num-exp-lasso} & Matlab \\
CGIST & ``CG + IST'' ~\cite{GoldsteinSetzer11} & \S\ref{sec:num-exp-lasso} & Matlab \\
FPC-AS & ``Fixed-point continuation + Active Set'' \cite{FPCAS} & \S\ref{sec:num-exp-lasso} & Matlab, ver.\ 1.21 \\
\midrule 
\textbf{0-mem SR1} & Algorithm \ref{alg:SR1} & \S\ref{sec:num-exp-lasso},\ref{sec:num-exp-l1l2-lasso} & \textbf{our approach} (in Matlab) \\
%\POc{mfZeroSR1}{} & \POc{monotone FISTA based acceleration}{} & \POc{\S\ref{sec:num-exp-l1l2-lasso}}{} & \POc{from \cite{OP17}, based on Algorithm \ref{alg:root-pl-sep}}{} \\
%\POc{TsengZeroSR1}{} & \POc{acceleration based on \cite{Tseng08}}{} & \POc{\S\ref{sec:num-exp-l1l2-lasso}}{} & \POc{from \cite{OP17}, based on Algorithm \ref{alg:root-pl-sep}}{}  \\
\bottomrule
\end{tabular} 
\caption{Algorithms used in experiments of sections \ref{sec:num-exp-lasso} and \ref{sec:num-exp-l1l2-lasso}.
The first two algorithms are standard ``first-order'' algorithms; the next group of algorithms use active-set strategies; and the final group of three algorithms use a diagonal $\pm$ rank-1 proximal mapping.
    Our implementation of FISTA used the Barzilai-Borwein stepsize~\cite{BB88} and line search, and restarted the momentum term every $1000$ iterations~\cite{DC15}. L-BFGS-B and ASA use the reformulation of \eqref{eq:changeOfVariables}.
For L-BFSG-B, we use the updated version~\cite{LBFGSB2011}.
Code for PSSas and OWL (slight variant of \cite{AndrewGao07}) from \cite{SchmidtThesis2010}.
%mfZeroSR1 and TsengZeroSR1 use a diagonal $\pm$ rank-1 proximal mapping.
}
\label{table:otherAlgorithms}
\end{table}
%FISTA w/ BB SPG/SpaRSA 0-mem SR1 ASA
%CGIST FPC-AS L-BFGS-B OWL
%PSSas
%
%FISTA mfZeroSR1 0-mem SR1
%SPG/SpaRSA TsengZeroSR1
%
%mfZeroSR1 (monotone FISTA based acceleration) and TsengZeroSR1 (acceleration based on a method by Tseng \cite{Tseng08}) that were recently proposed in \cite{OP17}. These methods are also based on the diagonal $\pm$ rank-1 proximal mapping proposed in this paper, which they implemented into an acceleration strategy similar to FISTA. 

Our second example uses a square operator $A$ with dimensions $n = 15^3 = 3375$ chosen as a 3D discrete differential operator. This example stems from a numerical analysis problem to solve a discretized PDE as suggested by \cite{FletcherBB}. For this example, we set $\lambda=1$.  For all the solvers, we use the same parameters as in the previous example.  Unlike the previous example, the right of Figure~\ref{fig:lasso} now shows that L-BFGS-B is very slow on this problem. The FPC-AS method, very slow on the earlier test, is now the fastest. However, just as before, our SR1 method is nearly as good as the best algorithm. FISTA is significantly outperformed by our method on this problem. This robustness is one benefit of our approach, since the method does not rely on active-set identifying parameters and inner iteration tolerances. Moreover, the proposed SR1 method easily generalizes to other regularization terms.

%\JF{Add some numerical tests including with the $\ell_1-\ell_2$.}
\subsection{Group LASSO problem} \label{sec:num-exp-l1l2-lasso}

As a second experiment, we replace the $\ell_1$ sparsity term $\norm{x}_{1}$ in \eqref{eq:LASSO} with an $\ell_1-\ell_2$ sparsity $\norm{x}_{2,1}$ as in \eqref{eq:l1-l2-sparsity-term}, which is known to promote group sparsity (hence the name group LASSO). We partition the $N$ coordinates of $x\in\R^N$ into groups $b\in \mathscr B$ with randomly selected size $\abs{b}\leq 12$. For the numerical experiment, the entries of $A$ and $b$ are drawn uniformly in $[0,1]$, and we set $N=2500$, $M=1600$, and $\lambda=1$. As the $\ell_1-\ell_2$ norm is not polyhedral, active set based methods are hard to use. Also L-BFGS-B cannot be used, as the ``trick'' for the $\ell_1$-norm above does no apply here. The emerging rank-1 proximal mapping in our proposed proximal SR1 method can be solved efficiently as described in Section~\ref{sec:examples}. We apply Newton's method in the interval between breakpoints that locates the root. 
%\POc{Besides our 0SR1 method, we test the methods mfZeroSR1 (monotone FISTA based acceleration) and TsengZeroSR1 (acceleration based on a method by Tseng \cite{Tseng08}) that were recently proposed in \cite{OP17}. These methods are also based on the diagonal $\pm$ rank-1 proximal mapping proposed in this paper, which they implemented into an acceleration strategy similar to FISTA.}{} 

Figure~\ref{fig:Comparison-l2-l1-lasso} shows the convergence of several methods in terms of objective value error vs iteration (left plot) or time (right plot). \PO{Our 0SR1 method shows the best performance in the low and medium precision regime, while, for obtaining a high precision, accelerated strategies, such as FISTA, seem to be favorable. Presumably, this comes from the $\ell_2-\ell_1$ norm, which usually activates a whole block of coordinates, unlike in the LASSO case where eventually only a few coordinates are active and thus often has an improved condition number when restricted to these active variables. Acceleration strategies seem to compensate for this effect. In the beginning, the SR1 metric reflects the conditioning of the problem better than isotropic metrics.}

%\PO{Our method based on the diagonal $\pm$ rank-1 proximal mapping outperforms} the state-of-the-art method FISTA. 
\PO{Figure~\ref{fig:Comparison-l2-l1-lasso} also suggests that the improvement with respect to FISTA could be further increased when a more efficient implementation of the diagonal $\pm$ rank-1 proximal mapping is used, or when the rank-1 update is combined with the acceleration strategy as in \cite{OP17}, which we will explore in future work.}

\begin{figure}[t]
  \begin{center}
  \ifcompilePGFfigs
    \begin{tikzpicture}
     \newcommand\markSize{3} % was 2, changing to 3
     \newcommand\markRep{50} %
      \begin{axis}[%
        width=0.44\linewidth,%
        height=6cm,%
%        xmin=0,xmax=3000,%
        xmin=0,xmax=1500,% zooming in
        ymin=0.001, % if we zoom in, then don't make this soo small
        %xmode=log, 
        ymode=log,
        xlabel={iteration},%
        ylabel={objective value error},%
        legend columns=3,
        legend cell align=left,
        legend style={at={(1.2,1.02)},anchor=south,font=\footnotesize,column sep=5pt}
        ]
      \addplot[very thick,cyan!80!white,solid,mark=triangle,mark repeat={\markRep},mark size={\markSize},mark options={solid}] %
          table {figs/GroupLasso_conv_FISTA_iter.dat};
          \addlegendentry{FISTA};
      \addplot[very thick,red!80!black,solid,mark=star,mark repeat={\markRep},mark size={\markSize},mark options={solid}] 
          table {figs/GroupLasso_conv_SPG-SpaRSA_iter.dat};
          \addlegendentry{SPG/SpaRSA};
%      \addplot[very thick,yellow!80!green,solid,mark=square,mark repeat={\markRep},mark size={\markSize},mark options={solid}] %
%          table {figs/GroupLasso_conv_mfzeroSR1_iter.dat};
%          \addlegendentry{mfZeroSR1};
%      \addplot[very thick,green!80!black,solid,mark=*,mark repeat={41},mark size={\markSize},mark options={solid}] 
%          table {figs/GroupLasso_conv_tsengzeroSR1_iter.dat};
%          \addlegendentry{TsengZeroSR1};
      \addplot[very thick,blue!80,solid,mark=diamond,mark repeat={45},mark size={\markSize},mark options={solid}] 
          table {figs/GroupLasso_conv_zeroSR1c_LS_iter.dat};
          \addlegendentry{0-mem SR1};
      \end{axis}
      \begin{scope}[xshift=0.47\linewidth]
      \begin{axis}[%
        width=0.44\linewidth,%
        height=6cm,%
%        xmin=0,xmax=120,%
        xmin=0,xmax=50,% zooming in
%        ymin=0.0000001, %if we zoom in, then don't make this soo small
        ymin=0.001, 
        %xmode=log, 
        ymode=log,
        xlabel={time in seconds},%
        %ylabel={normalized objective value error},%
        legend columns=3,
        legend cell align=left,
        legend style={at={(1.2,1.02)},anchor=south,font=\footnotesize,column sep=5pt}
        ]
     \newcommand\markRepRight{100} %
      \addplot[very thick,cyan!80!white,solid,mark=triangle,mark repeat={\markRepRight},mark size={\markSize},mark options={solid}] %
          table {figs/GroupLasso_conv_FISTA_time.dat};
      \addplot[very thick,red!80!black,solid,mark=star,mark repeat={\markRepRight},mark size={\markSize},mark options={solid}] 
          table {figs/GroupLasso_conv_SPG-SpaRSA_time.dat};
 %     \addplot[very thick,yellow!80!green,solid,mark=square,mark repeat={\markRepRight},mark size={\markSize},mark options={solid}] %
 %         table {figs/GroupLasso_conv_mfzeroSR1_time.dat};
 %     \addplot[very thick,green!80!black,solid,mark=*,mark repeat={\markRepRight},mark size={\markSize},mark options={solid}] 
 %         table {figs/GroupLasso_conv_tsengzeroSR1_time.dat};
      \addplot[very thick,blue!80,solid,mark=diamond,mark repeat={\markRepRight},mark size={\markSize},mark options={solid}] 
          table {figs/GroupLasso_conv_zeroSR1c_LS_time.dat};
      \end{axis}      
      \end{scope}
    \end{tikzpicture} 
    \else
      Use \texttt{ifcompilePGFfigstrue} to compile this figure!
    \fi
  \end{center}
  \caption{\label{fig:Comparison-l2-l1-lasso}Convergence plots for the methods described in Section~\ref{sec:num-exp-l1l2-lasso} for solving the $\ell_1-\ell_2$ LASSO problem. The vertical axis is the same for both plots.  The methods based on the efficient solution of the diagonal $\pm$ rank-1 proximal mapping proposed in this paper outperform comparable methods based on a diagonally scaled proximal mapping.}
\end{figure}
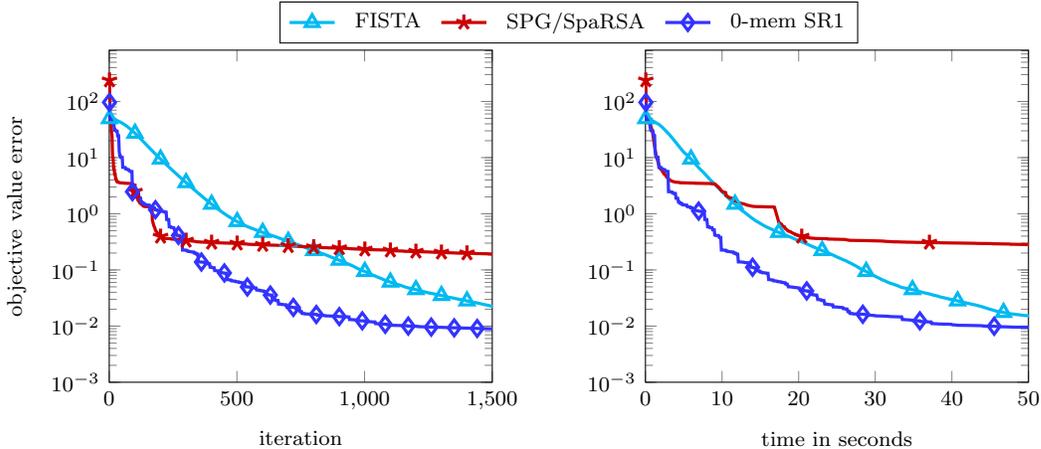

%%%%%%%%%%%%%%%%%%%%%%%%%%%%%%%%%%%%%%%%%%%%%%%%%%%
\section{Conclusions}
\label{sec:conclusion}

In this paper, we proposed a novel framework for variable metric (quasi-Newton) forward--backward splitting algorithms, designed to efficiently solve non-smooth convex problems structured as the sum of a smooth term and a non-smooth one. 
We introduced a class of weighted norms induced by diagonal $\pm$ rank $r$ symmetric positive definite matrices, as well as a calculus to compute the proximity operator in the corresponding induced metrics.
%We proposed a whole framework to compute a proximity operator in metrics induced by matrices of the form diagonal $\pm$ rank-$r$.
%The latter result, which we believe is of independent interest (as even our earlier versions have inspired \cite{KV17}), is distinctly new and generalizes our previous results on the subject.
The latter result is new and generalized our previous results on the subject \cite{BF12}, and we believe it is of independent interest as even the simpler version from \cite{BF12} has been the basis of other works such as \cite{KV17,OP17}.
We also established convergence of the algorithm, and provided clear evidence that the non-diagonal term provides significant acceleration over diagonal matrices.

The proposed method can be extended in several ways. Although we focused on forward--backward splitting, our approach can be easily extended to the new {\em generalized} forward--backward algorithm of \cite{raguet-gfb}. However, if we switch to a primal-dual setting, which is desirable because it can handle more complicated objective functionals, updating $B_k$ is non-obvious, though one could perhaps use our results for a non-diagonal pre-conditioning method. 

Another improvement would be to derive efficient calculation for exact calculation of rank-2 proximity terms, thus allowing our 0-memory BFGS method to have cheaper and more exact update steps (as compared to the semi-smooth Newton method currently suggested). Theorem~\ref{theo:proxVrankr} and Corollary~\ref{cor:proxVrankr} give some clues in this direction. 
%However, in general, one must solve an $r$-dimensional inner problem using the semismooth Newton method.

A final possible extension is to take $B_k$ to be diagonal plus rank-1 on diagonal blocks, since if $h$ is separable, this is still can be solved by our algorithm (see Proposition~\ref{prop:proxVseprank1}). The challenge here is adapting this to a robust quasi-Newton update. For some matrices that are well-approximated by low-rank blocks, such as H-matrices~\cite{Hmatrices}, it may be possible to choose $B_k \equiv B$ to be a fixed preconditioner.

\appendix
\pdfbookmark[0]{Appendix}{appendix} % add a nice PDF bookmark

%%%%%%%%%%%%%%%%%%%%%%%%%%%%%%%%%%%%%%%%%%%%%%%%%%%%
\section{Elements from convex analysis}
\label{sec:appendix}
We here collect some results from convex analysis that are key for our proof. Some lemmata are listed without proof and can be either easily proved or found in standard references such as ~\cite{Rockafellar70,BauschkeCombettes11}. 

\subsection{Background}
%%%%%%%%%%%%%%%%%%%%%%%%%%%%%%%%%%%%%%%%%%%%%%%%%%%%
\paragraph*{Functions}

\begin{definition}[Indicator function]
\label{def:ind}
Let $\C$ a nonempty subset of $\Hm$. The indicator function $\indic_{\C}$ of $\C$ is 
\[
\label{eq:ind}
\indic_{\C} (x) =
  \begin{cases}
    0, & \text{if } x \in \C ~ ,\\
    +\infty, & \text{otherwise}.
  \end{cases}
\]
$\dom(\indic_\C)=\C$.
\end{definition}

\begin{definition}[Infimal convolution]  
Let $\f_1$ and $\f_2$ two functions from $\Hm$ to $\RR \cup \acc{+\infty}$. Their infimal convolution is the function from $\Hm$ to $\RR \cup \acc{\pm\infty}$ defined by:
\[
    (\f_1 \infc \f_2)(x) = \inf\acc{\f_1(x_1) + \f_2(x_2): x_1+x_2=x} = \inf_{y\in\Hm} \f_1 (y) + \f_2(x-y)~.
\]
%It is called exact at $x=\bar{x}_1+\bar{x}_2$ if the infimum is attained at (non-necessarily unique) $(\bar{x}_1,\bar{x}_2)$. 
\end{definition}

%%%%%%%%%%%%%%%%%%%%%%%%%%%%%%%%%%%%%%%%%%%%%%%%%%%%
\paragraph*{Conjugacy}

\begin{definition}[Conjugate]
Let $\f: \Hm \to \RR \cup \acc{+\infty}$ having a minorizing affine function. The conjugate or Legendre-Fenchel transform of $\f$ on $\Hm$ is the function $\f^*$ defined by
\[
\label{eq:conj}
\f^*(v) = \sup_{x \in \dom(\f)} \pds{v}{x} - \f(x) ~ .
\]
\end{definition}

\begin{lemma}[Calculus rules]
\label{lem:conjcalc}
{~}\\
\vspace*{-0.5cm}
\begin{enumerate}[label={\rm (\roman{*})}, ref={\rm (\roman{*})}]
\item \label{conjaddcst} $(\f(x)+t)^*(v) = \f^*(v)-t$.
\item \label{conjscale} $(t \f(x))^*(v) = tf^*(v/t)$, $t > 0$.
\item \label{conjlin} $(\f \circ A)^* = \f^*\circ\parenth{A^{-1}}^*$ if $A$ is a linear invertible operator.
\item \label{conjtrans} $(\f(x-x_0))^*(v) = \f^*(v) + \pds{v}{x_0}$.
\item \label{conjsep} Separability: $\parenth{\sum_{i=1}^n \f_i(x_i)}^*(v_1,\cdots,v_n) = \sum_{i=1}^n \f_i^*(v_i)$, where $(x_1,\cdots,x_n)\in\Hm_1\times\cdots\times\Hm_n$.
\item \label{conjsum} Conjugate of a sum: assume $\f_1,\f_2 \in \Gamma_0(\Hm)$ and the relative interiors of their domains have a nonempty intersection. Then
\[
(\f_1 + \f_2)^* = \f_1^* \infc \f_2^*~.
\]
\item \label{conjHV} For $V \in \sdp(N)$, the conjugate of $f$ in $\Hm_V$ is $\f^*(V u)$.
\end{enumerate}
\end{lemma}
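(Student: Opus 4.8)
The plan is to derive each identity directly from the definition of the Legendre--Fenchel conjugate, $\f^*(v)=\sup_{x}\pds{v}{x}-\f(x)$. Items \ref{conjaddcst}--\ref{conjsep} and \ref{conjHV} are each a one- or two-line manipulation of this defining supremum, whereas item \ref{conjsum} is a genuine duality theorem that I would treat separately.

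First I would dispatch the elementary rules. For \ref{conjaddcst} the additive constant factors out of the supremum unchanged, giving $\f^*(v)-t$; for \ref{conjscale} pulling $t>0$ out of $\sup_{x}\pds{v}{x}-t\f(x)=t\sup_{x}\pds{v/t}{x}-\f(x)$ gives $t\f^*(v/t)$. Items \ref{conjlin} and \ref{conjtrans} are change-of-variable arguments: in \ref{conjlin} the substitution $y=Ax$ is a bijection because $A$ is invertible, and $\pds{v}{A^{-1}y}=\pds{\parenth{A^{-1}}^{*}v}{y}$ turns the supremum into $\f^{*}\parenth{\parenth{A^{-1}}^{*}v}$; in \ref{conjtrans} the shift $y=x-x_{0}$ produces the extra linear term $\pds{v}{x_{0}}$. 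Separability \ref{conjsep} holds because the supremum over the product space decouples coordinatewise into a sum of suprema. For the metric rule \ref{conjHV} I would unfold the $\Hm_{V}$ pairing: the conjugate of $\f$ taken with respect to $\pds{\cdot}{\cdot}_{V}$ at a point $u$ equals $\sup_{x}\pds{u}{x}_{V}-\f(x)=\sup_{x}\pds{Vu}{x}-\f(x)=\f^{*}(Vu)$, where symmetry of $V$ is used to move it onto the left argument.

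The only substantive step is \ref{conjsum}. Expanding $(\f_{1}+\f_{2})^{*}(v)=\sup_{x}\pds{v}{x}-\f_{1}(x)-\f_{2}(x)$ and splitting $v=v_{1}+v_{2}$ yields only the easy inequality $(\f_{1}+\f_{2})^{*}\le \f_{1}^{*}\infc\f_{2}^{*}$; the reverse inequality, together with attainment of the infimal convolution, is exactly where the qualification $\ri(\dom\f_{1})\cap\ri(\dom\f_{2})\neq\emptyset$ is needed. I would establish it via Fenchel--Rockafellar duality: under this constraint qualification strong duality holds and the dual value is attained, which upgrades the inequality to the claimed equality. Since this is classical, I would cite \cite{Rockafellar70,BauschkeCombettes11} rather than reproduce the separation argument; this attainment issue under the relative-interior hypothesis is the main obstacle of the lemma.
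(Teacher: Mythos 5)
Your proposal is correct, and it matches the paper's treatment: the paper states this lemma without proof, noting that these rules "can be either easily proved or found in standard references such as \cite{Rockafellar70,BauschkeCombettes11}", which is precisely your strategy of elementary manipulations of the defining supremum for items \ref{conjaddcst}--\ref{conjsep} and \ref{conjHV}, plus an appeal to classical Fenchel--Rockafellar duality under the relative-interior qualification for item \ref{conjsum}. You also correctly isolate the only nontrivial point, namely exactness and attainment of the infimal convolution in \ref{conjsum}.
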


\begin{lemma}[Conjugate of a degenerate quadratic function]
\label{lem:conjquad}
Let $Q$ be a symmetric positive semi-definite matrix. Let $Q^+$ be its Moore-Penrose pseudo-inverse. Then,
\[
\label{eq:conjquad}
\parenth{\frac{1}{2}\norm{y - \cdot}^2_Q}^*(v) = 
\begin{cases}
\frac{1}{2} \norm{y - v}_{Q^{+}}^2 & \text{if } v \in y + \Span(Q) ~,\\
+\infty & \text{otherwise} ~.
\end{cases}
\]
\end{lemma}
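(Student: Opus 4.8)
The plan is to reduce the problem to a centered, purely quadratic conjugate and then deal with the degeneracy of $Q$ through its spectral decomposition. First I would exploit that the quadratic form is even, so that $\frac12\norm{y-\cdot}_Q^2 = f(\cdot - y)$ with $f(z) = \frac12\norm{z}_Q^2 = \frac12\pds{z}{Qz}$. The translation rule for conjugates (Lemma~\ref{lem:conjcalc}\ref{conjtrans}) then gives
\[
\parenth{\tfrac12\norm{y-\cdot}_Q^2}^*(v) = f^*(v) + \pds{v}{y},
\]
so the whole computation reduces to identifying $f^*$, the Fenchel conjugate of the (possibly degenerate) quadratic form associated with $Q$.

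Next I would diagonalize. Writing the spectral decomposition $Q = \sum_{i=1}^N \lambda_i\, q_i q_i^\top$ with orthonormal eigenvectors $q_i$ and $\lambda_i \ge 0$, and passing to eigencoordinates $z_i = \pds{q_i}{z}$, $w_i = \pds{q_i}{v}$, the form becomes separable, $f(z) = \frac12\sum_i \lambda_i z_i^2$. By the separability rule (Lemma~\ref{lem:conjcalc}\ref{conjsep}) the conjugate splits into one-dimensional conjugates. For each index with $\lambda_i>0$ the elementary identity $\parenth{\frac12\lambda_i z_i^2}^*(w_i)=\frac{1}{2\lambda_i}w_i^2$ applies, whereas for each index with $\lambda_i=0$ the primal coordinate is the zero function, whose conjugate is the indicator $\indic_{\{0\}}$. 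These indicator terms are exactly the source of the finiteness constraint: $f^*(v)$ is finite only when $w_i=0$ for every $i$ with $\lambda_i=0$, that is, when $v \in \Span(Q)$ (equivalently, $v \perp \Ker(Q)$).

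Reassembling the surviving coordinates yields $f^*(v) = \frac12\sum_{\lambda_i>0}\lambda_i^{-1}w_i^2 = \frac12\pds{v}{Q^+ v} = \frac12\norm{v}_{Q^+}^2$ on $\Span(Q)$, since $Q^+ = \sum_{\lambda_i>0}\lambda_i^{-1}q_i q_i^\top$. Adding back the shift from the first step then produces the closed form of the conjugate, finite precisely on $\Span(Q)$ and $+\infty$ off this subspace. (I note in passing that the nondegenerate case $Q\in\sdp(N)$ is the classical conjugate of a strictly convex quadratic and is immediate; the content of the lemma is entirely in the degenerate case.)

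The only genuine obstacle is this degeneracy of $Q$, i.e.\ $\Ker(Q)\neq\{0\}$. The kernel directions are precisely those along which the primal quadratic is flat, so the supremum defining the conjugate escapes to $+\infty$ unless $v$ is orthogonal to $\Ker(Q)$, which gives the subspace restriction; and on $\Span(Q)$ the inversion of $Q$ must be understood in the Moore--Penrose sense. To make the reduced quadratic rigorously equal to $\frac12\norm{\cdot}_{Q^+}^2$ I would verify explicitly the two pseudo-inverse identities $Q^+ Q Q^+ = Q^+$ and $\Span(Q^+) = \Span(Q)$, both of which are transparent from the spectral decomposition above. Everything else is routine.
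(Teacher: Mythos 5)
Your reduction---translation rule first, then a spectral/separable computation of the conjugate of $z\mapsto\tfrac12\pds{z}{Qz}$---is sound, and there is in fact no proof in the paper to compare against: this lemma sits in the appendix among the facts ``listed without proof.'' The genuine problem is your final assembly step. What your derivation actually yields is
\[
\parenth{\tfrac12\norm{y-\cdot}_Q^2}^*(v)=\tfrac12\norm{v}_{Q^+}^2+\pds{v}{y}\ \text{ for } v\in\Span(Q),\qquad +\infty\ \text{ otherwise},
\]
which is \emph{not} the displayed formula $\tfrac12\norm{y-v}_{Q^+}^2$ on $y+\Span(Q)$: the cross term has the wrong form and sign, there is a spurious constant $\tfrac12\norm{y}_{Q^+}^2$, and the domain you correctly obtain is the linear subspace $\Span(Q)$, not the affine subspace $y+\Span(Q)$. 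You even write ``finite precisely on $\Span(Q)$'' and then assert in the same breath that this ``produces the closed form of the conjugate'' of the statement. It does not. A one-dimensional check settles the matter: for $Q=1$ and $y=2$ the supremum of $vx-\tfrac12(2-x)^2$ is attained at $x=v+2$ and equals $\tfrac12 v^2+2v$, whereas the lemma claims $\tfrac12(2-v)^2$; at $v=1$ these are $2.5$ and $0.5$.

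So the statement as printed is erroneous, and your computation proves the corrected version; the gap in your write-up is that you silently identified the two, when you should have flagged the mismatch. Reassuringly, the corrected formula is exactly what the paper uses where the lemma is invoked: in the proof of Theorem~\ref{theo:proxVrankr}, the conjugate of $\tfrac12\pds{P^{1/2}x-\cdot}{W(P^{1/2}x-\cdot)}$ is replaced by $\tfrac12\norm{w}^2_{W^+}+\pds{P^{1/2}x}{w}$ over $w\in\Span(W)$, i.e.\ the authors combine the centered case $y=0$ of the lemma with the translation rule of Lemma~\ref{lem:conjcalc}, reproducing your formula rather than the one displayed in Lemma~\ref{lem:conjquad}. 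The fix is to restate the lemma with right-hand side $\tfrac12\norm{v}_{Q^+}^2+\pds{v}{y}$ on $\Span(Q)$; with that restatement your proof is complete and correct.
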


\begin{lemma}[Conjugate of a rank-1 quadratic function]
\label{lem:conjrank1}
Let $u \in \Hm$. Then,
\[
\parenth{\frac{1}{2}\pds{u}{\cdot}^2}^*(v) = 
\begin{cases}
\frac{\norm{v}^2}{2\norm{u}^2} & \text{if } v \in \RR u ~,\\
+\infty & \text{otherwise} .
\end{cases}
\]
\end{lemma}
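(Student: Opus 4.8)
The plan is to recognize this statement as the special case $y=0$ of Lemma~\ref{lem:conjquad}, applied to the rank-one positive semi-definite matrix $Q = uu^\top$. Indeed, for every $x \in \Hm$ we have $\frac{1}{2}\pds{u}{x}^2 = \frac{1}{2}\pds{x}{uu^\top x} = \frac{1}{2}\norm{x}_Q^2 = \frac{1}{2}\norm{0-x}_Q^2$, so the function whose conjugate we seek is exactly the degenerate quadratic of Lemma~\ref{lem:conjquad} centered at $y=0$. The whole proof is then the bookkeeping needed to translate the generic formula of that lemma into the stated rank-one expression.

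First I would identify the range of $Q$. Since $uu^\top x = \pds{u}{x} u$, the image of $Q$ is the line $\RR u$. Hence the finiteness condition $v \in y + \Span(Q) = \RR u$ of Lemma~\ref{lem:conjquad} becomes precisely $v \in \RR u$, matching the dichotomy in the statement, and the conjugate equals $+\infty$ off this line.

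It remains to evaluate the finite branch $\frac{1}{2}\norm{v}_{Q^+}^2$ for $v \in \RR u$. I would compute the Moore--Penrose pseudo-inverse of the rank-one matrix explicitly: $Q^+ = uu^\top/\norm{u}^4$; a quick check gives $QQ^+ = uu^\top/\norm{u}^2$, the orthogonal projector onto $\RR u$, while $Q^+$ is symmetric and supported on $\RR u$, which characterizes the pseudo-inverse. Writing $v = tu$ then yields $\norm{v}_{Q^+}^2 = \pds{v}{Q^+ v} = \pds{u}{v}^2/\norm{u}^4 = t^2 = \norm{v}^2/\norm{u}^2$, so that the finite branch equals $\frac{\norm{v}^2}{2\norm{u}^2}$, exactly as claimed.

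There is no real obstacle here; the entire content is the reduction to Lemma~\ref{lem:conjquad} together with the identification $Q^+ = uu^\top/\norm{u}^4$. As a self-contained alternative that avoids the pseudo-inverse, one could argue directly from the definition $f^*(v) = \sup_x \pds{v}{x} - \frac{1}{2}\pds{u}{x}^2$: decomposing $v = v_\parallel + v_\perp$ orthogonally with respect to $u$, the choice $x = \lambda v_\perp$ with $\lambda \to +\infty$ drives the supremum to $+\infty$ whenever $v_\perp \neq 0$ (the quadratic term stays zero while $\pds{v}{x} = \lambda\norm{v_\perp}^2 \to +\infty$), whereas for $v = tu$ the objective depends on $x$ only through $\xi := \pds{u}{x}$ and equals $t\xi - \frac{1}{2}\xi^2$, whose supremum over $\xi \in \RR$ is $t^2/2 = \norm{v}^2/(2\norm{u}^2)$.
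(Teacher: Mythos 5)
Your proof is correct. The paper states Lemma~\ref{lem:conjrank1} without proof: it appears in the appendix among the background facts from convex analysis that the authors say ``can be either easily proved or found in standard references,'' so there is no in-paper argument to compare against. Both of your routes are valid. The reduction to Lemma~\ref{lem:conjquad} with $y=0$ and $Q=uu^\top$ is legitimate: your identification $Q^{+}=uu^\top/\norm{u}^4$ is the correct pseudo-inverse (it is symmetric, supported on $\RR u$, and $QQ^{+}$ is the orthogonal projector onto $\RR u$), $\Span(Q)=\RR u$ gives the right finiteness condition, and the evaluation $\norm{tu}^2_{Q^{+}}=t^2=\norm{v}^2/\norm{u}^2$ is exact. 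The self-contained alternative --- splitting $v$ into components parallel and orthogonal to $u$, sending the supremum to $+\infty$ along $\lambda v_\perp$ when $v_\perp\neq 0$, and otherwise reducing to the scalar problem $\sup_{\xi}\,t\xi-\tfrac{1}{2}\xi^2=t^2/2$ --- is if anything preferable, since it avoids relying on Lemma~\ref{lem:conjquad} (whose general-$y$ form is only needed here at $y=0$, where it is uncontroversial). The only implicit hypothesis, which the lemma statement itself shares, is $u\neq 0$.
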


%%%%%%%%%%%%%%%%%%%%%%%%%%%%%%%%%%%%%%%%%%%%%%%%%%%%
\paragraph*{Subdifferential}
\begin{definition}[Subdifferential]
\label{def:subdiff}
The subdifferential of a proper convex function $\f \in \Gamma_0(\Hm)$ at $x \in \Hm$ is the set-valued map $\partial \f: \Hm \to 2^{\Hm}$
\[
\label{eq:subdiff1}
\partial \f(x) = \left\{v \in \Hm | \forall z \in \Hm, \f(z) \geq \f(x) + \pds{v}{z-x}\right\} ~.
\]
An element $v$ of $\partial \f$ is called a subgradient. 
\end{definition}
The subdifferential map $\partial \f$ is a maximal monotone operator from $\Hm \to 2^{\Hm}$.

\begin{lemma}
\label{lem:subdiffgrad}
If $\f$ is (G\^ateaux) differentiable at $x$, its only subgradient at $x$ is its gradient $\nabla \f(x)$. 
\end{lemma}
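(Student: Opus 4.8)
The plan is to establish the two inclusions $\nabla\f(x)\in\partial\f(x)$ and $\partial\f(x)\subseteq\{\nabla\f(x)\}$ separately, both of which hinge on the elementary monotonicity of difference quotients of convex functions. First I would fix an arbitrary direction $d\in\Hm$ and observe that, since $\f$ is convex, the scalar function $t\mapsto \f(x+td)$ is convex, so the quotient $t\mapsto\bigl(\f(x+td)-\f(x)\bigr)/t$ is nondecreasing on $(0,+\infty)$. G\^ateaux differentiability at $x$ means precisely that this quotient converges to $\pds{\nabla\f(x)}{d}$ as $t\downarrow 0$, and because the quotient is nondecreasing its limit is a lower bound for its value at $t=1$, giving
\[
\pds{\nabla\f(x)}{d}\leq \f(x+d)-\f(x).
\]
Taking $d=z-x$ for arbitrary $z\in\Hm$ yields $\f(z)\geq\f(x)+\pds{\nabla\f(x)}{z-x}$, which is exactly the defining inequality of $\partial\f(x)$ in Definition~\ref{def:subdiff}; hence $\nabla\f(x)\in\partial\f(x)$.

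For uniqueness, I would take any $v\in\partial\f(x)$ and apply the subgradient inequality at the point $z=x+td$ with $t>0$: this gives $\f(x+td)-\f(x)\geq t\pds{v}{d}$. Dividing by $t>0$ and letting $t\downarrow 0$ produces $\pds{\nabla\f(x)}{d}\geq\pds{v}{d}$ for every $d\in\Hm$. Replacing $d$ by $-d$ gives the reverse inequality, so $\pds{\nabla\f(x)-v}{d}=0$ for all $d$, which forces $v=\nabla\f(x)$. Combining the two steps gives $\partial\f(x)=\{\nabla\f(x)\}$.

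The only point requiring any care---and the closest thing to an obstacle---is the passage from the directional to the G\^ateaux derivative: one must invoke the convexity-induced monotonicity of the difference quotients to guarantee that the one-sided limit $\lim_{t\downarrow 0}\bigl(\f(x+td)-\f(x)\bigr)/t$ exists and equals $\pds{\nabla\f(x)}{d}$, rather than merely assuming differentiability along each ray. Once this is in place the remainder of the argument is purely algebraic. Alternatively, the whole statement is recorded in standard references such as \cite{Rockafellar70,BauschkeCombettes11} for finite convex functions that are G\^ateaux differentiable at $x$.
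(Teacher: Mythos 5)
Your proof is correct; the paper itself states this lemma without proof, deferring to the standard references \cite{Rockafellar70,BauschkeCombettes11}, and your argument is precisely the classical one found there: monotonicity of the convex difference quotients gives $\nabla\f(x)\in\partial\f(x)$, and the subgradient inequality along $\pm d$ combined with the G\^ateaux limit forces uniqueness. Both directions are handled carefully, including the one subtle point (existence of the one-sided limit via monotonicity), so nothing is missing.
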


\begin{lemma}
Let $V \in \sdp(N)$. Then $V \partial \f$ is the subdifferential of $\f$ in $\Hm_V$ .
\end{lemma}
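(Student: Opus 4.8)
The plan is to unfold the definition of the subdifferential directly in $\Hm_V$ and reduce the resulting subgradient inequality to the Euclidean one of Definition~\ref{def:subdiff}. Recall from the problem setup that, under the canonical pairing $\pds{\cdot}{\cdot}$, the dual of $\Hm_V$ is $\Hm_{V^{-1}}$; hence a subgradient of $\f$ at $x$ in $\Hm_V$ is an element $w$ of $\Hm_{V^{-1}}$, and we represent it as a vector through the Riesz isomorphism of that dual space, i.e.\ through the inner product $\pds{\cdot}{\cdot}_{V^{-1}}$. With this identification, $w$ is a subgradient of $\f$ at $x$ in $\Hm_V$ precisely when $\f(z) \ge \f(x) + \pds{w}{z-x}_{V^{-1}}$ holds for every $z \in \Hm$.

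The core step is then a one-line change of pairing. Since $\pds{w}{z-x}_{V^{-1}} = \pds{V^{-1}w}{z-x}$, the inequality above is equivalent to $\f(z) \ge \f(x) + \pds{V^{-1}w}{z-x}$ for all $z$, which by Definition~\ref{def:subdiff} says exactly that $V^{-1}w \in \partial\f(x)$, with $\partial\f$ the Euclidean subdifferential. Thus $w$ is an $\Hm_V$-subgradient of $\f$ at $x$ if and only if $V^{-1}w \in \partial\f(x)$, equivalently $w \in V\,\partial\f(x)$. Letting $x$ range over $\Hm$ yields the asserted operator identity $\partial_{\Hm_V}\f = V\,\partial\f$.

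To be safe I would cross-check this against the calculus already in the excerpt. First, the smooth case: by Lemma~\ref{lem:subdiffgrad}, if $\f$ is differentiable then $\partial\f(x)=\{\nabla\f(x)\}$, so the formula predicts the unique $\Hm_V$-subgradient $V\nabla\f(x)$, which is indeed the differential $\pds{\nabla\f(x)}{\cdot}$ represented through $\pds{\cdot}{\cdot}_{V^{-1}}$, confirming consistency. Second, via conjugacy: Fenchel--Young in $\Hm_V$ characterizes $w$ as a subgradient at $x$ by an equality involving the conjugate of $\f$ in $\Hm_V$, which by Lemma~\ref{lem:conjcalc}\ref{conjHV} is $u \mapsto \f^*(Vu)$; substituting this and rewriting the $\Hm_V$-pairing in the Euclidean one collapses the equality to the ordinary Fenchel--Young relation for the pair $(x, V^{-1}w)$, again giving $V^{-1}w \in \partial\f(x)$.

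The only genuine obstacle is the primal/dual bookkeeping: one must stay consistent about the space in which a subgradient is regarded as living ($\Hm_{V^{-1}}$) and the inner product used to turn it into a vector, because an inadvertent substitution of $\pds{\cdot}{\cdot}_V$ for $\pds{\cdot}{\cdot}_{V^{-1}}$ replaces $V$ by $V^{-1}$ and breaks the statement. Once this identification is fixed, the argument reduces to the single substitution above, which is why the result is routinely quoted as standard and left unproved in references such as \cite{Rockafellar70,BauschkeCombettes11}.
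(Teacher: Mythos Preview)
The paper does not prove this lemma; it is among the background convex-analysis results in the appendix that are explicitly ``listed without proof'' and referred to \cite{Rockafellar70,BauschkeCombettes11}. There is thus no paper argument to compare yours against.

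Your main derivation does reach $V\partial\f$, but only under the convention that the subgradient is paired through $\pds{\cdot}{\cdot}_{V^{-1}}$, and your conjugacy cross-check is inconsistent with that choice. Lemma~\ref{lem:conjcalc}\ref{conjHV} computes the $\Hm_V$-conjugate using the pairing $\pds{\cdot}{\cdot}_V$, giving $u\mapsto\f^*(Vu)$; with that conjugate, the Fenchel--Young equality reads $\f(x)+\f^*(Vw)=\pds{w}{x}_V=\pds{Vw}{x}$, hence $Vw\in\partial\f(x)$ and $w\in V^{-1}\partial\f(x)$---the opposite of what you assert the check confirms. Note also that the paper's own identity $\prox^V_\f=(\Id+V^{-1}\partial\f)^{-1}$ in \eqref{eq:def:prox-in-Hv} aligns with the $V^{-1}\partial\f$ reading under the standard Hilbert-space convention. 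The outcome $V\partial\f$ versus $V^{-1}\partial\f$ is entirely a matter of which inner product is used to represent the dual, exactly as you warn in your final paragraph; but the cross-check as written falls into the very trap it describes. To repair it, either drop the appeal to \ref{conjHV} or recompute the conjugate with the $V^{-1}$-pairing (which would give $u\mapsto\f^*(V^{-1}u)$, and then the Fenchel--Young check does close correctly).
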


%%%%%%%%%%%%%%%%%%%%%%%%%%%%%%%%%%%%%%%%%%%%%%%%%%%%
The duality formulae to be stated shortly will be very useful throughout the rest of the paper.
\paragraph*{Fenchel duality}
\begin{lemma}
\label{lem:fencheldual}
Let $\f \in \Gamma_0(\Hm)$ and $g \in \Gamma_0(\Hm)$. Suppose that $0 \in \ri\parenth{\dom g - \dom \f}$. Then
\be
\label{eq:fencheldual1}
\inf_{x \in \Hm} \f(x) + g(x) = -\min_{u \in \Hm} \f^*(-u) + g^*(u) ~,
\ee
with the extremality relashionships between $\xs$ and $u^\star$, respectively the solutions of the primal and dual problems 
%\be
%\label{eq:fencheldual2}
%\f(\xs) + \f^*(-L^*u^\star) &=& \pds{-L^*u^\star}{\xs} , \\
%g(A \xs) + g^*(u^\star) &=& \pds{u^\star}{A \xs} ,
%\ee
%or equivalently
\be
\label{eq:fencheldual3}
\begin{split}
\xs 	 &\in \partial \f^*(-u^\star) 	\quad & \text{ and } & \quad u^\star \in \partial g(\xs) ~,\\
-u^\star &\in \partial \f(\xs) 		\quad & \text{ and } & \quad \xs \in \partial g^*(u^\star) ~.
\end{split}
\ee
\end{lemma}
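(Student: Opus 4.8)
The plan is to prove the statement in three stages: weak duality, strong duality with attainment of the dual optimum, and then the extremality relationships as a byproduct of the equality case. Weak duality is the easy half. For arbitrary $x, u \in \Hm$, the Fenchel--Young inequality applied to $\f$ at the pair $(x,-u)$ and to $g$ at the pair $(x,u)$ gives $\f(x) + \f^*(-u) \ge \pds{x}{-u}$ and $g(x) + g^*(u) \ge \pds{x}{u}$. Summing these, the linear terms cancel and we get $\f(x) + g(x) \ge -\f^*(-u) - g^*(u)$; taking the infimum over $x$ on the left and the supremum over $u$ on the right yields $\inf_x \f(x)+g(x) \ge -\inf_u \parenth{\f^*(-u) + g^*(u)}$.

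For the reverse inequality together with attainment, I would pivot through the conjugate of the sum evaluated at the origin. By definition of the conjugate, $(\f+g)^*(0) = \sup_x\acc{-(\f+g)(x)} = -\inf_x \f(x)+g(x)$. On the other hand, since $\f,g\in\Gamma_0(\Hm)$ their domains are convex and the hypothesis $0 \in \ri(\dom g - \dom \f)$ is equivalent to $\ri(\dom \f) \cap \ri(\dom g) \neq \emptyset$; hence Lemma~\ref{lem:conjcalc}\ref{conjsum} applies and gives $(\f+g)^* = \f^* \infc g^*$. Evaluating the infimal convolution at $0$ and substituting $u = -y$ in the defining infimum, $(\f^* \infc g^*)(0) = \inf_y \f^*(y) + g^*(-y) = \inf_u \parenth{\f^*(-u) + g^*(u)}$. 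Equating the two expressions for $(\f+g)^*(0)$ delivers \eqref{eq:fencheldual1} at the level of values, and the upgrade from $\inf$ to $\min$ follows from the exactness of the infimal convolution under this same qualification condition, which ensures the defining infimum is attained at some $u^\star$.

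The extremality relationships \eqref{eq:fencheldual3} then fall out of the equality case. Let $\xs$ minimize the primal and $u^\star$ the dual. By strong duality $\f(\xs)+g(\xs) = -\f^*(-u^\star)-g^*(u^\star)$, so the sum of the two Fenchel--Young gaps used in the weak-duality step vanishes; since each gap is nonnegative, both vanish individually, i.e.\ $\f(\xs)+\f^*(-u^\star)=\pds{\xs}{-u^\star}$ and $g(\xs)+g^*(u^\star)=\pds{\xs}{u^\star}$. Using the standard characterization that equality in Fenchel--Young means the dual variable lies in the subdifferential (and, via $\f^{**}=\f$ for $\f\in\Gamma_0(\Hm)$, its inverse form), the first identity yields $-u^\star\in\partial\f(\xs)$ and equivalently $\xs\in\partial\f^*(-u^\star)$, while the second yields $u^\star\in\partial g(\xs)$ and equivalently $\xs\in\partial g^*(u^\star)$, which are exactly the four relations in \eqref{eq:fencheldual3}.

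The main obstacle is the attainment assertion in the second stage. Weak duality and the equality of optimal values are formal consequences of Fenchel--Young and the conjugate-of-sum identity, but promoting the dual $\inf$ to a genuine $\min$ is where the qualification condition $0\in\ri(\dom g-\dom\f)$ is indispensable: it is precisely what forces the infimal convolution $\f^*\infc g^*$ to be exact at $0$. This exactness is typically obtained from a separation argument on epigraphs, or quoted directly from \cite{Rockafellar70,BauschkeCombettes11}; absent the qualification a duality gap can open or the dual minimum may fail to be attained.
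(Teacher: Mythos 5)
Your proof is correct. Note that the paper does not actually prove this lemma: it is listed among the convex-analysis preliminaries that are ``listed without proof and can be either easily proved or found in standard references such as \cite{Rockafellar70,BauschkeCombettes11}'', so there is no in-paper argument to compare against. Your route is the standard one and fits the paper's own toolkit: weak duality from two Fenchel--Young inequalities, strong duality by evaluating $(\f+g)^*$ at $0$ and invoking the conjugate-of-a-sum rule (Lemma~\ref{lem:conjcalc}\ref{conjsum}, whose hypothesis $\ri(\dom \f)\cap\ri(\dom g)\neq\emptyset$ is indeed equivalent to $0\in\ri(\dom g-\dom \f)$ in the finite-dimensional setting $\Hm=\RR^N$), and the extremality relations from the vanishing of the two nonnegative Fenchel--Young gaps together with $\f^{**}=\f$. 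You also correctly isolate the one step that genuinely needs the qualification condition and an external ingredient, namely exactness of $\f^*\infc g^*$ at $0$ (equivalently, attainment of the dual minimum), and attribute it appropriately; without that, only the equality of optimal values with an $\inf$ on the dual side would follow.
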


%%%%%%%%%%%%%%%%%%%%%%%%%%%%%%%%%%%%%%%%%%%%%%%%%%%%
\paragraph*{Toland duality}
\begin{lemma}
\label{lem:tolanddual}
Let $\f \in \Gamma_0(\Hm)$ and $g \in \Gamma_0(\Hm)$. Then
\be
\label{eq:tolanddual1}
\inf_{x \in \Hm} \f(x) - g(x) = \min_{u \in \Hm} g^*(u) - \f^*(u) ~.
\ee
If $\f - g$ is coercive, and $u^\star$ solves the dual problem in $u$, then there exists a solution $\xs$ of the primal problem and
\be
\begin{split}
\label{eq:tolanddual3}
\xs 	&\in \partial \f^*(u^\star)	\quad &\text{ and }& \quad u^\star \in \partial g(\xs) ~,\\
u^\star &\in \partial \f(\xs)		\quad &\text{ and }& \quad \xs 	 \in \partial g^*(u^\star) ~.
\end{split}
\ee
\end{lemma}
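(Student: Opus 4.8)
The plan is to obtain the equality \eqref{eq:tolanddual1} purely from biconjugation, and then to extract the extremality relations \eqref{eq:tolanddual3} from a coercivity-based attainment argument. First, for the equality, I would invoke the Fenchel--Moreau theorem: since $g \in \Gamma_0(\Hm)$ one has $g = g^{**}$, so $g(x) = \sup_{u \in \Hm}\bpa{\pds{u}{x} - g^*(u)}$ for every $x$. Substituting this into $\f - g$ gives
\[ \f(x) - g(x) = \inf_{u \in \Hm}\bpa{\f(x) - \pds{u}{x} + g^*(u)}. \]
Taking the infimum over $x$ and exchanging the two infima (always permissible, as it is a single infimum over the product space), the inner minimization over $x$ produces exactly $-\f^*(u)$ because $\f^*(u) = \sup_x\bpa{\pds{u}{x} - \f(x)}$. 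This yields $\inf_x\bpa{\f(x)-g(x)} = \inf_u\bpa{g^*(u) - \f^*(u)}$, which is \eqref{eq:tolanddual1}; note that only $g = g^{**}$ is used here, and the dual attainment (the $\min$) can be justified by the symmetric argument under the same type of hypothesis.

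Next, assume $\f - g$ is coercive and let $u^\star$ attain the dual infimum. To produce a primal solution I would first secure $x^\star$. The key observation is that Young's inequality $g(x) \ge \pds{u^\star}{x} - g^*(u^\star)$ gives, for all $x$,
\[ \f(x) - \pds{u^\star}{x} \ge \bpa{\f(x) - g(x)} - g^*(u^\star), \]
so the linearly perturbed function $x \mapsto \f(x) - \pds{u^\star}{x}$ inherits coercivity from $\f - g$. Being moreover in $\Gamma_0(\Hm)$, it attains its infimum at some $x^\star$ (a coercive proper lsc convex function on $\RR^N$ has compact sublevel sets). Stationarity at $x^\star$ reads $u^\star \in \partial \f(x^\star)$, equivalently $x^\star \in \partial \f^*(u^\star)$ since $(\partial\f)^{-1} = \partial\f^*$ for $\f \in \Gamma_0$; this is the second line of \eqref{eq:tolanddual3}, and it encodes the Fenchel equality $\f(x^\star)+\f^*(u^\star) = \pds{u^\star}{x^\star}$.

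It then remains to close the loop. Combining the dual optimality $g^*(u^\star) - \f^*(u^\star) = \inf_x(\f-g)$ with the Fenchel equality for $\f$, the chain
\[ \f(x^\star) - g(x^\star) \ge \inf_x(\f-g) = g^*(u^\star) - \pds{u^\star}{x^\star} + \f(x^\star) \]
forces $\pds{u^\star}{x^\star} \ge g(x^\star) + g^*(u^\star)$. Young's inequality supplies the reverse, so equality holds, giving $u^\star \in \partial g(x^\star)$ and $x^\star \in \partial g^*(u^\star)$, i.e. the first line of \eqref{eq:tolanddual3}. Since every inequality in the chain is now an equality, $\f(x^\star) - g(x^\star) = \inf_x(\f-g)$, so $x^\star$ indeed solves the primal problem.

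The one delicate point, and the step I expect to require the most care, is the attainment of $x^\star$: translating coercivity of the \emph{difference} $\f - g$ into coercivity of the convex function $\f - \pds{u^\star}{\cdot}$, and checking that this function is proper with finite infimum so that the $\Gamma_0$ attainment theorem applies. The Young-inequality bound displayed above is exactly what makes this transfer work; everything else reduces to bookkeeping with the two Fenchel equalities.
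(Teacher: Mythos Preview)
Your argument is correct. The paper does not prove this lemma directly: its entire proof is a pointer to \cite[Theorems~2.2, 2.7 and 2.8]{Toland79}. What you have written is essentially a self-contained unpacking of Toland's reasoning. The duality identity via $g=g^{**}$ and swapping the two infima is exactly the mechanism behind Toland's Theorem~2.2, and your attainment step---transferring coercivity from $\f-g$ to the convex function $\f-\pds{u^\star}{\cdot}$ through the Young bound, then reading off both Fenchel equalities---is the standard route to the extremality relations. The only point worth tightening is the one you already flag: for the coercivity transfer you need $g^*(u^\star)<+\infty$, which follows because $u^\star$ attains a finite value of $g^*-\f^*$ (the case where the common optimal value is $-\infty$ makes the second part of the statement vacuous). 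So your version is strictly more informative than the paper's citation-only proof, and essentially reproduces the cited source rather than departing from it.
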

 
\begin{proof}
The first assertion is a consequence of \cite[Theorem~2.2]{Toland79}. The extremality relationships follow by combining \cite[Theorem~2.7 and 2.8]{Toland79}.

\end{proof}
%%%%%%%%%%%%%%%%%%%%%%%%%%%%%%%%%%%%%%%%%%%%%%%%%%%
%%%%%%%%%%%%%%%%%%%%%%%%%%%%%%%%%%%%%%%%%%%%%%%%%%%
%%%%%%%%%%%%%%%%%%%%%%%%%%%%%%%%%%%%%%%%%%%%%%%%%%%
\subsection{Proximal calculus in \texorpdfstring{$\Hm$}{H}}
\label{sec:appendix:prox}

\begin{definition}[Moreau envelope \cite{Moreau1962}]
\label{def:env} 
The function $\env{\f}{\rho}(x) = \inf_{z\in \Hm} \frac{1}{2\rho}\norm{x-z}^{2} + \f(z)$ for $0 < \rho < +\infty$ is the \textit{Moreau envelope} of index $\rho$ of $\f$.
\end{definition}

$\env{\f}{\rho}$ is also the infimal convolution of $\f$ with $\frac{1}{2\rho}\norm{\cdot}^2$.

%%%%%%%%%%%%%%%%%%%%%%%%%%%%%%%%%%%%%%%%%%%%%%%%%%%

\begin{lemma}
\label{lem:proxcalc}
{~}\\
\vspace{-0.5cm}
\begin{enumerate}[label={\rm (\roman{*})}, ref={\rm (\roman{*})}]
\item \label{proxtrans} Translation: $\prox_{\f(\cdot-y)}(x) = y + \prox_\f(x-y)$.
\item \label{proxscale} Scaling: $\forall \rho \in (-\infty,\infty), \prox_{\f(\rho \cdot)}(x) = \prox_{\rho^2f}(\rho x)/\rho$. 
\item \label{proxsep}   Separability~: let $(\f_i)_{1\le i\le n}$ a family of functions each in $\Gamma_0(\RR)$ and $\f(x) = \sum_{i=1}^N \f_i(x_i)$. Then $\f$ is in $\Gamma_0(\Hm)$ and $\prox_\f = \parenth{\prox_{\f_i}}_{1\le i \le N}$. 
\end{enumerate}
\end{lemma}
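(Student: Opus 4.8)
The plan is to derive all three identities directly from the variational definition $\prox_\f(x) = \argmin_{z\in\Hm} \tfrac12\norm{x-z}^2 + \f(z)$ (Definition~\ref{def:prox}), which is legitimate since in each case the relevant function lies in $\Gamma_0(\Hm)$, so the minimizer exists and is unique. The unifying device for the first two rules is an affine change of variable in the minimization; for the third it is the fact that the objective decouples across coordinates.

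For the translation rule~\ref{proxtrans}, I would write $\prox_{\f(\cdot-y)}(x) = \argmin_z \tfrac12\norm{x-z}^2 + \f(z-y)$ and substitute $w = z-y$. Since $\norm{x-z}^2 = \norm{(x-y)-w}^2$, the problem becomes $\argmin_w \tfrac12\norm{(x-y)-w}^2 + \f(w) = \prox_\f(x-y)$; undoing the substitution ($z = w+y$) gives $\prox_{\f(\cdot-y)}(x) = y + \prox_\f(x-y)$. For the scaling rule~\ref{proxscale} (understood for $\rho\neq 0$, since the right-hand side is degenerate at $\rho=0$), I would substitute $w = \rho z$ in $\argmin_z \tfrac12\norm{x-z}^2 + \f(\rho z)$, rewrite $\tfrac12\norm{x-w/\rho}^2 = \tfrac1{2\rho^2}\norm{\rho x - w}^2$, and multiply the objective by the positive constant $\rho^2$ (which leaves the argmin unchanged) to obtain $\argmin_w \tfrac12\norm{\rho x - w}^2 + \rho^2\f(w) = \prox_{\rho^2\f}(\rho x)$. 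Since $z = w/\rho$, this yields $\prox_{\f(\rho\cdot)}(x) = \prox_{\rho^2\f}(\rho x)/\rho$.

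For separability~\ref{proxsep}, I would first verify $\f\in\Gamma_0(\Hm)$: each $\f_i\in\Gamma_0(\RR)$, so $\f(x)=\sum_i \f_i(x_i)$ is proper (its domain is the nonempty product of the $\dom\f_i$), and convex and lsc as a finite sum of convex lsc functions of separate coordinates. Then both $\tfrac12\norm{x-z}^2 = \sum_i \tfrac12(x_i-z_i)^2$ and $\f(z) = \sum_i \f_i(z_i)$ split over coordinates, so the objective defining $\prox_\f(x)$ is a sum of $N$ independent scalar problems; minimizing each separately gives $(\prox_\f(x))_i = \argmin_{z_i}\tfrac12(x_i-z_i)^2 + \f_i(z_i) = \prox_{\f_i}(x_i)$, i.e.\ $\prox_\f = (\prox_{\f_i})_{1\le i\le N}$.

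The only point requiring genuine care is the decoupling step in~\ref{proxsep}: I must justify that coordinatewise minimizers assemble into the global minimizer. This is immediate once one notes that for a separable sum $G(z)=\sum_i g_i(z_i)$ one has $\inf_z G(z) = \sum_i \inf_{z_i} g_i(z_i)$, and here each summand $g_i(z_i)=\tfrac12(x_i-z_i)^2+\f_i(z_i)$ belongs to $\Gamma_0(\RR)$ and is strongly convex, hence attains its unique minimum at $\prox_{\f_i}(x_i)$; the vector of these attains the minimum of $G$. For the first two rules the corresponding subtlety is only that an affine bijection preserves the (unique) argmin exactly, which is what lets us transfer the minimizer back through the substitution.
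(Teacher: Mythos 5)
Your proof is correct. The paper itself does not prove Lemma~\ref{lem:proxcalc}: it is stated in the appendix among results ``listed without proof'' that ``can be either easily proved or found in standard references,'' and your direct verification from the variational definition (affine change of variables for \ref{proxtrans} and \ref{proxscale}, coordinatewise decoupling for \ref{proxsep}) is exactly the standard argument those references would give. Your remark that \ref{proxscale} must be read with $\rho\neq 0$ is a legitimate and correct sharpening of the statement as printed.
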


\begin{lemma}
\label{lem:envlip}
Let $\f \in \Gamma_0(\Hm)$. Then its Moreau envelope $\env{\f}{\rho}$ is convex and Fr\'echet-differentiable with $1/\rho$-Lipschitz gradient
\[
\nabla \env{\f}{\rho} = (\Id - \prox_{\rho \f})/\rho.
\]
\end{lemma}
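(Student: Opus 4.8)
The plan is to exploit the representation of the Moreau envelope as the (exact) infimal convolution $\env{\f}{\rho} = \f \infc \tfrac{1}{2\rho}\norm{\cdot}^2$ recorded right after Definition~\ref{def:env}, and then to pass to conjugates. Convexity is immediate, since the infimal convolution of the two proper convex functions $\f$ and $\tfrac{1}{2\rho}\norm{\cdot}^2$ is convex; moreover, because $\f\in\Gamma_0(\Hm)$ admits an affine minorant while $\tfrac{1}{2\rho}\norm{\cdot}^2$ is coercive, the infimum defining $\env{\f}{\rho}(x)$ is finite and attained at a unique point for every $x$. By Definition~\ref{def:prox} (after rescaling the objective by $\rho$) this minimizer is exactly $p(x) := \prox_{\rho\f}(x)$, so that
\[
\env{\f}{\rho}(x) = \tfrac{1}{2\rho}\norm{x-p(x)}^2 + \f(p(x)), \qquad \tfrac1\rho\bpa{x-p(x)} \in \partial\f\bpa{p(x)},
\]
where the inclusion is the first-order optimality condition for $p(x)$.

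Next I would establish differentiability and the Lipschitz bound via duality. Using the elementary identity $(\f_1 \infc \f_2)^* = \f_1^* + \f_2^*$ together with the direct computation $\bpa{\tfrac{1}{2\rho}\norm{\cdot}^2}^* = \tfrac\rho2\norm{\cdot}^2$, one obtains $(\env{\f}{\rho})^* = \f^* + \tfrac\rho2\norm{\cdot}^2$, which is $\rho$-strongly convex. Since $\env{\f}{\rho}\in\Gamma_0(\Hm)$, biconjugation gives $\env{\f}{\rho} = \bpa{\f^* + \tfrac\rho2\norm{\cdot}^2}^*$, and the standard duality between strong convexity and smoothness --- a proper lsc convex function is $\rho$-strongly convex if and only if its conjugate is Fr\'echet differentiable on $\Hm$ with $\tfrac1\rho$-Lipschitz gradient --- then yields that $\env{\f}{\rho}$ is Fr\'echet differentiable with $\tfrac1\rho$-Lipschitz gradient.

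It remains to identify the gradient. Setting $v := \tfrac1\rho(x-p(x))$, I would verify directly that $v \in \partial\env{\f}{\rho}(x)$: for any $y$ and any $z$, the subgradient inequality $\f(z) \ge \f(p(x)) + \pds{v}{z-p(x)}$ (valid since $v \in \partial\f(p(x))$) gives $\tfrac{1}{2\rho}\norm{y-z}^2 + \f(z) \ge \tfrac{1}{2\rho}\norm{y-z}^2 + \f(p(x)) + \pds{v}{z-p(x)}$; minimizing the resulting quadratic in $z$ (attained at $z = y-\rho v$) and substituting $p(x) = x - \rho v$ collapses the bound to $\env{\f}{\rho}(x) + \pds{v}{y-x}$, so that $\env{\f}{\rho}(y) \ge \env{\f}{\rho}(x) + \pds{v}{y-x}$ for all $y$. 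Because $\env{\f}{\rho}$ is differentiable, Lemma~\ref{lem:subdiffgrad} forces its unique subgradient to coincide with the gradient, whence $\nabla\env{\f}{\rho}(x) = v = (\Id - \prox_{\rho\f})(x)/\rho$.

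The main obstacle is the smoothness step, namely securing the sharp $\tfrac1\rho$-Lipschitz constant for $\nabla\env{\f}{\rho}$. The cleanest route is the strong-convexity/Lipschitz-gradient duality invoked above; alternatively, once the gradient formula is in hand, one may observe that $\prox_{\rho\f}$, being the resolvent $(\Id+\rho\partial\f)^{-1}$ of a maximal monotone operator, is firmly nonexpansive, so $\Id-\prox_{\rho\f}$ is nonexpansive and $\tfrac1\rho(\Id-\prox_{\rho\f})$ is $\tfrac1\rho$-Lipschitz. Either way, the only nontrivial input is a standard monotone-operator or convex-duality fact rather than a hands-on estimate.
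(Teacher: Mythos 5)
Your proof is correct. The paper itself gives no proof of Lemma~\ref{lem:envlip}: it is one of the facts explicitly ``listed without proof'' in the appendix and deferred to standard references such as \cite{Rockafellar70,BauschkeCombettes11}, and your argument --- convexity via infimal convolution, the conjugate identity $(\env{\f}{\rho})^* = \f^* + \tfrac{\rho}{2}\norm{\cdot}^2$ combined with the strong-convexity/Lipschitz-smoothness duality, direct verification that $\tfrac{1}{\rho}(x-\prox_{\rho\f}(x))$ is a subgradient, and the firm nonexpansiveness of the resolvent as an alternative route to the Lipschitz constant --- is precisely the standard proof found in those references, with all steps checking out.
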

\begin{lemma}[Moreau identity]
\label{lem:moreauident}
Let $\f \in \Gamma_0(\Hm)$, then for any $x \in \Hm$
\[
\prox_{\rho \f^*}(x) + \rho \prox_{\f/\rho}(x/\rho) = x, \forall ~ 0 < \rho < +\infty ~.
\]
\end{lemma}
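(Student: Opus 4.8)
The plan is to work directly with the subdifferential characterization $\prox_{g} = (\Id + \partial g)^{-1}$ from Definition~\ref{def:prox} and to exploit the duality between $\partial \f$ and $\partial \f^*$. First I would fix $x \in \Hm$ and $\rho \in (0,+\infty)$ and set $p = \prox_{\rho \f^*}(x)$. By the definition of the proximity operator this is equivalent to $x \in p + \partial(\rho \f^*)(p) = p + \rho\,\partial \f^*(p)$, that is, $(x-p)/\rho \in \partial \f^*(p)$.

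Next I would invoke the conjugate subgradient rule, namely that for $\f \in \Gamma_0(\Hm)$ one has $v \in \partial \f^*(p) \iff p \in \partial \f(v)$ (equivalently $(\partial \f)^{-1} = \partial \f^*$); this is a direct consequence of the Fenchel--Young equality together with the biconjugation theorem $\f^{**} = \f$ valid on $\Gamma_0(\Hm)$. Applying it to the inclusion above with $v = (x-p)/\rho$ yields $p \in \partial \f\bpa{(x-p)/\rho}$. Setting $w = (x-p)/\rho$, this reads $x - \rho w = p \in \partial \f(w)$; dividing by $\rho>0$ and using $\partial(\f/\rho) = (1/\rho)\partial \f$ gives $x/\rho - w \in \partial(\f/\rho)(w)$, which is precisely the optimality condition characterizing $w = \prox_{\f/\rho}(x/\rho)$.

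Combining these facts gives $\rho\,\prox_{\f/\rho}(x/\rho) = \rho w = x - p = x - \prox_{\rho \f^*}(x)$, which is the claimed identity; since $x$ and $\rho$ were arbitrary, the proof is complete. The only nonroutine ingredient is the conjugate subgradient correspondence $(\partial\f)^{-1} = \partial\f^*$, which I would either cite from a standard reference (\eg \cite{Rockafellar70,BauschkeCombettes11}) or justify in one line from $\f^{**}=\f$ and Fenchel--Young; everything else is the bookkeeping of the scaling factors, which must be tracked carefully because the index $\rho$ multiplies $\f^*$ on one side but divides $\f$ and rescales its argument on the other.
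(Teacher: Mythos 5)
Your proof is correct: the chain $x-p\in\rho\,\partial\f^*(p)\iff (x-p)/\rho\in\partial\f^*(p)\iff p\in\partial\f\bpa{(x-p)/\rho}$ together with the rescaling $\partial(\f/\rho)=(1/\rho)\partial\f$ is exactly the standard subdifferential-inversion argument, and the bookkeeping of the factors of $\rho$ is handled correctly. The paper itself states this lemma without proof, deferring to standard references such as \cite{Rockafellar70,BauschkeCombettes11}, and your argument is precisely the routine derivation those references give, so there is nothing further to reconcile.
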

From Lemma~\ref{lem:moreauident}, we conclude that
\[
\prox_{\f^*} = \Id - \prox_{\f}, \quad \prox_{\f^*}(x) \in \partial \f(x) ~.
\]

%%%%%%%%%%%%%%%%%%%%%%%%%%%%%%%%%%%%%%%%%%%%%%%%%%%%%%%%%%%%%%%%%%%%%%%%%%%%%%%%
%% New Section %%%%%%%%%%%%%%%%%%%%%%%%%%%%%%%%%%%%%%%%%%%%%%%%%%%%%%%%%%%%%%%%%
%%%%%%%%%%%%%%%%%%%%%%%%%%%%%%%%%%%%%%%%%%%%%%%%%%%%%%%%%%%%%%%%%%%%%%%%%%%%%%%%

%%%%%%%%%%%%%%%%%%%%%%%%%%%%%%%%%%%%%%%%%%%%%%%%%%%
\section{Proofs of Section~\ref{sec:prox}}

%%%%%%%%%%%%
\subsection{Proof of Lemma~\ref{lem:rel-prox-HV-to-H}} \label{appendix:lem:rel-prox-HV-to-H}
\begin{proof}
  Let $p=\prox^V_\f(x)$. The statement follows from the following equivalences
  \begin{eqnarray*}
      p = \prox_\f^V(x) 
      & \iff & x \in p + V^{-1}  \partial \f (p) \\
      & \iff & V^{1/2} x \in V^{1/2} p + V^{-1/2}\circ \partial \f \circ V^{-1/2} (V^{1/2} p) \\
      & \iff & V^{1/2} p = \prox_{\f\circ V^{-1/2}} (V^{1/2} x)\,.
  \end{eqnarray*}
\end{proof}

%%%%%%%%%%%%
\subsection{Proof of Lemma \ref{lem:moreauidentV}} \label{appendix:lem:moreauidentV}
\begin{proof}
We have
\begin{eqnarray*}
p = \prox^{V}_{\rho \f^*}(x) = (\Id + V^{-1} \rho\partial \f^*)^{-1}(x) 
& \iff & V(x-p) \in \partial (\rho \f^*)(p)\\
& \iff & p \in \partial \f(V(x - p)/\rho)\\
& \iff & Vx/\rho - (Vx - Vp)/\rho \in V\partial (\f/\rho)(V(x - p)/\rho)\\
& \iff & V(x - p)/\rho = (\Id+V \partial (\f/\rho))^{-1}(Vx) \\
& \iff & x = p + \rho V^{-1} \circ (\Id+V \partial (\f/\rho))^{-1}(Vx) ~.
\end{eqnarray*}
\end{proof}

%%%%%%%%%%%%
\subsection{Proof of Theorem~\ref{theo:proxVrankr}} \label{appendix:theo:proxVrankr}
\begin{proof}
Let $p=\prox^V_{\f}(x)$. Then, we have to solve
\begin{align}
  & \min_{z} \frac{1}{2} \norm{x - z}_V^2 + \f(z) \nonumber \\
  & \iff \min_{z} \parenth{\frac{1}{2}\norm{z}_\matP^2 - \pds{x}{z}_\matP + \f(z)} 
              \pm \frac{1}{2}\pds{x - z}{\matQ(x - z)} \nonumber \\
  \smat{y = \matP^{1/2}z \\ \matW=\matP^{-1/2}\matQ\matP^{-1/2}}  
  & \iff \min_{y} \parenth{\frac{1}{2}\norm{y}^2 - \pds{\matP^{1/2}x}{y} + \f\circ \matP^{-1/2}(y)} 
              \pm \frac{1}{2}\pds{\matP^{1/2}x - y}{\matW(\matP^{1/2}x - y)} \\
  \smat{\text{Lemma~\ref{lem:fencheldual}\eqref{eq:fencheldual1}}\\
        \text{or Lemma~\ref{lem:tolanddual}\eqref{eq:tolanddual1}}}  
  & \iff \min_{w} \pm \parenth{\frac{1}{2}\norm{\cdot}^2 - \pds{\matP^{1/2}x}{\cdot} + \f\circ \matP^{-1/2}}^*(\mp w) 
                + \parenth{\frac{1}{2}\pds{\matP^{1/2}x - \cdot}{\matW (\matP^{1/2}x - \cdot)}}^*(w) \nonumber\\
  \smat{\text{Lemma~\ref{lem:conjquad}}\\
        \text{and Lemma~\ref{lem:conjcalc}\ref{conjtrans}}} 
  & \iff \min_{w \in \Span(\matW)} \pm \parenth{\frac{1}{2}\norm{\cdot}^2 - \pds{\matP^{1/2}x}{\cdot} + \f\circ \matP^{-1/2}}^*(\mp w) 
          + \frac 12 \norm{w}^2_{\matW^+} + \pds{\matP^{1/2}x}{w} \nonumber\\
  \smat{\text{Lemma~\ref{lem:conjcalc}\ref{conjsum}-\ref{conjlin}}} 
  & \iff \min_{w \in \Span(\matW)} \pm \parenth{\parenth{\frac{1}{2}\norm{\cdot}^2 - \pds{\matP^{1/2}x}{\cdot}}^* \infc (\f^* \circ \matP^{1/2})}(\mp w) 
          + \frac 12 \norm{w}^2_{\matW^+} + \pds{\matP^{1/2}x}{w} \nonumber\\
  & \iff \min_{w \in \Span(\matW)} \pm \parenth{\parenth{\frac{1}{2}\norm{\matP^{1/2}x+\cdot}^2}  \infc (\f^* \circ \matP^{1/2})}(\mp w) 
          + \frac 12 \norm{w}^2_{\matW^+} + \pds{\matP^{1/2}x}{w} \nonumber\\
  \smat{\text{Definition~\ref{def:env}}}
  & \iff \min_{w \in \Span(\matW)} \pm \env{{\parenth{\f^*\circ \matP^{1/2}}}}{1}(\matP^{1/2}x \mp w) 
          + \frac 12 \norm{w}^2_{\matW^+} + \pds{\matP^{1/2}x}{w} ~.
\label{eq:mindual-rk-r}
\end{align}
By virtue of Lemma~\ref{lem:envlip}, $\env{{\parenth{\f^*\circ \matP^{1/2}}}}{{1}}$ is continuously differentiable with 1-Lipschitz gradient. Together with Lemma~\ref{lem:subdiffgrad}, Lemma~\ref{lem:fencheldual}\eqref{eq:fencheldual3} or Lemma~\ref{lem:tolanddual}\eqref{eq:tolanddual3}\footnote{The coercivity assumption holds (in fact the primal has exactly one solution) and the dual problem has indeed a non-empty set of minimizers.}, and Lemma~\ref{lem:moreauident}, this yields
\begin{eqnarray*}
p = \matP^{-1/2} \circ \nabla \env{{\parenth{\f^*\circ \matP^{1/2}}}}{{1}}(\matP^{1/2}x \mp w^\star)
&=& \matP^{-1/2} \circ \parenth{\Id - \prox_{\f^*\circ \matP^{1/2}}}(\matP^{1/2}x \mp w^\star)\\
&=& \matP^{-1/2} \circ \prox_{\f\circ \matP^{-1/2}}\circ \matP^{1/2}(x \mp \matP^{-1/2}w^\star),
\end{eqnarray*}
where $w^\star$ is a solution to the dual problem \eqref{eq:mindual-rk-r}, which will turn out to be unique as we will show shortly. Problem \eqref{eq:mindual-rk-r} is a minimization problem of a proper continuously differentiable objective with a Lipschitz continuous gradient over a linear set. The linear set can be parametrized by $\alpha\in\R^r$ such that $w=P^{-1/2}U \alpha$, and minimizing \eqref{eq:mindual-rk-r} is then equivalent to solving the $r$-dimensional smooth optimization problem
\begin{eqnarray}
\label{eq:minalpha-rank-r}
\min_{\alpha \in \R^r} \pm \env{{\parenth{\f^*\circ \matP^{1/2}}}}{{1}}(\matP^{1/2}x \mp \matP^{-1/2} U \alpha) + \frac 12 \norm{\alpha}^2_{U^\top\matQ^+U} + \pds{U^\top x}{\alpha} ~.
\end{eqnarray}
\JF{Since the columns of $U$ are linearly independent, $U^\top\matQ^+U$ is nothing but the identity operator on $\RR^r$.} The gradient of the objective in \eqref{eq:minalpha-rank-r} is given by the mapping $\lfun$. Lipschitz continuity of $\lfun$ follows from non-expansiveness of the proximal mapping, and the Lipschitz constant is straightforward from the triangle and Cauchy--Schwartz inequality. The root $\alpha^\star$ of $\lfun$ is unique if $\lfun$ is strongly monotone. In the case $V=P+Q$, strong monotonicity is immediate since all terms in \eqref{eq:minalpha-rank-r} are convex, and $\norm{\alpha}^2_{U^\top\matQ^+U}$ is strongly convex \JF{of modulus 1}.

In case $V=P-Q$, we apply Moreau's identity ($-\env{(\varphi^*)}{1}(x)=\env{\varphi}{1}(x) -\frac 12\norm{x}^2$ for $\varphi\in\Gamma_0(\Hm)$) (see, for example, \cite[Lemma 2.10]{CW05}) to the first term, which reduces the analysis of strong convexity to that of $\pds{\alpha}{(U^\top (Q^{+} - P^{-1})U)\alpha}$, hence, the positive definiteness of $U^\top (Q^{+} - P^{-1})U$. Since $P-Q\in\sdp(N)$, we have $\opnorm{P^{-1/2}QP^{-1/2}}<1$ and $P^{-1/2}QP^{-1/2}$ is invertible on $\Span(Q)$. Therefore, using $1= \opnorm{AA^{-1}}\leq\opnorm{A}\opnorm{A^{-1}}$ for an invertible matrix $A$, we conclude that $\opnorm{P^{1/2}Q^+P^{1/2}}_{\Span(Q)}>1$, where $\opnorm{\cdot}_{\Span(Q)}$ denotes the operator norm restricted to $\Span(Q)$, which implies that $Q^{+} - P^{-1} \in \sdp(\Span(Q))$ and, thus, \eqref{eq:minalpha-rank-r} is strongly convex. \JF{Its modulus of strong convexity is $\opnorm{U^\top (Q^{+} - P^{-1})U} = 1 - \opnorm{U^\top P^{-1}U}=1-\opnorm{P^{-1/2}U}^2$.}
\end{proof}

%%%%%%%%%%%%
\subsection{Proof of Proposition~\ref{prop:bound}}
\label{appendix:prop:bound}
\SB{
\begin{proof}
    We use the notation of Theorem~\ref{theo:proxVrankr} and its proof in Appendix~\ref{appendix:theo:proxVrankr}.
    Let $p=\prox^V_{\f}(x)$.
    By non-expansivity of the proximal mapping, $\norm{p} = \norm{\prox^V_{\f}(x)}
    \le \norm{x} + \norm{\prox^V_{\f}(0)}$.
    Since $p$ minimizes $\frac{1}{2} \norm{x - z}_V^2 + \f(z)$, and using the same change of variable $y = \matP^{1/2}z$ as in the proof, the optimal point $y^\star = \matP^{1/2}p$.
    
    Letting $g(y) = \pm \frac{1}{2}\pds{\matP^{1/2}x - y}{\matW(\matP^{1/2}x - y)}$ with $ \matW=\matP^{-1/2}\matQ\matP^{-1/2}$, either Lemma~\ref{lem:fencheldual} or Lemma \ref{lem:tolanddual}
    gives the optimal dual solution
    \begin{align}
    \mp w^\star &=\nabla g(y^\star) \notag\\
    &= W(y^\star-P^{1/2}x) \notag \\
    &= \matP^{-1/2}\matQ\matP^{-1/2}y^\star 
    - \matP^{-1/2}\matQ x \notag \\
    &= \matP^{-1/2}\matQ(p - x ) . \label{eq:wpx}
    \end{align}
    Finally, $w^\star=P^{-1/2}U \alpha^\star$, and observe $U=u$ since $r=1$, and so also $Q=uu^\top$. Then \begin{align*}
    |\alpha^\star| &= \anorm{P^{1/2}w^\star}/\norm{u} \\
    &= \norm{\matQ(p - x )}/\norm{u} \quad\text{via~\eqref{eq:wpx}} \\
    &\le \norm{u}\parenth{ 2\norm{x} + \norm{\prox^V_{\f}(0)}} .
    \end{align*}
\end{proof}
}

%%%%%%%%%%%%
\subsection{Proof of Proposition~\ref{prop:generaldh-rank-r}} \label{appendix:prop:generaldh-rank-r}
\JF{
\begin{proof}
%This follows from linearity and the chain rule \cite[Lemma~3.5]{GriesseLorenz07}. The second statement is a consequence of strong monotonicity of $\lfun$ as established in Theorem~\ref{theo:proxVrankr}.
%\JF{Should we argue by Alexandrov theorem, or other first-order continuity property of the proximity operator.}
The key of the proof is the remarkable stability properties of definable functions. In particular, under the sum, composition by a linear operator, derivation, and canonical projection (see~\cite{vandenDriesMiller96,coste1999omin}). Since $\f$ is a tame function, so is $\f \circ P^{-1/2}$, as well as its Moreau envelope (by the projection stability), and the gradient of the latter. Combining this with Lemma~\ref{lem:envlip}, it follows that $\prox_{\f \circ P^{-1/2}}$ is a tame mapping. We then deduce from stability to the sum and composition by a linear operator that $\lfun$ is a tame mapping. Thus, $\lfun$ is tame Lipschitz continuous mapping (Theorem~\ref{theo:proxVrankr}), and it follows from \cite[Theorem~1]{BolteSN09} that $\lfun$ is semi-smooth.

Let us now show that $\partial^C \lfun(\alpha^\star)$ is non-singular. By definition of the Clarke Jacobian for a Lipschitz function and the Carath\'eodory theorem, for any $G \in \partial^C \lfun(\alpha^\star)$, we have a finite sequence $\rho_1,\cdots, \rho_{r^2+1} \geq 0$ living on the simplex, i.e., $\sum_{i=1}^{r^2+1} \rho_i=1$, and $r^2+1$ sequences $\parenth{\alpha_{i,k}}_{k \in \NN}$ with $\alpha_{i,k} \underset{\Omega}{\to} \alpha^\star$ as $k \to +\infty$ such that, for any $d \in \RR^r$
\[
\pds{Gd}{d} = \sum_{i=1}^{r^2+1} \rho_i \lim_{k \to +\infty} \pds{\jacl(\alpha_{i,k})d}{d} = \sum_{i=1}^{r^2+1} \rho_i \lim_{k \to +\infty} \lim_{\tau \to 0} \frac{\pds{\lfun(\alpha_{i,k}+\tau d) - \lfun(\alpha_{i,k})}{d}}{\tau} .
\]
By strong monotonicity of $\lfun$ of modulus $c > 0$ (Theorem~\ref{theo:proxVrankr}), we have for all $d \in \RR^r$
\[
\frac{\pds{\lfun(\alpha_{i,k}+\tau d) - \lfun(\alpha_{i,k})}{d}}{\tau} \geq c \norm{d}^2 .
\]
Passing to the limit and summing, we conclude that
\[
\pds{Gd}{d} \geq c \norm{d}^2, \quad \forall d \in \RR^r .
\]
Since $G$ is any element of $\partial^C \lfun(\alpha^\star)$, we get that $\partial^C \lfun(\alpha^\star)$ is non-singular. We are then in position to apply \cite[Theorem~7.5.5]{FacchineiPang03} to obtain the first part of the convergence claim.

For the case where $\f$ is semi-algebraic, we argue as above, using stability of semi-algebraic sets to the same operations (in particular projection stability by the Tarski-Seidenberg principle~\cite{coste2002intro}), to deduce that $\prox_{\f \circ P^{-1/2}}$ is also a semi-algebraic mapping. The last claim then follows from~\cite[Theorem~2]{BolteSN09}.
\end{proof}
}

%%%%%%%%%%%%
\subsection{Proof of Proposition~\ref{prop:proxVseprank1}} \label{appendix:prop:proxVseprank1}
\begin{proof}
  Recall that \eqref{eq:roothsep} is strictly increasing, continuous, and has a unique solution. When $\prox_{\f_i/d_i}$ is piecewise affine with $k_i$ segments, it is easy to see that $\lfun(\alpha)$ in \eqref{eq:roothsep} is also piecewise affine with slopes and intercepts changing at the $k^\prime$ (unique) transition points $\widetilde{\bm\theta}$. Therefore, the root of $\lfun$ can be found by sorting $\widetilde{\bm\theta}$ (Step~\ref{alg:root-pl-sep-sort-bpts}) and finding the interval between breakpoints that localizes the root (Step~\ref{alg:root-pl-sep-step-rootfinding}). Step~\ref{alg:root-pl-sep-sort-bpts} has the complexity $O(K \log(K))$. Step~\ref{alg:root-pl-sep-step-rootfinding} has the complexity $O(N\log(K))$, where $O(\log(K))$ steps are required for binary search and each step costs the evaluation of $\lfun$, which consists of $N$ terms. Step~\ref{alg:root-pl-sep-aff-accum} adds at most a complexity of $O(N)$. 
\end{proof}

%%%%%%%%%%%%%%%%%%%%%%%%%%%%%%%%%%%%%%%%%%%%%%%%%%%%%%%%%%%%%%%%%%%%%%%%%%%%%%%%
%% New Section %%%%%%%%%%%%%%%%%%%%%%%%%%%%%%%%%%%%%%%%%%%%%%%%%%%%%%%%%%%%%%%%%
%%%%%%%%%%%%%%%%%%%%%%%%%%%%%%%%%%%%%%%%%%%%%%%%%%%%%%%%%%%%%%%%%%%%%%%%%%%%%%%%

%%%%%%%%%%%%%%%%%%%%%%%%%%%%%%%%%%%%%%%%%%%%%%%%%%%
\section{Proofs of Section~\ref{sec:SR1}}

%%%%%%%%%%%
\subsection{Proof of Lemma~\ref{lem:hkbnds}} \label{appendix:lem:hkbnds}
\begin{proof}
From \cite[p. 57 and 64]{NesterovBook}, we have for any $x$ and $y$ in $\dom(F)$
\begin{align*}
    L^{-1} \| \nabla f(x) - \nabla f(y) \|^2 &\le \< \nabla f(x) - \nabla f(y), x-y \> 
        \le L \|x-y\|^2 \\
        \mu \|x-y\|^2  &\le  \< \nabla f(x) - \nabla f(y), x-y \> 
        \le \mu^{-1}  \| \nabla f(x) - \nabla f(y) \|^2.
%    \label{eq:Nesterov}
\end{align*}
Thus by applying the above results to the quasi-Newton sequences $s_k$ and $y_k$, we get 
\begin{equation}
\begin{aligned}
    L^{-1} \| y_k \|^2 &\le \< s_k, y_k \> \le L \|s_k\|^2  \\
    \mu \|s_k\|^2  &\le  \< s_k, y_k \> \le \mu^{-1}  \| y_k \|^2
\end{aligned}    
    \qquad \implies \qquad
\begin{aligned}
    L^{-1}  &\le \frac{\< s_k, y_k \>}{\|y_k\|^2} \le \mu^{-1} \\
    \mu   &\le  \frac{\< s_k, y_k \>}{\|s_k\|^2} \le L.
\end{aligned}
    \label{eq:Lipschitz}
\end{equation}
%which we can re-arrange to
%\begin{align}
    %L^{-1}  &\le \frac{\< s_k, y_k \>}{\|y_k\|^2}
        %\le \mu^{-1} \nonumber  \\
        %\mu   &\le  \frac{\< s_k, y_k \>}{\|s_k\|^2}
        %\le L.
    %\label{eq:Lipschitz2}
%\end{align}
We will use the ``2nd'' Barzilai--Borwein stepsize $\BBa$ as opposed to the more common $\BBb$:
\begin{equation*}
    \BBa=\frac{\pds{s_k}{y_k}}{\|y_k\|^2}, \quad \BBb = \frac{\|s_k\|^2}{\pds{s_k}{y_k}}.
%    \label{eq:BB}
\end{equation*}
Via Cauchy-Schwarz, we have $\BBa \le \BBb$. From \eqref{eq:Lipschitz}, we have $L^{-1} \le \BBa \le \BBb \le \mu^{-1}$.  

Given the SR1 update and the choice $H_0 = \gamma \BBa \Id$ with $0 < \gamma < 1$, we have
\[
u_k = (s_k - \gamma \BBa y_k )/\sqrt{ \pds{s_k - \gamma \BBa y_k}{y_k} } =  (s_k - \gamma \BBa y_k )/\sqrt{ (1 - \gamma) \pds{s_k}{y_k} } .
\]
Combining this with the estimates \eqref{eq:Lipschitz}, we obtain
\begin{align*}
    \|u_k\|^2 &= \frac{ \|s_k\|^2 -2\gamma\BBa\pds{s_k}{y_k} + \gamma^2\BBa^2\|y_k\|^2 }{ (1-\gamma)\pds{s_k}{y_k} } \\
    &= (1-\gamma)^{-1}\left(  \frac{\|s_k\|^2}{\pds{s_k}{y_k}}
                            -2\gamma \BBa  + \gamma^2 \BBa \right) \\
    &\le (1-\gamma)^{-1}\left( \mu^{-1} - 2\gamma L^{-1} + \gamma^2 \mu^{-1} \right) ~.
%    \label{eq:normU}
\end{align*}
Thus
\begin{align*} 
0\prec \gamma L^{-1}\Id \preceq H_0 \preceq H_k &\preceq \gamma\mu^{-1}\Id + (1-\gamma)^{-1}\left( (1+\gamma^2)\mu^{-1} - 2\gamma L^{-1} \right)\Id \nonumber \\
&\preceq (1-\gamma)^{-1}\left( (1+\gamma) \mu^{-1} - 2\gamma L^{-1}  \right) \Id.
\end{align*}
\end{proof}

%%%%%%%%%%%%
\subsection{Proof of Theorem~\ref{thm:convergence}} \label{appendix:thm:convergence}
\begin{proof}
We first recall the classical inequality for smooth functions with $L$-Lipschitz continuous gradient,
\begin{equation}
\label{eq:lipf}
f(x) - f(y) + \pds{\nabla f(y)}{y - x} \leq \frac{L}{2}\norm{x - y}^2 ~. 
\end{equation}
\paragraph*{$\bullet$ Case $\alpha \in ]0,1/2[$:}
It is clear that \eqref{eq:fbsr1} is equivalent to
\[
B_k(\xk - \xkk) - \stk \nabla f(\xk) \in \stk \partial h(\xk)
\]
which in turn implies
\begin{equation}
\label{eq:convineq}
h(y) \geq h(\xkk) + \stk^{-1} \pds{B_k(\xk - \xkk) - \stk \nabla f(\xk)}{y - \xkk}, \quad \forall y \in \dom(h) ~.  
\end{equation}
Applied at $\xk$, it yields
\begin{align}
\label{eq:dkl2}
h(\xk) - h(\xkk) + \pds{\nabla f(\xk)}{\xk - \xkk}	&\geq \stk^{-1} \norm{\xkk - \xk}_{B_k}^2 ~.
%												&\geq (b\stk)^{-1} \norm{\xkk - \xk}^2
\end{align}
Denote $\dk = h(\xk) - h(\xkk) + \pds{\nabla f(\xk)}{\xk - \xkk}$. We have $\dk \geq 0$. In view of \eqref{eq:lipf}, we get
\begin{align*}
F(\xkk) - F(\xk) + \dk 	&= f(\xkk) - f(\xk) + \pds{\nabla f(\xk)}{\xk - \xkk} \\
						&\leq \frac{L}{2} \norm{\xkk - \xk}^2 \leq \frac{Lb}{2} \norm{\xkk - \xk}_{B_k}^2 ~,
\end{align*}
where we used Lemma~\ref{lem:hkbnds}. The last inequality together with \eqref{eq:dkl2} yields
\begin{align*}
F(\xkk) - F(\xk) &\leq -\parenth{1-\frac{Lb\stk}{2}} \dk \leq -\alpha \dk ~.
\end{align*}
By assumption, the right hand side is non-positive, meaning that the objective function decreases with $k$. Denote
\[
\ek = F(\xk) - F(\xs) \quad \text{and} \quad \delk = \ek - \ekk ~.
\]
Observe that $\ek$ is a positive and decreasing sequence, and thus converges. Moreover,
\[
\delk \geq \alpha \dk ~,
\]
Using convexity of $f$ and inequality \eqref{eq:convineq} at $y=\xs$, we obtain
\begin{align*}
\ek	&=   f(\xk) - f(\xs) + \pds{\nabla f(\xk)}{\xs - \xk} \\
	&~ + h(\xk) - h(\xkk) + \pds{\nabla f(\xk)}{\xk - \xkk} \\
	&~ + h(\xkk) - h(\xs) + \pds{\nabla f(\xk)}{\xkk - \xs} \\
	&\leq \dk + \stk^{-1} \pds{B_k(\xk - \xkk)}{\xkk - \xs} \\
	&\leq \dk + \stk^{-1} \norm{\xkk - \xs}_{B_k}\norm{\xkk - \xk}_{B_k} \\
	&\leq \dk + \sqrt{\frac{1}{\underline{\step} a}} \norm{\xkk - \xs}\sqrt{\dk} \\
	&\leq \alpha^{-1}\parenth{\delk + \sqrt{\frac{\alpha}{\underline{\step} a}} \norm{\xkk - \xs}\sqrt{\delk}} ~.
\end{align*}
Thus, using Young inequality, together with strong convexity of $f$ and $\ek$ is decreasing, we get for any $\veps > 0$,
\begin{align*}
\alpha\ek	&\leq \delk + \frac{\alpha\veps}{2\underline{\step} a} \norm{\xkk - \xs}^2 + \frac{\delk}{2\veps} \\
			&\leq  \parenth{1+\frac{1}{2\veps}}\delk + \frac{\alpha\veps}{\underline{\step} a\mu} \ekk \\
			&\leq  \parenth{1+\frac{1}{2\veps}}\delk + \frac{\alpha\veps}{\underline{\step} a\mu} \ek
			= \parenth{1+\frac{1}{2\veps}}\delk + \frac{\veps L \alpha}{\gamma\underline{\step}\mu} \ek ~.
\end{align*}
Let $\beta(\veps) = 1+\tfrac{1}{2\veps}$ and $\cnd=L/\mu > 1$. It follows that
\begin{align*}
\ekk 	&\leq \rho \ek ~, \qquad \rho = 1 - \frac{\alpha}{\beta(\veps)}\parenth{1 - \frac{\veps\cnd}{\gamma \underline{\step}}} ~.
\end{align*}
We always have $\beta(\veps) \in ]1,+\infty[$, and by assumption on the sequence $\stk$, $\alpha \in ]0,1[$. Choosing $\veps=\nu \gamma \underline{\step}/\cnd$, for any $\nu \in ]0,1[$, we get that $\rho = 1 - \alpha\frac{\nu(1-\nu)}{\nu + \eta} \in ]0,1[$. Therefore,
\[
\norm{\xk - \xs} \leq \sqrt{\frac{2\parenth{F(x_0) -(F\xs)}}{\mu}}\rho^{k/2} ~.
\]
The function $\nu \in ]0,1] \mapsto \nu(1-\nu)/(\nu + \eta)$ has a unique maximizer at $\nu_{\mathrm{opt}}=\sqrt{\eta^2+\eta}-\eta$ (which is indeed a strictly increasing function of $\eta$ on $]0,+\infty[$ taking values in $]0,1/2[$). We get the optimal rate $\rho_1$ by plugging $\nu_{\mathrm{opt}}$ into the expression of $\rho$.

\paragraph*{$\bullet$ Case $\alpha \in [1/2,1[$:}
From \eqref{eq:Q}, \eqref{eq:lipf}, Lemma~\ref{lem:hkbnds} and the assumption on $\alpha$, we have
\begin{align*}
Q_k^{B_k}(x) + h(x) 	&= F(x) - \parenth{f(x) - f(\xk) + \pds{\nabla f(\xk)}{\xk-x}} + \frac{1}{2\stk}\norm{x-\xk}_{B_k}^2 \\
					&\geq F(x) - \frac{L}{2}\norm{x - \xk}^2 + \frac{1}{2\stk}\norm{x-\xk}_{B_k}^2 \\
					&\geq F(x) + \frac{1}{2\stk}\parenth{1 - Lb\stk}\norm{x-\xk}_{B_k}^2 \\
					&\geq F(x) ~.
\end{align*}
Moreover, convexity of $f$ yields
\begin{align*}
Q_k^{B_k}(\xkk) + h(\xkk) 	
				&= \min_{x} Q_k^{B_k}(x) + h(x) \\
				&= \min_{x} F(x) - \parenth{f(x) - f(\xk) - \pds{\nabla f(\xk)}{x-\xk}} + \frac{1}{2\stk}\norm{x-\xk}_{B_k}^2 \\
				&\leq \min_{x} F(x) + \frac{1}{2\stk}\norm{x-\xk}_{B_k}^2 ~.
\end{align*}
It then follows that
\begin{align*}
F(\xkk)	&\leq  Q_k^{B_k}(\xkk) + h(\xkk) \\
		&\leq \min_{x} F(x) + \frac{1}{2\stk}\norm{x-\xk}_{B_k}^2 \\
		&\leq \min_{t \in [0,1]} F(t \xs + (1-t) \xk) + \frac{t^2}{2\stk}\norm{\xk-\xs}_{B_k}^2 \\
		&\leq \min_{t \in [0,1]} tF(\xs) + (1-t) F(\xk) + \frac{t^2L}{2\underline{\step}\gamma}\norm{\xk-\xs}^2 \\
		&\leq \min_{t \in [0,1]} F(\xk) - t\parenth{F(\xk) - F(\xs)} + \frac{t^2L}{\underline{\step}\gamma\mu}\parenth{F(\xk)-F(\xs)} \\
		&= \min_{t \in [0,1]} F(\xk) - t\parenth{1 - 2 t\eta}\parenth{F(\xk)-F(\xs)} ~.
\end{align*}
Thus, we arrive at
\begin{align*}
\ekk		&\leq \min_{t \in [0,1]} \parenth{1 - t\parenth{1 - 2 t\eta}}\ek = \rho_2 \ek ~.
\end{align*}
The function $\parenth{1 - t\parenth{1 - 2 t\eta}}$ attains its minimum uniquely at $t=1$ if $\eta \leq 1/4$, and $1/(4\eta)$ otherwise. Plugging these values gives the expression of $\rho_2$.
\end{proof}

%%%%%%%%%%%%%%%%%%%%%%%%%%%%%%%%%%%%%%%%%%%%%%%%%%%
\section{Proofs of Section~\ref{sec:BFGS}}

%%%%%%%%%%%
\subsection{Proof of Lemma~\ref{lem:hkbnds-bfgs}} \label{appendix:lem:hkbnds-bfgs}

\begin{proof}
  We derive a uniform bound for the matrix $H$ in \eqref{eq:BFGS-inv-Hess-formula-derivation}. Note that $u_\gamma$ from \eqref{eq:BFGS-inv-Hess-formula-derivation} satisfies with $\tau=\BBa$
  \[
    \begin{split}
    \rho \norm{u_\gamma}^2 
    =&\  \frac{\norm{s}^2 - 2\frac{\gamma\tau}{1+\gamma}\scal{y}{s} + \big(\frac{\gamma\tau}{1+\gamma}\big)^2 \norm{y}^2}{\scal{y}{s}} 
    =  \frac{\norm{s}^2}{\scal{y}{s}} - 2\frac{\gamma}{1+\gamma}\BBa + \frac{\gamma^2}{(1+\gamma)^2}  \BBa \\
    =&\  \BBb - \frac{\gamma}{1+\gamma}\Big(2 - \frac{\gamma}{1+\gamma}\Big) \BBa 
    \leq  \mu^{-1} - \frac{(2+\gamma)\gamma}{(1+\gamma)^2} L^{-1}
    \end{split}
  \]
  where we used $\rho^{-1} = \scal{y}{s}$, $\rho\norm{y}^2 = \BBa^{-1}$, and the estimations in the proof of Lemma~\ref{lem:hkbnds} for $\BBa$ and $\BBb$. Using this estimation, $\rho\BBa^2\norm{y}^2 = \BBa\geq L^{-1}$, and positive semi-definiteness of $yy^\top$ and $u_\gamma u_\gamma^\top$, we conclude that
  \[
    \begin{split}
    0
    \prec \frac{\gamma}{1+\gamma}L^{-1} \Id 
    = \gamma \Big(\BBa  - \frac{\gamma^2}{1+\gamma}\BBa\Big) \Id 
    \preceq H 
    \preceq&\ \BBa \gamma \Id + (1+\gamma) \mu^{-1} \Id - \frac{(2+\gamma)\gamma}{1+\gamma} L^{-1} \Id \\
    \preceq&\  (1+2\gamma) \mu^{-1} \Id  - \frac{(2+\gamma)\gamma}{1+\gamma} L^{-1} \Id .
    \end{split}
  \]
\end{proof}

%%%%%%%%%%%%%
%\subsection{Proof of Theorem~\ref{thm:convergence-bfgs}} \label{appendix:thm:convergence-bfgs}

%\pdfbookmark[0]{Acknowledgments}{ack} % add a nice PDF bookmark
%\subsubsection*{Acknowledgments}
%{\small 
%SB would like to acknowledge the Fondation Sciences Math\'{e}matiques de Paris for his fellowship.
%}

%%%%%%%%%%%%%%%%%%%%%%%%%%%%%%%%%%%%%%%%%%%%%%%%%%%%%%%%%%%%%%%%%%%%%%%%%%%%%%%%
%% References %%%%%%%%%%%%%%%%%%%%%%%%%%%%%%%%%%%%%%%%%%%%%%%%%%%%%%%%%%%%%%%%%%
%%%%%%%%%%%%%%%%%%%%%%%%%%%%%%%%%%%%%%%%%%%%%%%%%%%%%%%%%%%%%%%%%%%%%%%%%%%%%%%%

%\subsubsection*{References}
%\pdfbookmark[0]{References}{references} % add a nice PDF bookmark
\bibliographystyle{siam}

\small{
\bibliography{highordersplitting,auxiliaryRefs}
}

\end{document}